\newtheorem{thm}{Theorem}[section]
\newtheorem{theorem}[thm]{Theorem}
\newtheorem{mainthm}{Theorem}
\renewcommand{\themainthm}{\Alph{mainthm}}
\newtheorem{cor}[thm]{Corollary}
\newtheorem{corollary}[thm]{Corollary}
\newtheorem{lem}[thm]{Lemma}
\newtheorem{lemma}[thm]{Lemma}
\newtheorem{prop}[thm]{Proposition}
\newtheorem{proposition}[thm]{Proposition}
\theoremstyle{definition}
\newtheorem{defn}[thm]{Definition}
\newtheorem{example}[thm]{Example}
\newtheorem{definition}[thm]{Definition}
\newtheorem{rem}[thm]{Remark}  
\newtheorem{remark}[thm]{Remark}
\numberwithin{equation}{section}
\newcommand{\Q}{\Gamma}
\newcommand{\C}{\mathbb{C}}
\newcommand{\CC}{\mathbb{C}}
\newcommand{\OO}{\mathcal{O}}
\newcommand{\Z}{\mathbb{Z}}
\newcommand{\R}{\mathbb{R}}
\newcommand{\PP}{\mathbf{P}}
\newcommand{\V}{\mathcal{V}}
\newcommand{\nV}{\mathcal{V}_{\bullet}}
\newcommand{\Qbar}{\overline{Q}}
\newcommand{\vv}{\upsilon}
\newcommand{\uu}{\tilde{u}}
\newcommand{\Div}{\operatorname{Div}}
\newcommand{\CDiv}{\operatorname{CDiv}}
\newcommand{\Pic}{\operatorname{Pic}}
\newcommand{\conv}{\operatorname{Conv}}
\newcommand{\wt}{\operatorname{wt}}
\newcommand{\Arr}{\operatorname{Arr}}
\newcommand{\rect}{\operatorname{rect}}
\newcommand{\aff}{\operatorname{aff}}
\newcommand{\inn}{\operatorname{in}}
\newcommand{\out}{\operatorname{out}}
\newcommand{\F}{\mathcal{F}}
\newcommand{\FF}{\mathcal{FF}}
\newcommand{\Fl}{\operatorname{Fl}}
\newcommand{\NF}{\mathcal{N}}
\newcommand{\Newt}{\operatorname{Newt}}
\newcommand{\Trop}{\operatorname{Trop}}
\newcommand{\NP}{\operatorname{Root}}
\newcommand{\NNP}{\widetilde{\operatorname{Root}}}
\newcommand{\face}{\operatorname{face}}
\newcommand{\argmin}{\operatorname{argmin}}
\newcommand{\rank}{\operatorname{rank}}
\newcommand{\cone}{\operatorname{Cone}}
\newcommand{\trop}{\operatorname{trop}}
\def\x(#1){sin(#1)^3}
\def\y(#1){(13*cos(#1)-5*cos(2*#1)-2*cos(3*#1)-cos(4*#1))/16}
\begin{document}

\title[Root, flow, and order polytopes]{Root, flow, and order polytopes, with connections to toric geometry}

\author{K. Rietsch}
\address{Department of Mathematics,
            King's College London,
            Strand, London
            WC2R 2LS
            UK
}
\email{konstanze.rietsch@kcl.ac.uk}%
\author{L. Williams}%
\address{Department of Mathematics,
            Harvard University,
            Cambridge, MA
	    USA
}
\email{williams@math.harvard.edu}
%


\keywords{reflexive polytopes, quivers, posets, toric varieties}

\subjclass[2020]{52B20,14M25 (Primary); 06A07,05C20,14J33 (Secondary)}

\begin{abstract}
In this paper we study
the class of polytopes which can be obtained by 
taking the convex hull of some 
subset of the points $\{e_i-e_j \ \vert \ i \neq j\} 
\cup \{\pm e_i\}$ in $\R^n$, where $e_1,\dots,e_n$ is the standard
basis of $\R^n$.  
Such a polytope can be encoded by a 
quiver $Q$ with vertices $V \subseteq \{\vv_1,\dots,\vv_n\} \cup \{\star\}$,
where each edge $\vv_j\to \vv_i$ or $\star \to \vv_i$ or $\vv_i\to \star$ gives
rise to the point $e_i-e_j$ or $e_i$ or $-e_i$, respectively; 
we denote the corresponding polytope as $\NP(Q)$.
These polytopes have been studied extensively
under names such as \emph{edge polytope} and \emph{root polytope}.
We show that if the quiver $Q$ is strongly-connected
then the root polytope $\NP(Q)$ 
is \emph{reflexive} and \emph{terminal};
we moreover give
a combinatorial description of the facets of $\NP(Q)$.
We also show that if $Q$ is planar,
then  $\NP(Q)$ is (integrally 
equivalent to) the polar dual of the 
\emph{flow polytope} of the planar dual quiver $Q^{\vee}$.  
Finally we 
consider the case that $Q$ comes from the Hasse diagram of  a finite ranked poset $P$,
and show in this case that $\NP(Q)$ is polar dual to (a translation of) a 
\emph{marked
order polytope}. 
We then go on to study the toric variety $Y(\F_Q)$ associated to the face fan $\F_Q$ of $\NP(Q)$. If $Q$ comes from a ranked poset $P$  we give a combinatorial description of the Picard group of $Y(\F_Q)$, in terms of a new \emph{canonical ranked extension} of $P$, and we show that $Y(\F_Q)$ is a small partial desingularisation of the Hibi projective toric variety $Y_{\OO(P)}$ of the \emph{order polytope} $\OO(P)$. We show that $Y(\F_Q)$ has a small crepant toric resolution of singularities $Y(\widehat\F_Q)$, and as a consequence that the Hibi toric variety $Y_{\OO(P)}$ has a small resolution of singularities for any ranked poset $P$. 
These results have applications to mirror symmetry \cite{RW2}.
\end{abstract}


\maketitle

\setcounter{tocdepth}{1}
\tableofcontents

\section{Introduction}
We define a \emph{root polytope} to be the 
convex hull of some subset of 
the points $\{e_i-e_j \ \vert \ i \neq j\} 
\cup \{\pm e_i\}$ in $\R^n$, 
where $e_1,\dots,e_n$ is the standard
basis of $\R^n$.  
Root polytopes and their variants 
have been studied extensively in the literature, under different names.
In 2002, Ohsugi and Hibi \cite{Ohsugi} introduced the \emph{edge polytope} 
$\mathcal{P}_G$ of a 
directed graph $G$ on vertices $\vv_1,\dots,\vv_n$, defined as the convex hull
of the vertices $\{e_i-e_j\ \vert \ \vv_i \to \vv_j \text{ an edge  in }G\}$; 
they studied which orientations of the complete graph give rise to a 
Gorenstein Ehrhart ring $A(\mathcal{P}_G)$. 
There was subsequent work 
studying the roots of Ehrhart polynomials of $\mathcal{P}_G$ \cite{Matsui},
and 
 when
a directed graph yields a smooth Fano polytope $\mathcal{P}_G$ \cite{Higashitani}.
In 2009, Postnikov \cite{Postnikov2} studied 
the edge polytope
of a directed graph
$G$ on vertices $\vv_1,\dots,\vv_n$ in which
 edges can only connect $\vv_i \to \vv_j$ if $i<j$;
he called it the 
\emph{root polytope} because its vertices $e_i-e_j$ can be identified with positive roots in the type $A_{n-1}$ root system. He studied the volume
and triangulations of 
the root polytopes associated to bipartite graphs.
There has been much subsequent work on these polytopes, connecting 
them to subdivision algebras \cite{Meszarosroot},
subword complexes \cite{Escobar}, and R-systems \cite{GP}, 
studying them
in other types \cite{Ardilaroot},
and computing their faces \cite{Setiabrata} 
and 
$h^*$-vectors \cite{Kalman}.

In this paper we adopt the term ``root polytope'' for the slightly broader
class of polytopes whose vertices can be any subset of the points
$\{e_i-e_j \ \vert \ i \neq j\} 
\cup \{\pm e_i\}$ in $\R^n$.  
Our motivation for this work is mirror symmetry, more specifically, the study
of certain reflexive polytopes which come from the \emph{starred quiver}
encoding a Laurent polynomial superpotential for 
Schubert varieties \cite{RW2}.  
However, the beautiful 
properties of these reflexive polytopes hold in a very general setting, 
so we decided to present this material in a self-contained paper, which can 
be read independently of \cite{RW2}.  Some of our results are closely related
to results that have appeared before; we have done our best to include relevant citations
where appropriate.
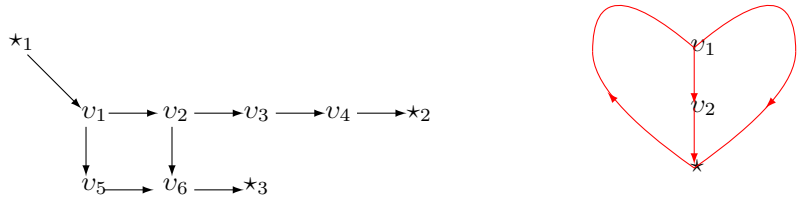
\begin{figure}[h]
\setlength{\unitlength}{1.2mm}
	\begin{picture}(50,14)
         \put(0,14){$\star_1$}
	 \put(8,6){$\vv_1$}
         \put(17,6){$\vv_2$}
          \put(26,6){$\vv_3$}
         \put(35,6){$\vv_4$}
         \put(44,6){$\star_2$}
         \put(8,-2){$\vv_5$}
         \put(17,-2){$\vv_6$}
	 \put(26,-2){$\star_3$}
         \put(2,13){{\vector(1,-1){6}}}
         \put(11,6.5){{\vector(1,0){5.5}}}
         \put(20.5, 6.5){{\vector(1,0){5.5}}}
         \put(10.5,-2){{\vector(1,0){5.5}}}
         \put(8.5,5){{\vector(0,-1){5.5}}}
         \put(18,5){{\vector(0,-1){5.5}}}
         \put(20.5,-2){{\vector(1,0){5.5}}}
         \put(29.5, 6.5){{\vector(1,0){5.5}}}
         \put(38.5, 6.5){{\vector(1,0){5.5}}}
 \end{picture}
	\hspace{1.5cm} \setlength{\unitlength}{1mm}
	\begin{picture}(25,25)(-12,0)
	 \put(0,16){$\vv_1$}
         \put(0,8){$\vv_2$}
	 \put(0,0){$\star$}
	\color{red}
		\qbezier(0.5,16.5)(14,28)(14,16)
	 \qbezier(14,16)(14,10)(0.5,0.5)
	 \qbezier(0.5,16.5)(-13,28)(-13,16)
	 \qbezier(-13,16)(-13,10)(0.5,0.5)
	 \put(-10,9.5){{\vector(-2,2.2){1}}}
	 \put(11,9.5){{\vector(-2,-2){1}}}
	 \put(0.5,16.5){{\vector(0,-1){7.5}}}
	 \put(0.5,8.5){{\vector(0,-1){7.5}}}
		\color{black}
 \end{picture}
 \vspace{15pt} 
\caption{Two starred quivers $Q$ and $Q'$.  
	The root polytope $\NP(Q)$ equals
	$\conv \{e_1, -e_4, -e_6, e_2-e_1, e_5-e_1, e_3-e_2, e_6-e_2, e_4-e_3, e_6-e_5\},$ which 
has $f$-vector $(9,34, 70, 84, 57, 18)$. 
	Meanwhile $\NP(Q')$ equals
	$\conv \{e_2-e_1, -e_2, e_1, -e_1\}$, and is a quadrilateral\label{fig:examplestarredquiver}}
  \end{figure}

\begin{definition}\label{def:starredquiver}
Let $Q$ be a quiver with vertices $\nV \sqcup \V_{\star}$ 
(where $\nV=\{\vv_1,\dots,\vv_n\}$ for $n \geq 1$ and 
$\V_{\star}=\{\star_1,\dots,\star_{\ell}\}$ for $\ell \geq 0$ are called the
	\emph{(normal) vertices} and 
\emph{starred vertices}, respectively),
and arrows $\Arr(Q) \subseteq (\nV \times \nV) \sqcup (\nV \times \V_{\star}) \sqcup (\V_{\star} \times \nV)$.  We will always assume that the underlying graph of any quiver is connected and has no loops. 
If $\ell >0$, we call $Q$ a \emph{starred quiver}.
\end{definition}

If any construction of a quiver results in duplicate arrows we remove these. If it results in an arrow
between two starred vertices, then we identify these vertices into one vertex and remove the arrow

\begin{definition}\label{def:NP}
Let $Q$ be a quiver or starred quiver as in \cref{def:starredquiver}
with arrows $\Arr(Q)$ 
and vertices
$\{\vv_1,\dots,\vv_n\} \cup \{\star_1,\dots,\star_{\ell}\}$.
We associate a point $u_a\in \R^n$ to each arrow $a$ as follows:
\begin{itemize}
	\item if $a: \vv_i \to \vv_j$, $u_a:=e_j-e_i$;
	\item if $a: \star_i \to \vv_j$, $u_a:= e_j$; and 
	\item if $a: \vv_i \to \star_j$, $u_a:= -e_i$.
\end{itemize}
We then define the \emph{root polytope} to be 
$$\NP(Q) = \conv\{u_a \ \vert \ a\in \Arr(Q)\} \subset \R^n,$$
 the convex hull of all the points $u_a$.
\end{definition}
See \cref{fig:examplestarredquiver} for examples.

\begin{remark}\label{rem:one}
If $Q$ has more than one starred vertex, 
we will often identify all starred vertices,
obtaining a related quiver $\Qbar$ with a unique starred vertex.
Clearly $\NP(Q) = \NP(\Qbar)$.  
\end{remark}

\begin{definition}\label{def:starconnected}
We say that a quiver $Q$ is \emph{strongly-connected}
if there is an oriented path from any vertex to any other vertex.
And we 
say that a starred quiver $Q$ is \emph{strongly-connected} 
	if, after identifying all starred vertices, the resulting quiver $\overline{Q}$
	is strongly-connected. 
\end{definition}

\begin{figure}
\begin{tikzpicture}
\draw (0,0) ellipse (7cm and 3cm);
    \draw (-3.3,-.5) ellipse(3cm and 1.3cm);
    \draw (2.5,-.5) ellipse(4cm and 1.8cm);
	\node 
	at (0,3.2) {\blue{$Q$ is strongly-connected}};
	\node at (-3,1) {\blue{\small{$Q$ is plane (with unique $\star$)}}};
	\node at (2.5,1.5) {\blue{\small{$Q$ from ranked poset $P$}}};
	\node[text width=8cm] at (0,2.1) {Theorem A: The root  
	polytope
	$\NP(Q)$ is reflexive and terminal.};
	\node[text width=4.5cm] at (-3.5,-0.5) {Theorem B: $\NP(Q)$ is 
	polar dual to flow 
	polytope $\Fl_{Q^\vee}$ \ \ of dual quiver 
	$Q^\vee$.};
	\node[text width=6cm] at (2.7,0.3) {Theorem C: $\NP(Q)$ is 
	polar dual to marked order polytope $\overline{\mathcal{O}}_R(P)$.};
	\node[text width=6.3cm] at (2.9,-1) {Theorem D: $\F_Q$ of 
	$\NP(Q)$ refines $\NF(\mathcal{O}(P))$ of order polytope $\mathcal{O}(P)$.  If $P$ graded, they coincide.};
\end{tikzpicture}
\caption{Overview of our main results  concerning root, flow and order polytopes: they appear 
	as \cref{t:reflexive},
	\cref{thm:dual},
	\cref{prop:NPreflexive},
	\cref{thm:refine}}
\label{fig:overview}
\end{figure}

We now explain our main results.  See \cref{fig:overview} for an overview.

 \begin{mainthm}\label{thm:reflexive}
 Let $Q$ be a strongly-connected quiver or starred quiver.
 Then the root polytope $\NP(Q)$ is reflexive and terminal 
 (cf \cref{def:reflexive} and 
 \cref{def:terminal}).  That is, the polar dual of $\NP(Q)$ is a lattice polytope,
 and the only lattice points of $\NP(Q) \cap \Z^n$ are the origin and the vertices.
 \end{mainthm}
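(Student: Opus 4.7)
The plan is to prove reflexivity and terminality separately in three steps, using strong connectivity in three places: to place the origin in the interior of $\NP(Q)$, to ensure boundedness of the polar dual, and to close the combinatorial case analysis. For the first of these, observe that strong connectivity of the identified quiver $\Qbar$ forces every arrow $a$ to lie on some oriented cycle, around which the vectors $u_{a'}$ telescope to $0$ (reading $e_\star$ as $0$). Summing over one such cycle per arrow produces a relation $0 = \sum_a \lambda_a u_a$ with every $\lambda_a > 0$, placing $0$ in the relative interior of $\NP(Q)$.

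For terminality I would argue that each $u_a$ lies in $[-1,1]^n$, so $\NP(Q) \subseteq [-1,1]^n$ and any lattice point $p \in \NP(Q)$ has entries in $\{-1,0,1\}$. Set $I = \{i : p_i = 1\}$, $J = \{j : p_j = -1\}$, and consider the functional $w_I(x) = \sum_{i \in I} x_i$: since $w_I(u_a) \in \{-1,0,1\}$ for every arrow, the maximum of $w_I$ on $\NP(Q)$ is at most $1$, so evaluating on $p$ gives $|I| \leq 1$; symmetrically $|J| \leq 1$. This leaves $p \in \{0, \pm e_i, e_i - e_j\}$. For each non-zero possibility, intersecting the relevant supporting hyperplanes (of the form $\{\langle e_i, x\rangle = 1\}$ or $\{\langle -e_j, x\rangle = 1\}$) pins $p$ onto a face of $\NP(Q)$ whose vertices are only generators $u_a$ of a very specific shape, and a short convex combination argument then forces $p = u_a$ for an honest arrow $a$.

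For reflexivity I would exhibit the polar dual explicitly as
\[
\{\psi \in \R^n : \psi_{h(a)} - \psi_{t(a)} \leq 1 \text{ for all } a \in \Arr(Q)\},
\]
using the convention $\psi_\star = 0$ (after identifying starred vertices; in the starless case one quotients by the translation $\psi \mapsto \psi + c\mathbf{1}$). The coefficient matrix of this linear system is (a submatrix of) the signed incidence matrix of $\Qbar$, which is classically \emph{totally unimodular}. Total unimodularity, together with the integer right-hand side and boundedness from the first step, force every vertex of the polar dual to be a lattice point, so the polar dual is a lattice polytope and $\NP(Q)$ is reflexive.

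The main obstacle is the case $p = e_i - e_j$ (with $i \neq j$) in the terminality step: here $p$ lies on the hyperplane $\sum x_k = 0$, which passes through the interior point $0$ and so is not a supporting hyperplane of $\NP(Q)$, so one cannot read off a bounding face directly. The fix is to intersect the two supporting hyperplanes $\{\langle e_i, x\rangle = 1\}$ and $\{\langle -e_j, x\rangle = 1\}$, whose common face of $\NP(Q)$ has vertex set $\{u_a : a \text{ is an arrow } \vv_j \to \vv_i\}$ --- either the single point $\{e_i - e_j\}$ or empty. This pins $p$ onto an actual generator and completes the terminality argument.
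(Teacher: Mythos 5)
Your argument is correct, and it diverges from the paper most substantively on reflexivity. The paper proves reflexivity constructively: it introduces $\bullet$-labelings and \emph{facet arrow-labelings}, shows via facet components (\cref{lem:containstar}) that every facet arrow-labeling is integer-valued (\cref{prop:integral}), and then reads off the integral facet normals in \cref{t:reflexive}. Your route --- recognize the matrix of vertex coordinates of $\NP(Q)$ as a submatrix of the signed incidence matrix of $\overline{Q}$, hence totally unimodular, and invoke Hoffman--Kruskal to get integrality of the polar dual --- is exactly the alternative the authors flag in \cref{r:TU} but choose not to pursue, because the explicit combinatorial description of the facets is needed later in the paper (the face-fan comparisons in \cref{sec:markedorder} and the Picard group computations in \cref{sec:toric}). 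Your approach is shorter and more classical; theirs buys the facet dictionary. For terminality, the paper's \cref{p:NPlatticepts} analyzes the barycentric coefficients $m_a$ of a lattice point $p = \sum m_a u_a$ directly, extracting a contradiction from a non-integer coordinate, whereas you bound $|\{i : p_i = 1\}|$ and $|\{j : p_j = -1\}|$ via the coordinate-sum functionals $w_I, w_J$ and then pin $p$ onto a face by intersecting supporting hyperplanes; these are genuine technical variants and both are sound. Note only that the $p = e_i$ case in your scheme needs the same small extra step as your $p = e_i - e_j$ case: the face cut out by $\{x_i = 1\}$ also has the $e_i - e_k$ among its vertices, and you must rule out positive weight on those by inspecting the $k$-th coordinate of $p$. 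It goes through, but your write-up treats $e_i$ as the easy case and $e_i - e_j$ as the obstacle, when really both need the same one-line coordinate check. Finally, for the interior point the paper sums $u_a$ along an oriented path $\star \to \vv_h$ and one $\vv_h \to \star$ for each $h$, while you sum along one oriented cycle per arrow; both correctly express $\mathbf{0}$ as a strictly positive convex combination of the vertices.
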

\cref{thm:reflexive} appears as \cref{t:reflexive} and  
\cref{lem:0interior}.
Note that \cref{t:reflexive} also includes an explicit description
of the facets of $\NP(Q)$ in terms of \emph{facet arrow-labelings} of the quiver.
 The statement that a strongly-connected quiver $Q$ gives rise to 
 a reflexive and terminal root polytope
 $\NP(Q)$ already appeared in 
\cite[Proposition 1.4]{Higashitani} without a proof;  
\cite{Higashitani}
asserts
that the proof 
is the same as in the case of symmetric directed graphs, citing
\cite[Proposition 4.2]{Matsui} and \cite[Lemma 1.2]{Ohsugi}.
We provide an independent proof of \cref{thm:reflexive} for completeness.

Our next main results relate special cases of root polytopes to 
\emph{flow polytopes}
 \cite{Hille}, see \cref{def:flow}, and to \emph{(marked) order polytopes} \cite{order, marked}. These are two classes of polytopes which
have been extensively studied in toric geometry and combinatorics.  

\begin{mainthm}\label{mainthm:dual}
Let $Q$ be a strongly-connected 
	starred quiver with vertices $\{\vv_1,\dots,\vv_n\} \cup \{\star\}$ which is planar 
	(and comes with 
	a given plane embedding),
and let $Q^{\vee}$ be the dual quiver (which is plane, connected, and 
acyclic),  cf.
\cref{def:dual}. 
Then the root polytope $\NP(Q)$ 
is integrally equivalent to the polar dual of the flow polytope
$\Fl_{Q^{\vee}}$.
\end{mainthm}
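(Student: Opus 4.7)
The plan is to exhibit an explicit affine lattice isomorphism from $\NP(Q)^\circ$ to $\Fl_{Q^\vee}$ that realises the classical planar duality between \emph{tensions} on $\Qbar$ and \emph{flows} on its plane dual $Q^\vee$.

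\textbf{Step 1: Polar dual as a tension polytope.}  A vector $v \in \R^n$ lies in $\NP(Q)^\circ$ iff $\langle v, u_a \rangle \leq 1$ for every $a \in \Arr(Q)$.  Extending $v$ to the vertices of $\Qbar$ by declaring $v_\star := 0$, the three cases of \cref{def:NP} collapse into the uniform inequality $v_j - v_i \leq 1$ for every arrow $i \to j$ of $\Qbar$.  I then perform the affine substitution
\[
  x_a := 1 - (v_j - v_i) \qquad \text{for each arrow } a: i \to j \text{ of } \Qbar,
\]
so that $\NP(Q)^\circ$ becomes the intersection of the nonnegative orthant in $\R^{\Arr(\Qbar)}$ with the affine subspace which is the image of the map $T: v \mapsto x$.

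\textbf{Step 2: Planar duality identifies the image with $\Fl_{Q^\vee}$.}  The bijection $\Arr(\Qbar) \leftrightarrow \Arr(Q^\vee)$ given by the plane-dual construction of \cref{def:dual} lets us view $x$ as a vector indexed by arrows of $Q^\vee$.  The affine subspace $T(\R^n)$ is cut out by the cycle relations on $\Qbar$: for every oriented cycle $C$, one has $\sum_{a \in C} \epsilon_a^{C}(v_j - v_i) = 0$, where $\epsilon_a^C \in \{\pm 1\}$ records whether $a$ agrees with the traversal direction of $C$.  Substituting for $x$ yields $\sum_{a \in C} \epsilon_a^C x_a = \sum_{a \in C} \epsilon_a^C$.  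Since $\Qbar$ is plane, the cycle space is spanned by face boundaries, so it suffices to consider $C = \partial f$ for each face $f$.  For a \emph{bounded} face $f$ of $\Qbar$, the constants $\sum \epsilon^C_a$ cancel and the relation is precisely flow conservation at the vertex of $Q^\vee$ dual to $f$.  The unbounded face, however, is split in the plane dual by $\star$ into the source and the sink of $Q^\vee$, and the residual $\sum \epsilon^C_a$ comes out to $+1$ at the source and $-1$ at the sink: this is exactly the unit net-flow boundary condition.  Combined with the inequalities $x \geq 0$ of Step 1, this identifies $T(\NP(Q)^\circ) = \Fl_{Q^\vee}$.

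\textbf{Step 3: Integrality.}  The affine map $T$ is defined over $\Z$, and its inverse reconstructs $v_k$ by summing the signed values $1 - x_a$ along any path in $\Qbar$ from $\star$ to $\vv_k$; path-independence is exactly the cycle/flow-conservation relation of Step 2, and integrality of $v$ follows from integrality of $x$.  Thus $T$ restricts to a bijection of integer points, so $\NP(Q)^\circ$ and $\Fl_{Q^\vee}$ are integrally equivalent.

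The main obstacle will be Step 2, where the sign conventions need to be tracked carefully: one must verify that the orientation of each dual arrow $a^\vee$ matches the sign $\epsilon^{\partial f}_a$, and, most delicately, that the face containing $\star$ splits under the plane dual into exactly one source and one sink of $Q^\vee$ so that the constants $\sum \epsilon$ around that face work out to $+1$ and $-1$ respectively.  Once these conventions are correctly set up using \cref{def:dual}, the remainder of the identification is essentially formal.
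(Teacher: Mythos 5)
Your overall plan — identify the polar dual of $\NP(Q)$ inside $\R^{\Arr(Q)}$ via arrow coordinates, then match cycle relations on the starred quiver with vertex (flow) constraints on the plane dual — is essentially the same strategy the paper uses in the proof of \cref{thm:dual} (via $\NNP(Q^\vee)$, $\mathbf M_{Q^\vee,\R}$, and minimal closed paths). However, Step~2 as written contains a genuine error. For a bounded face $f$ of $\Qbar$ the constant $\sum_{a\in \partial f}\epsilon_a^{\partial f}$ does \emph{not} cancel: tracking the ``left-to-right'' dual-arrow convention of \cref{def:dual} shows that $\epsilon_a^{\partial f}=+1$ exactly when the dual arrow $a^{\vee}$ points into the dual vertex $q_f$, so $\sum_{a\in\partial f}\epsilon_a^{\partial f}=\inn(q_f)-\out(q_f)=-\theta(q_f)$, which is generically nonzero. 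This is not a bug but a feature: because you shifted to $x_a=1-(v_j-v_i)$, the relation $\sum\epsilon_a^{\partial f}x_a=\sum\epsilon_a^{\partial f}$ is precisely flow conservation at $q_f$ written in the coordinates $x_a=r_a+1$, i.e.\ it cuts out $V_{Q^\vee}+\mathbf{1}$, not $V_{Q^\vee}$. (If the constants actually vanished, the image would be $V_{Q^\vee}\cap\{x\ge 0\}=\{\mathbf 0\}$, since an acyclic quiver has no nonnegative circulations — a clear sign that something is off.)

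The discussion of the unbounded face is also off target. Under the planar duality of \cref{def:dual}/\cref{lem:bijection}, the unbounded face of the starred quiver $Q$ corresponds to a \emph{single} vertex of $Q^{\vee}$; it is not split into a source and a sink, and the paper's $\Fl_{Q^{\vee}}$ (\cref{def:flow}, \cref{rem:defflow}) is the canonical-weight flow polytope with homogeneous conservation $\sum_{a\to q}r_a=\sum_{b\leftarrow q}r_b$ at \emph{every} vertex, not a unit-net-flow polytope with designated source/sink. The unbounded-face relation is simply redundant: the bounded-face boundaries already span the cycle space of $\Qbar$, just as the flow constraints at all vertices of $Q^{\vee}$ have exactly one linear dependence. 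Once these two points are corrected, your Steps~1 and 3 go through and show $T(\NP(Q)^{\circ})=\Fl_{Q^{\vee}}+\mathbf 1$, which is integrally equivalent to $\Fl_{Q^{\vee}}$ — so the conclusion is still reached, but the route you sketch in Step~2 (and the ``obstacle'' you flag at the end, about $\star$ splitting the outer face into a source and a sink) is not the right thing to verify.
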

\cref{mainthm:dual} appears later as \cref{thm:dual},
though we have restated it using  \cref{lem:bijection}. 
Note that  
\cref{thm:reflexive}  and
\cref{mainthm:dual} 
imply that 
the flow polytope associated to any plane acyclic quiver
is reflexive; this was previously shown more generally,  without the planarity assumption, in  \cite{Hille}.

We now consider the case that $Q$ is a starred quiver that comes from a poset.
\begin{definition}\label{quiverfromposet}
A \emph{starred poset} 
is a finite poset $P$ with a decomposition $P = P_{\bullet} \sqcup P_{\star}$ such that $P_{\star}$ contains the 
minimal and maximal elements of $P$, and no two elements of $P_\star$ are related by a covering relation.
We associate a starred quiver $Q:=Q_{(P_{\bullet},P_{\star})}$ to $P$ by letting 
$Q$ be the Hasse diagram of $P$, with all edges of the Hasse diagram directed upwards, from smaller to larger, and the  starred vertices of $Q$ given by $P_\star$. 
\end{definition}

When we associate a starred quiver to a poset, we will 
always assume that poset is connected.  Note that 
the \emph{bounded extension} of an arbitrary poset, 
defined below, will  always 
have these properties.

\begin{definition}\label{rem:quiverfromposet}
Let $P$ be a finite poset.  The \emph{bounded extension}
$\hat{P}:=P \cup \{\hat{0}, \hat{1}\}$ of $P$ is the poset containing $P$ where new elements
$\hat{0}$ and $\hat{1}$ are adjoined such that $\hat 0$ is the unique minimal and $\hat 1$ the unique maximal element of $\hat P$.  The associated starred quiver $Q_{\hat{P}}$ is constructed as in \cref{quiverfromposet} where $\hat P=P \sqcup P_{\star}$ with $P_{\star} = \{\hat{0}, \hat{1}\}$.
\end{definition}

We note that the 
root polytopes associated to quivers $Q_{\hat{P}}$ were studied in \cite{HH} 
where it was proved that they are always terminal and reflexive -- 
a special case of \cref{thm:reflexive} above.

\begin{mainthm} \label{mainthm:poset}
	Let $Q:=Q_{(P_{\bullet}, P_{\star})}$ be a starred quiver that comes from a  starred poset $P=P_{\bullet} \sqcup P_{\star}$  as in \cref{quiverfromposet}.
Suppose that 
$P$ is ranked with rank function $R$ (cf \cref{def:rank}).
Then the root polytope $\NP(Q)$
of the starred quiver $Q$ 
is polar dual to the (translated) \emph{marked order polytope}
$\overline{\OO}_R(P)$
from \Cref{p:reflexive}.
It follows that 
$\overline{\OO}_R(P)$ is reflexive.
\end{mainthm}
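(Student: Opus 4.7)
The plan is to prove the polar duality $\NP(Q)^\vee = \overline{\OO}_R(P)$ by directly matching the defining linear inequalities of both polytopes. Since the Hasse diagram of a ranked poset need not be planar, we cannot in general invoke \cref{mainthm:dual}, so I would instead proceed via \cref{thm:reflexive} applied to the starred quiver $Q = Q_{(P_\bullet,P_\star)}$. First I would note that $Q$ is strongly-connected: any normal vertex $\vv_v$ admits an upward path in the Hasse diagram to some maximal element of $P$ (which lies in $P_\star$) and the reverse of an upward path from some minimal element of $P$ (also in $P_\star$); after identifying all starred vertices to a single $\star$, this produces oriented paths between any two normal vertices through $\star$. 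Hence \cref{thm:reflexive} applies: $\NP(Q)$ is reflexive with $0$ in its interior, and $\NP(Q)^\vee$ is the lattice polytope cut out by the constraints $\langle u_a, y \rangle \leq 1$ for $a \in \Arr(Q)$.

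Next I would recall the explicit description of $\overline{\OO}_R(P) \subset \R^{P_\bullet}$ from \cref{p:reflexive}: the translated marked order polytope is obtained from the standard marked order polytope $\OO_R(P)$ by translating by the rank vector $(R(v))_{v \in P_\bullet}$, so that the rank embedding lies at the origin. Each covering relation $v \lessdot w$ of the (bounded extension of) $P$ contributes a defining inequality of $\OO_R(P)$, and after translation this takes the form $\langle \pm(e_w - e_v), y \rangle \leq 1$ — the right-hand side equals $1$ precisely because $R(w) - R(v) = 1$ at each covering relation, which is the point where the ranked hypothesis is essential. The key verification is then that, under the bijection between arrows $a$ of $Q$ and covering relations of the Hasse diagram, the vector $u_a$ matches (up to sign determined by orientation) the coefficient vector of the corresponding translated inequality. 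This requires three case checks depending on whether $a$ is internal to $P_\bullet$ (giving $u_a = e_w - e_v$), enters $P_\bullet$ from $\star$ (giving $u_a = e_v$ and matching a marked boundary inequality from $P_\star$), or leaves $P_\bullet$ into $\star$ (giving $u_a = -e_v$, likewise).

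Having matched the two families of defining inequalities, we conclude $\NP(Q)^\vee = \overline{\OO}_R(P)$, and the reflexivity of $\overline{\OO}_R(P)$ is an immediate consequence of the reflexivity of $\NP(Q)$ via \cref{thm:reflexive}. The main obstacle will be the careful bookkeeping of sign, orientation, and translation conventions: one must check that every covering relation of the bounded extension of $P$ produces an arrow of $Q$ whose $u_a$ contributes precisely the matching inequality of $\overline{\OO}_R(P)$, and conversely that no inequality of $\overline{\OO}_R(P)$ is missed or becomes redundant. The explicit facet description of $\NP(Q)$ provided by \cref{t:reflexive} serves as a useful cross-check here, confirming that each vertex $u_a$ genuinely contributes a facet to $\NP(Q)^\vee$ so that the matching of inequalities is both injective and surjective with no redundancy to eliminate.
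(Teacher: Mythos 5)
Your proposal follows essentially the same route as the paper's proof: match the defining inequalities of the translated marked order polytope $\overline{\OO}_R(P)$ against the vertices $u_a$ of $\NP(Q)$, with the ranked hypothesis ensuring $R(b)-R(a)=1$ on covers $a\lessdot b$ so that after translation every inequality normalizes to have right-hand side $-1$, one for each arrow of $Q$. One slip to correct when fleshing this out: the relevant cover relations are those of the starred poset $P=P_\bullet\sqcup P_\star$ itself (with at least one endpoint in $P_\bullet$), since $Q=Q_{(P_\bullet,P_\star)}$ is the Hasse diagram of $P$ with starred vertices $P_\star$; the bounded extension $\hat P$ is the setting of \cref{thm:D}, not of this statement, and if $P_\star$ is larger than $\{\hat 0,\hat 1\}$ the two sets of cover relations genuinely differ. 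Also reconcile the sign convention with \eqref{eq:polar}, where the polar dual is cut out by $x\cdot y\geq -1$ rather than $\leq 1$.
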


\cref{mainthm:poset} appears as \cref{prop:NPreflexive}.
The fact that  
$\overline{\OO}_R(P)$ is reflexive was known earlier, see
  \cite[Proposition 3.1 and Theorem 3.4]{FFP}.
  The related statement that the 
  order polytope of a poset $P$ is Gorenstein if and only if $P$ is graded
  goes back to 
  \cite{hibi1987distributive, Stanley78}.

The following result 
relates the root polytope to the (ordinary) 
order polytope, and appears as
\cref{thm:refine}. 

\begin{mainthm}\label{thm:D}
Let $P$ be a finite ranked poset and let 
$Q:=Q_{\hat{P}}$ be the starred quiver associated to the bounded extension
	$\hat{P}$ of $P$ (as in \cref{rem:quiverfromposet}).
Then the face fan $\F_Q$ of the root polytope $\NP(Q)$
of the starred quiver $Q$
refines the (inner) normal fan $\NF(\OO(P))$ of 
the \emph{order polytope} $\mathcal{O}(P)$ of $P$:
the two fans have the same set of rays, and each 
maximal cone of $\NF(\OO(P))$ is a union of maximal cones of 
	$\F_{Q_P}$.  And if $P$ is a graded poset, then 
	$\F_{Q_P}$ coincides with 
 $\NF(\OO(P))$.
\end{mainthm}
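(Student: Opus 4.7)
The plan is to first match the rays of both fans, then establish refinement by associating an order filter of $P$ to each facet of $\NP(Q)$, and finally upgrade to equality when $P$ is graded. The vertices of $\NP(Q)$ are $e_j-e_i$ for each cover $p_i\lessdot p_j$ in $P$, $e_j$ for each minimal $p_j\in P$ (from the arrow $\hat 0\to p_j$), and $-e_i$ for each maximal $p_i$ (from $p_i\to\hat 1$). The facets of $\OO(P)\subset\R^P$ are $f(p_j)\geq f(p_i)$ for each cover, $f(p_j)\geq 0$ for minimal $p_j$, and $f(p_i)\leq 1$ for maximal $p_i$; their inner normals are $e_j-e_i$, $e_j$, and $-e_i$ respectively, matching the vertex list of $\NP(Q)$. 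So both fans have the same rays.

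For the refinement, I would fix a facet $\mathcal F$ of $\NP(Q)$ and pick a point $c$ in the relative interior of $\cone(\mathcal F)$. Since the vertices of $\OO(P)$ are precisely the indicator functions $\chi_F$ of order filters $F\subseteq P$, the functional $c$ attains its maximum over $\OO(P)$ at $\chi_{F_c}$ for a unique order filter $F_c$. Using the facet arrow-labeling description of $\mathcal F$ from \cref{thm:reflexive}, I would show that $F_c$ depends only on $\mathcal F$, and that every vertex $u_a\in\mathcal F$ is an inner facet-normal of $\OO(P)$ at $\chi_{F_c}$; the latter reduces, by case analysis on the type of arrow $a$, to $\chi_{F_c}(p_i)=\chi_{F_c}(p_j)$ for $a\colon p_i\to p_j$ with $u_a\in\mathcal F$, to $p_j\notin F_c$ for $a\colon\hat 0\to p_j$, and to $p_i\in F_c$ for $a\colon p_i\to\hat 1$. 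Once these hold, $\cone(\mathcal F)\subseteq\sigma_{F_c}$ and each maximal cone of $\NF(\OO(P))$ is the union of those $\cone(\mathcal F)$ with $F_\mathcal F=F$. An alternative shortcut uses \cref{mainthm:poset}: the polar dual $\NP(Q)^\vee$ equals the translated marked order polytope $\overline{\OO}_R(\hat P)$, and since only $\hat 0,\hat 1$ are marked, $\overline{\OO}_R(\hat P)$ is an affine transform of $\OO(P)$, giving a direct comparison of the two normal fans.

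For the graded case, I would prove that the assignment $\mathcal F\mapsto F_\mathcal F$ is a bijection between facets of $\NP(Q)$ and order filters of $P$; combined with matching rays, this forces $\F_Q=\NF(\OO(P))$. The inverse map sends $F$ to the candidate facet $\mathcal F_F$ with vertex set determined by the conditions in the refinement step, and the verification that this is an actual facet uses the global rank function of the graded poset to exhibit an explicit supporting hyperplane. The main obstacle is the case analysis in the refinement step: showing that $F_c$ is closed upward under covers of $P$ requires the consistency of facet arrow-labelings along chains in $\hat P$ from \cref{thm:reflexive}, which must force $p_j\in F_c$ whenever $p_i\in F_c$ and $p_i\lessdot p_j$.
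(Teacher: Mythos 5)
Your proposal gets the broad outline right—match the rays, then show each maximal cone of $\F_Q$ lies in a single normal cone of $\OO(P)$—but it leaves a genuine gap exactly where the rankedness of $P$ does the work. You reduce the refinement to verifying that for a facet $\mathcal F$ of $\NP(Q)$ with your chosen filter $F_c$, every $u_a\in\mathcal F$ is a facet normal at $\chi_{F_c}$, and you explicitly flag this as ``the main obstacle'' without resolving it. (There is also a small confusion of aims here: if $F_c$ is defined as the filter whose characteristic function is the minimizer of $c$ over $\OO(P)$, then $F_c$ is a filter by construction, so ``closed upward'' is not what needs proving; what needs proving is that the entire set $\{u_a : a\in F(M)\}$ satisfies the three normal-at-$\chi_{F_c}$ conditions you list, which is a statement about the combinatorial structure of facet arrow-labelings and not about $F_c$ alone.) The paper closes this gap with \cref{lem:orderideal}: for a facet arrow-labeling $M$, the facet component $C_0$ of $\star_0$ has $L(\vv)=-\rank(\vv)$ for all of its vertices, which by a maximal-chain argument forces $C_0\setminus\{\star_0\}$ to be a lower order ideal and the normal vertices of $C_1$ to be the complementary filter. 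This is where the ranked hypothesis is used, and is the substance of the theorem; your write-up identifies the target but not the mechanism. (Minor: with the inner normal fan, $c$ should be \emph{minimized}, not maximized, at $\chi_{F_c}$.)

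The ``alternative shortcut'' via \cref{mainthm:poset} does not work as stated. That theorem applies to a starred poset that is itself ranked, and the relevant starred poset here is $\hat P$ with $P_\star=\{\hat 0,\hat 1\}$. As the paper remarks just after Theorem D, a ranked $P$ need not have a ranked $\hat P$ (see \cref{fig:starredquiver}), so you cannot invoke \cref{mainthm:poset} to identify $\NP(Q_{\hat P})^*$ with a marked order polytope in general. That route only works when $P$ is graded (equivalently $\hat P$ is ranked), in which case $\OO_R(\hat P)$ is a dilation of $\OO(P)$ and the two normal fans agree—this is essentially how one could recover the final ``graded'' part of the theorem, but it cannot prove the refinement statement for merely ranked $P$. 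The graded-case bijection you propose is also left at the level of a plan; the paper gets it for free from \cref{rem:graded}, which shows that for graded $P$ the facet components $C_0,C_1$ contain \emph{all} arrows internal to the order ideal and filter, respectively.
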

\begin{remark}
Note that it is possible for $P$ to be a ranked poset but for
$\hat{P}$ to fail to be a ranked poset, see 
	   \cref{fig:starredquiver}.
	Thus the hypotheses of \cref{mainthm:poset} and \cref{thm:D} 
	are subtly different.
\end{remark}

In light of 
\cref{mainthm:dual} and 
\cref{mainthm:poset}, it is interesting to consider the situation
where $P$ is a poset with a planar embedding. 
This setting was studied by  Meszaros-Morales-Striker 
\cite[Theorem 3.14]{MMS}, who 
 showed that a flow polytope $\Fl(Q)$ 
 is (integrally equivalent to) the order polytope $\OO(P)$
 of a poset $P$ exactly 
 when $Q$ is a planar embedding of $P$; this result 
was extended to marked order polytopes  in \cite{LMS}.

We next turn our attention to the toric variety $Y(\F_Q)$ associated to the face fan 
$\F_Q$
of the root polytope.
When $Q$ is strongly connected, 
\cref{thm:reflexive} implies that 
$Y(\F_Q)$ is a Gorenstein Fano variety with at most terminal singularities. 
The fact that $Y(\F_Q)$ is very close to being smooth is further underlined by the following result, which appears as \cref{p:desing}. 

\begin{mainthm}\label{p:desing0} 
Let $Q$ be a strongly-connected starred quiver,
and let $\F_Q$ be the  face fan of the root polytope $\NP(Q)$. 
 There exists a refinement $\widehat \F_Q$ of $\F_Q$ such that the resulting morphism $Y(\widehat \F_Q)\to Y(\F_Q)$ is a small crepant toric desingularisation. 
\end{mainthm}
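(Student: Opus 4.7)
The plan is to take $\widehat\F_Q$ to be any simplicial refinement of $\F_Q$ that introduces no new rays, and to deduce smoothness, smallness, and crepancy all from intrinsic features of $\NP(Q)$.

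The crux is an arithmetic observation about the ray generators of $\F_Q$. Each such generator is a vertex $u_a$ of $\NP(Q)$, so has the form $e_i-e_j$ or $\pm e_i$. After consolidating all starred vertices of $Q$ into a single auxiliary coordinate (cf.\ \cref{rem:one}), the integer matrix whose columns are any finite collection of such vectors is the node--arc incidence matrix of a directed multigraph, and is therefore totally unimodular. In particular, any $n$ linearly independent vectors drawn from $\{e_i - e_j \ |\ i \neq j\} \cup \{\pm e_i\}$ form a $\Z$-basis of $\Z^n$. Equivalently, \emph{every} simplicial cone whose rays lie among the ray generators of $\F_Q$ is a smooth cone for the standard lattice.

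I next produce the refinement. By standard polyhedral geometry, any polytope admits a triangulation using only its own vertices, and this triangulation may be chosen regular (coherent) by using a generic height function on the vertex set. Applying this to the boundary of $\NP(Q)$ and coning from the origin yields a simplicial fan $\widehat\F_Q$ refining $\F_Q$ with the same set of rays as $\F_Q$. By the previous paragraph, every maximal cone of $\widehat\F_Q$ is smooth, so $Y(\widehat\F_Q)$ is a smooth toric variety.

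The induced morphism $Y(\widehat\F_Q) \to Y(\F_Q)$ is small because $\widehat\F_Q$ has the same rays as $\F_Q$ and hence the same torus-invariant prime divisors, so no divisor is contracted. For crepancy, the reflexivity of $\NP(Q)$ provided by \cref{thm:reflexive} makes $Y(\F_Q)$ Gorenstein Fano with $-K_{Y(\F_Q)}$ equal to the sum of its torus-invariant prime divisors; since $\widehat\F_Q$ introduces no new rays, the support function of $-K_{Y(\widehat\F_Q)}$ agrees with the pullback of that of $-K_{Y(\F_Q)}$, and the morphism is crepant. The only nontrivial ingredient is the total unimodularity observation in the second paragraph; the rest is a routine assembly using \cref{thm:reflexive}.
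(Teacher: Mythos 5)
Your proof is correct, and the key smoothness argument via total unimodularity of the vertex matrix (so that any $n$ independent vectors among the $u_a$ form a $\Z$-basis) is the same as the paper's, cf.\ \cref{r:TU}. Where you diverge is in how you produce the refinement and establish crepancy. The paper's proof leans on Batyrev's existence theorem for maximal projective crepant partial (MPCP) desingularisations of Gorenstein toric Fanos: it cites \cite[Theorem 2.2.24]{Batyrev} to obtain a crepant refinement, then uses terminality of $\NP(Q)$ (no lattice points on $\partial\NP(Q)$ other than vertices) to conclude that the refinement cannot add rays and is therefore small, and finally applies the TU argument for smoothness. You instead construct the refinement by hand, as the cone over a regular triangulation of $\partial\NP(Q)$ induced by a generic height function on the vertex set, and then verify crepancy directly by observing that the anticanonical support function on $\F_Q$ (taking value $-1$ on each ray generator, well-defined by reflexivity) is already linear on each cone of the refinement, so it pulls back to the anticanonical support function on $\widehat\F_Q$. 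Your route is more elementary and self-contained, bypassing the GKZ/secondary-polytope machinery that underlies the MPCP theorem; the paper's route is shorter given that machinery as a black box. The one step worth spelling out a bit more carefully in your version is why the coned boundary triangulation is in fact a \emph{refinement} of $\F_Q$ (and not merely a fan with the same support and rays): each maximal simplex of the restriction of the regular triangulation to $\partial\NP(Q)$ is $(n-1)$-dimensional and lies in the boundary, hence lies in a single facet of $\NP(Q)$, so its cone sits inside the corresponding maximal cone of $\F_Q$.
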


When our quiver $Q$ comes from a ranked poset $P$ as in \cref{thm:D},  there is a third fan to consider. Namely, \cref{thm:D} tells us that  the normal fan 
$\NF(\OO(P))$ 
of the order polytope 
is refined by $\F_Q$, while \cref{p:desing0} tells us that $\F_Q$ is refined by $\widehat\F_Q$. All three fans share the same set of rays. The toric variety associated to  $\NF(\OO(P))$ is the 
so-called Hibi projective toric variety $Y_{\OO(P)}$ associated to the poset $P$, and the two results join together to give a small desingularisation of $Y_{\OO(P)}$ that goes via $Y(\F_Q)$, 
\[
Y(\widehat\F_Q)\longrightarrow Y(\F_Q)\longrightarrow Y_{\OO(P)}.
\]
In other words, as a corollary of \cref{thm:D} and \cref{p:desing0}, any Hibi toric variety associated to a ranked poset has a small toric desingularisation. 

Our final main result is a combinatorial description of the Picard group of $Y(\F_Q)$ in the case where $Q$ comes from a ranked poset $P$. Recall that the poset $P$ has its bounded extension $\hat P$ with one unique maximal element. It also has a `maximal' extension $P_{\max}$ where each maximal element $m$ is covered by a separate adjoined element $\hat 1_m$. In between these two extensions we define a new \textit{canonical extension} $\bar P$ in \cref{d:canonical-extension}, and we prove the following theorem, which appears as 
\cref{t:canonicalPCartier} and \cref{p:Hibidesing}.

\begin{mainthm}\label{t:Pic} Let $P$ be a ranked poset, 
 let $Y(\F_Q)$ be the toric variety associated to 
the quiver $Q = Q_{\bar P}$,
and let $Y(\widehat\F_Q)$ be 
its desingularisation from \cref{p:desing0}. The Picard rank of $Y(\F_Q)$ is equal to the number of maximal elements in the canonical extension $\bar P$ of $P$, and the Picard rank of $Y(\widehat\F_Q)$ is equal to the number of maximal elements in $P$. 
\end{mainthm}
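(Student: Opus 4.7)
The plan is to compute both Picard ranks from the standard toric exact sequence
\[
0 \to M \to \bigoplus_{\rho \in \Sigma(1)} \Z\, D_\rho \to \operatorname{Cl}(Y(\Sigma)) \to 0,
\]
valid for any complete fan $\Sigma$ in $\R^n$, where $M = \Z^n$ is the character lattice. This gives $\operatorname{rank} \operatorname{Cl}(Y(\Sigma)) = |\Sigma(1)| - n$ unconditionally, after which $\operatorname{Pic}(Y(\Sigma))$ is cut out as the subgroup of classes of Cartier divisors.

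For $Y(\widehat{\F}_Q)$, smoothness (granted by \cref{p:desing0}) forces $\operatorname{Pic} = \operatorname{Cl}$, so its Picard rank is just $|\widehat{\F}_Q(1)| - n$. Because the desingularisation in \cref{p:desing0} is \emph{small}, no new rays are introduced, so $|\widehat{\F}_Q(1)| = |\F_Q(1)|$, which by \cref{thm:reflexive} equals $|\Arr(Q_{\bar P})|$ (equivalently the number of Hasse edges of $\bar P$). What remains is a combinatorial count: unwinding \cref{d:canonical-extension}, one verifies that
\[
|\Arr(Q_{\bar P})| - |P| \;=\; |\{\text{maximal elements of } P\}|
\]
by an Euler-type identity tracking the starred vertices and Hasse edges adjoined above and below $P$ in passing from $P$ to $\bar P$.

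For $Y(\F_Q)$ the argument is more delicate because $\F_Q$ need not be simplicial. By \cref{t:reflexive}, the maximal cones of $\F_Q$ correspond to facets of $\NP(Q)$, each described by a facet arrow-labeling; a Weil divisor $D = \sum_{a} c_a D_a$ is Cartier iff for each facet $F$ there is $m_F \in M$ with $\langle m_F, u_a\rangle = c_a$ for every arrow $a$ labeling a vertex of $F$, and these local data must agree on every intersection $F \cap F'$ of adjacent facets. The strategy is to construct an explicit family of Cartier divisors indexed by the maximal elements of $\bar P$, verify independence modulo principal divisors, and then show that every Cartier class lies in their $\Z$-span. The definition of $\bar P$ in \cref{d:canonical-extension} is tailored precisely so that the resulting system of compatibilities has solution rank equal to the number of maximal elements of $\bar P$.

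The principal obstacle is this last step: translating the pairwise agreement of the functionals $m_F$ across adjacent facets of $\NP(Q)$ into an explicit combinatorial system on the Hasse diagram of $\bar P$, and then checking that its solution space has the expected rank. Here the fine structure of the facet arrow-labelings from \cref{t:reflexive} must be combined with the construction of $\bar P$ so that cancellations occur in precisely the right places. The smooth case for $Y(\widehat{\F}_Q)$ is, by comparison, essentially immediate once the Euler-type count is established; the substantive content of the theorem is the identification of Cartier divisors on the possibly non-simplicial $Y(\F_Q)$ via the maximal elements of the canonical extension.
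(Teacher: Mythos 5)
Your strategy for $Y(\widehat{\F}_Q)$ starts correctly: smoothness gives $\Pic = \operatorname{Cl}$, the desingularisation is small so $\widehat\F_Q$ and $\F_Q$ have the same rays, and those rays are in bijection with $\Arr(Q_{\bar P})$, so $\operatorname{rank}\Pic(Y(\widehat\F_Q)) = |\Arr(Q_{\bar P})| - |P|$. But the Euler-type identity you then assert,
\[
|\Arr(Q_{\bar P})| - |P| = |\{\text{maximal elements of }P\}|,
\]
is \emph{false} in general. Writing $|\Arr(Q_{\bar P})| = |\{\text{Hasse edges of }P\}| + |\min(P)| + |\max(P)|$, your identity would force the Hasse diagram of $P$ to be a forest with one tree per minimal element. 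Take the diamond poset $P=\{a,b,c,d\}$ with $a<b,c<d$ and $b,c$ incomparable (ranked, even graded): then $|\Arr(Q_{\bar P})|=6$ and $|P|=4$, so the left side is $2$, while $P$ has a unique maximal element. The same failure occurs already for the $\Lambda$-poset $\{a,b<c\}$. The quantity $|\Arr(Q_{\bar P})|-|P|$ equals $1+b_1$, where $b_1$ is the first Betti number of the Hasse diagram of $\hat P$, and this is not controlled by $|\max(P)|$ alone. So this step cannot be ``verified'' as written; it would need to be replaced by an actual computation. Be aware that the paper's own proof (\cref{p:Hibidesing}, which invokes \cref{r:PicvsCl} for the claim $\operatorname{Cl}(Y(\F_Q))\cong\Z^{|\max(P)|}$) appears to run into the same problem in the diamond example, so your gap tracks a genuine issue in the source that should be raised with the authors rather than papered over.

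For the first assertion, about $\Pic(Y(\F_Q))$, you correctly identify the crux — characterise Cartier divisors via the facet arrow-labelings and show the resulting system of compatibilities has rank $|\max(\bar P)|$ — and this is the route the paper takes: \cref{prop:Cartier} translates Cartierness into vanishing of signed path-sums within facet components, \cref{t:canonicalPCartier} then identifies the Cartier lattice with $\mathbf C_Q$ (independent-sum arrow labelings) and produces the exact sequence $0\to\mathbf M_Q\to\mathbf C_Q\to\Z^{\V_\star\setminus\{\star_0\}}\to 0$, with the canonical extension $\bar P$ of \cref{d:canonical-extension} engineered precisely so that this works (via \cref{lem:orderideal} and \cref{l:canonicalP}). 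Your sketch stops exactly where this verification has to happen, so as written it is a plan rather than a proof; the substantial content lies in executing the analogue of \cref{t:canonicalPCartier}, which you would need to supply.
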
  

Note that the Hibi toric variety $Y_{\OO(P)}$ has Picard rank equal to $1$, see \cite[Section~2.3]{Miura:CYinHibi}, coinciding with the number of maximal elements in $\hat P$. We find that the three toric varieties $Y_{\OO(P)},Y(\F_Q)$ and $Y(\widehat \F_Q)$ constructed out of $P$ relate naturally to the three extensions $\hat P,\bar P$ and $P_{\max}$ of the ranked poset~$P$.

At the heart of all of our results are the reflexive polytopes $\NP(Q)$ associated to strongly-connected starred quivers. Reflexive polytopes were introduced by Batyrev \cite{Batyrev} in the study of
mirror symmetry for toric varieties and,
as mentioned earlier, this work was motivated by our concurrent work on mirror symmetry for Schubert varieties
and their toric degenerations \cite{RW2}, see \cref{sec:toric}
for more details.
However, the results of this paper are combinatorial in nature.
While some of the results presented here have appeared before (sometimes in special
cases or without proofs), we hope that this exposition will be useful
for illuminating the connections between root polytopes,
flow polytopes, and (marked) order polytopes, and their roles within toric geometry. 

The structure of this paper is as follows. 
In \cref{sec:root} 
we prove that if $Q$ is strongly-connected,
$\NP(Q)$ is reflexive and terminal.  We also describe the facets
of $\NP(Q)$.
In \cref{sec:planar} we consider the case that $Q$ has a planar embedding,
and we show that in this case, $\NP(Q)$ is (integrally equivalent to the)
polar dual to the flow
polytope of the planar dual quiver $Q^{\vee}$.
In \cref{sec:markedorder} we consider
the case that $Q$ comes from a ranked poset, and we relate
$\NP(Q)$ to a marked order polytope.  We also relate the face fan $\F_Q$ of 
$Q$ to the (inner) normal fan of the corresponding order polytope.
Finally in \cref{sec:toric} we discuss the connection to mirror symmetry
and toric geometry:
we show that when $Q$ is a strongly-connected
starred quiver, 
there is a {small} toric desingularisation $Y(\widehat\F_Q)$ of 
the toric variety $Y(\F_Q)$,
and we compute the Picard group of $Y(\F_Q)$. We give a particularly explicit description in the case where $Q$ comes from a ranked poset.

\section{Root polytopes}\label{sec:root}

In this section 
we show that 
when $Q$ is a quiver or starred quiver that is  strongly connected,
the polytope $\NP(Q)$ is reflexive and terminal.
In this case we also describe the facets of $\NP(Q)$ in terms of 
certain labelings of the arrows of $Q$.

\subsection{Preliminaries}

\begin{definition}
We say that two integral polytopes 
	$\PP_1 \subset \R^n$ and $\PP_2 \subset \R^m$ are \emph{integrally
	equivalent} if there is an affine transformation 
	$\phi:\R^n \to \R^m$ whose restriction to $\PP_1$
	is a bijection $\phi:\PP_1 \to \PP_2$ that preserves the lattice,
	i.e. $\phi$ is a bijection between 
	$\Z^n \cap \aff(\PP_1)$ and $\Z^m \cap \aff(\PP_2)$, where
	$\aff(\cdot)$ denotes the affine span.  The map $\phi$
	is then an \emph{integral equivalence}.  
\end{definition}
We note that integrally equivalent polytopes (sometimes called \emph{isomorphic} or 
\emph{unimodularly equivalent}) have the same Ehrhart polynomials
and hence the same volume.

\begin{definition}
Suppose that $\PP\subset \R^n$ is a lattice polytope of full dimension $n$
which contains the origin in its interior. Then
the \emph{polar dual polytope} of $\PP$ is
	\begin{equation}\label{eq:polar}
\PP^*:= \{y \in (\R^n)^* \ \vert \ x \cdot y \geq -1
  \text{ for all }x\in \PP\}.
	\end{equation}
\end{definition}
\begin{remark}
There are two common definitions given for the polar dual.
Our convention in \eqref{eq:polar} is consistent with 
the conventions of Polymake.  
The definition of polar dual from \cite{Ziegler} is 
\begin{equation}\label{eq:polar2}
\PP^\Delta:= \{y \in (\R^n)^* \ \vert \ x \cdot y \leq 1
 \text{ for all }x\in \PP\}.
\end{equation}
The polytope $\PP^{\Delta}$ is simply the negative of $\PP^*$.
\end{remark}

\begin{definition} \label{def:reflexive}
A \emph{reflexive polytope of dimension $n$} is a lattice polytope of full 
dimension $n$ such that 
its polar dual is also a lattice polytope, i.e. it is 
bounded and has vertices with integer coordinates.
\end{definition}

More generally, we will use the word \emph{reflexive} for any polytope that is integrally equivalent
to a reflexive polytope.

\begin{definition}\label{def:terminal}
A lattice polytope $\PP \subset \Z^n$ is called \emph{terminal}
if $\mathbf{0}$ and the vertices are the only lattice points in 
	$\PP \cap \Z^n$ (with $\mathbf{0}$ in the interior).
\end{definition}

\subsection{Examples of root polytopes}

Recall the definition of root polytope from \cref{def:NP}.
We can associate natural starred quivers and thereby root polytopes to both acyclic quivers and to posets, in the following way.

\begin{example}[Strongly-connected quiver from an acyclic quiver]\label{ex:acyclic}
We can get a strongly-connected starred quiver from any acyclic quiver:
we simply designate each sink and source as a starred vertex, 
	  see \cref{fig:starredquiver}.
\end{example}

\begin{example}[Strongly-connected quiver from a  poset]\label{def:posetquiver}
As in \cref{quiverfromposet} and \cref{rem:quiverfromposet},
we can get a strongly-connected starred quiver from a starred poset 
(the underlying quiver will be connected as long as e.g.
the poset has a unique minimal or unique maximal element),
or from the bounded extension of any finite poset $P$.
The latter construction is illustrated at the right of \cref{fig:starredquiver}. 
\end{example}

\begin{figure}
	\[
		\adjustbox{scale=.7,center}{\begin{tikzcd}
               & \bullet && \bullet &&&& \bullet && \star \\
               && \bullet &&&&&& \bullet \\
               \bullet && \bullet && \bullet && \star && \bullet && \bullet
               \arrow[from=1-2, to=1-4]
               \arrow[from=1-8, to=1-10]
               \arrow[from=2-3, to=1-2]
               \arrow[from=2-3, to=1-4]
               \arrow[from=2-9, to=1-8]
               \arrow[from=2-9, to=1-10]
               \arrow[from=3-1, to=1-2]
               \arrow[from=3-1, to=2-3]
               \arrow[from=3-1, to=3-3]
               \arrow[from=3-3, to=1-2]
               \arrow[from=3-3, to=1-4]
               \arrow[from=3-3, to=2-3]
               \arrow[from=3-3, to=3-5]
               \arrow[from=3-5, to=1-4]
               \arrow[from=3-5, to=2-3]
               \arrow[from=3-7, to=1-8]
               \arrow[from=3-7, to=2-9]
               \arrow[from=3-7, to=3-9]
               \arrow[from=3-9, to=1-8]
               \arrow[from=3-9, to=1-10]
               \arrow[from=3-9, to=2-9]
               \arrow[from=3-9, to=3-11]
               \arrow[from=3-11, to=1-10]
               \arrow[from=3-11, to=2-9]
		\end{tikzcd}
	\hspace{2.5cm}
		\begin{tikzcd}
               &&& \star \\
               \bullet &&& \bullet \\
               \bullet & \bullet && \bullet & \bullet \\
               \bullet & \bullet && \bullet & \bullet \\
               \bullet &&& \bullet \\
               &&& \star
               \arrow[from=2-4, to=1-4]
               \arrow[from=3-1, to=2-1]
               \arrow[from=3-4, to=2-4]
               \arrow[from=3-5, to=1-4]
               \arrow[from=4-1, to=3-1]
               \arrow[from=4-1, to=3-2]
               \arrow[from=4-2, to=3-2]
               \arrow[from=4-4, to=3-4]
               \arrow[from=4-4, to=3-5]
               \arrow[from=4-5, to=3-5]
               \arrow[from=5-1, to=4-1]
               \arrow[from=5-1, to=4-2]
               \arrow[from=5-4, to=4-4]
               \arrow[from=5-4, to=4-5]
               \arrow[from=6-4, to=5-4]
		\end{tikzcd}
		}
	\]
	\caption{An acyclic quiver and the corresponding starred quiver; the 
	Hasse diagram of a ranked poset and the starred quiver associated to its bounded extension.
	Note that the bounded extension is not ranked\label{fig:starredquiver}}
\end{figure}
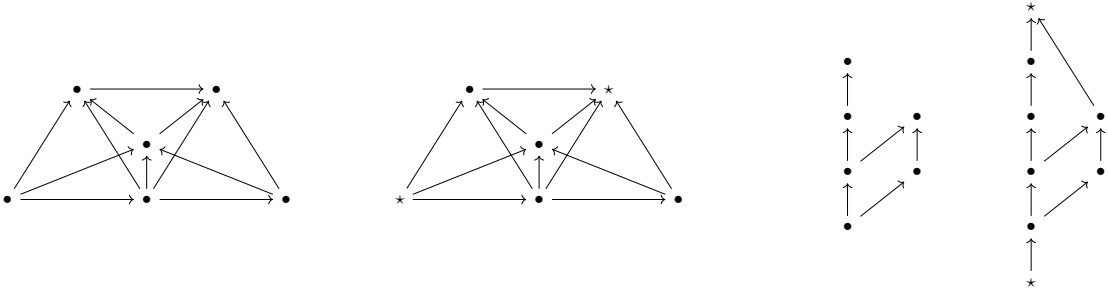

\begin{lemma}\label{lem:project}
Let $Q$ be a quiver with no starred vertices.  Choose one of its vertices, say $\vv_1$,
and let $Q_{\star}$ be the starred quiver obtained from $Q$ by replacing $\vv_1$ with a starred vertex $\star$.
Then $\NP(Q)$ is integrally equivalent to $\NP(Q_{\star})$.
\end{lemma}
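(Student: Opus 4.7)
The plan is to exhibit an explicit affine map realising the integral equivalence and then verify it preserves both the polytope and the ambient lattice. Since $Q$ has no starred vertices, every generator $u_a$ of $\NP(Q)$ is of the form $e_j - e_i$ and so has coordinate sum zero; thus $\NP(Q)$ lies in the hyperplane $H = \{x \in \R^n : \sum_{i=1}^n x_i = 0\}$. This suggests we should project away the coordinate corresponding to the vertex $\vv_1$ that we are turning into $\star$.

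Concretely, I would define the affine (in fact linear) map
\[
\phi : \R^n \longrightarrow \R^{n-1}, \qquad \phi(x_1, x_2, \ldots, x_n) = (x_2, \ldots, x_n),
\]
and verify that $\phi$ sends the generators of $\NP(Q)$ bijectively to the generators of $\NP(Q_\star)$ according to the three cases in \cref{def:NP}: for arrows $a : \vv_i \to \vv_j$ with $i, j \neq 1$ we have $\phi(e_j - e_i) = e_j - e_i$; for arrows $a : \vv_1 \to \vv_j$ we have $\phi(e_j - e_1) = e_j$, which is exactly $u_a$ once $\vv_1$ is relabelled as $\star$; and symmetrically for arrows $a : \vv_i \to \vv_1$ we get $\phi(e_1 - e_i) = -e_i$. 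Hence $\phi$ restricts to an affine bijection $\NP(Q) \to \NP(Q_\star)$.

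The remaining point is the lattice condition. Because $H$ is cut out by the integral equation $\sum_i x_i = 0$, the restriction $\phi|_H : H \to \R^{n-1}$ is an affine isomorphism whose inverse is
\[
(x_2, \ldots, x_n) \longmapsto \Bigl(-\!\sum_{i=2}^n x_i,\, x_2, \ldots, x_n\Bigr),
\]
and this inverse clearly has integral coefficients. Therefore $\phi$ induces a bijection $H \cap \Z^n \leftrightarrow \Z^{n-1}$. Intersecting with the affine spans of $\NP(Q) \subset H$ and $\NP(Q_\star) \subset \R^{n-1}$ yields a bijection $\aff(\NP(Q)) \cap \Z^n \leftrightarrow \aff(\NP(Q_\star)) \cap \Z^{n-1}$, which is exactly what is required for integral equivalence.

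There is really no serious obstacle here: once one notices that $\NP(Q)$ sits inside the integral hyperplane $H$, the map $\phi$ is forced, and every verification reduces to comparing the three cases of \cref{def:NP} on either side. The only thing to be careful about is the lattice statement, for which one must check that the inverse of $\phi|_H$ is defined over $\Z$; this is immediate from the fact that the hyperplane equation has unit coefficients.
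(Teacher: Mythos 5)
Your proof is correct and takes exactly the same approach as the paper: observe that $\NP(Q)$ lies in the hyperplane $\sum x_i = 0$, then show the coordinate projection $\phi(x_1,\dots,x_n) = (x_2,\dots,x_n)$ is the required integral equivalence. The paper simply asserts this in one sentence; you have spelled out the case-by-case check on generators and the integrality of the inverse, which are the details the paper leaves to the reader.
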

\begin{proof}
If $Q$ has normal vertices $\{\vv_1,\dots,\vv_n\}$ but no starred vertices, then 
$\NP(Q)$ lies in the hyperplane $x_1+\dots+x_n=0$.
The map $\phi: \R^n \to \R^{n-1}$ which sends $(x_1,x_2,\dots,x_n) \mapsto (x_2,\dots,x_n)$
restricts to an integral equivalence mapping $\NP(Q)$ to $\NP(Q_{\star})$.
\end{proof}

Whenever the underlying graph of $Q$ has only normal vertices $\{\vv_1,\dots,\vv_m\}$ but no starred vertices,  the root polytope lies in the hyperplane $x_1  + \dots + x_m = 0$. If there is at least one starred vertex, we have the following lemma. 

\begin{lemma}\label{l:full-dimensional}
Let $Q$ be a starred quiver with $n$ normal vertices $\{\vv_1,\dotsc,\vv_n\}$.
Then $\NP(Q)$ is full-dimensional in $\R^n$. 
\end{lemma}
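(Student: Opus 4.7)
My plan is to show that no proper affine hyperplane of $\R^n$ contains $\NP(Q)$. Suppose for contradiction that every point $u_a$ with $a \in \Arr(Q)$ satisfies $\sum_{i=1}^n c_i x_i = d$ for some fixed $(c_1, \dots, c_n, d) \neq 0$. Reading off \cref{def:NP}, this translates the arrows of $Q$ into linear constraints on these coefficients: an arrow $\vv_i \to \vv_j$ forces $c_j - c_i = d$; an arrow $\star \to \vv_j$ forces $c_j = d$; and an arrow $\vv_i \to \star$ forces $c_i = -d$.

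Next I would observe that these relations telescope along directed paths. Summing the equations along a directed path in $Q$ from a starred vertex to $\vv_i$ of length $m$ yields $c_i = m d$, while summing along a directed path from $\vv_i$ to a starred vertex of length $\ell$ yields $c_i = -\ell d$. Both kinds of paths exist for every normal vertex $\vv_i$ by the strong-connectedness hypothesis (applied in $\Qbar$) that governs this section. Equating the two expressions for $c_i$ gives $(m+\ell)\, d = 0$, and since $m+\ell \geq 2$, this forces $d = 0$.

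Once $d = 0$, the constraints reduce to $c_j = c_i$ along every arrow between normal vertices, together with $c_j = 0$ (respectively $c_i = 0$) whenever $\vv_j$ (respectively $\vv_i$) is adjacent to a starred vertex. Since the underlying graph of $Q$ is connected and has at least one arrow incident to $\star$, propagating these equalities across the graph forces $c_i = 0$ for every $i$. This contradicts $(c_1, \dots, c_n, d) \neq 0$, so no proper hyperplane contains $\NP(Q)$, and $\NP(Q)$ is full-dimensional in $\R^n$.

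The only substantive step is the telescoping of path relations, and strong-connectedness is precisely what ensures both halves of the argument apply simultaneously to force $d = 0$; beyond setting up the hyperplane-coefficient bookkeeping correctly, I do not expect any real obstacle.
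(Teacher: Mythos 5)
Your approach is a genuine variant of the paper's: you argue dually, supposing all vertices $u_a$ lie on a fixed affine hyperplane $\sum c_i x_i = d$ and forcing $(c_1,\dots,c_n,d)=0$, whereas the paper argues primally, forming the signed telescoping sum $\sum_i \varepsilon_i u_{a_i}$ along an \emph{unoriented} path from $\star$ to $\vv_i$ in the underlying graph to show each $e_i$ lies in the span of $\NP(Q)$. The dual route is clean and, if valid, directly exhibits the affine hull as all of $\R^n$.

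The problem is the step forcing $d=0$: you appeal to ``the strong-connectedness hypothesis\ldots that governs this section'' to produce an \emph{oriented} path from $\star$ to $\vv_i$ and another from $\vv_i$ to $\star$, but \cref{l:full-dimensional} does not assume strong-connectedness, and the standing assumption ``starred quivers are strongly-connected'' is introduced only later, in \cref{sec:root} after \cref{lem:face}. The paper's proof is careful to use only connectedness of the underlying graph: it takes an undirected walk and compensates for the arrow orientations with signs $\varepsilon_i\in\{\pm1\}$, so no oriented paths are ever needed. Without strong-connectedness your derivation of $d=0$ collapses; indeed, read as ``affine hull equals $\R^n$'' the statement already fails for the single-arrow quiver $\star\to\vv_1$, where $\NP(Q)=\{e_1\}$ is a point in $\R^1$. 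The paper's proof establishes that the \emph{linear} span of $\NP(Q)$ is $\R^n$ under connectedness alone, which is the version used elsewhere; your argument instead establishes the stronger affine-hull claim, but only under the stronger hypothesis that the lemma does not grant you.
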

\begin{proof}
Recall that we assume that the underlying graph of any quiver is connected and has no loops. Since $Q$ is starred, it has at least one starred vertex. We may identify the starred vertices by \cref{rem:one}, and assume that $Q$ has a unique starred vertex, that we call $\star$. 
Now for any normal vertex $\vv_i$ we have a path in the underlying graph of $\overline Q$ that starts at the starred vertex $\star$ and ends at $\vv_i$. Let $(a_1,\dotsc, a_k)$ be the associated sequence of arrows of $Q$. Consider the element of $\R^n$ given by the signed sum $\vv_\pi:=\sum_{i=1}^k \varepsilon_i u_{a_i}$ of vertices of $\NP(Q)$, where  $\varepsilon_i=1$ if $a_i$ is oriented in the direction of the path $\pi$ and $\varepsilon_i=-1$ otherwise. In terms of the standard basis of $\R^n$ we have that $\vv_{\pi}=e_i$. Thus $e_i$ lies in the span of $\NP(Q)$. This holds for all $i$ and proves that $\NP(Q)$ is full-dimensional. 
\end{proof}

\begin{lemma}\label{lem:vertexarrow}
Let $Q$ be a starred quiver 
with arrows $\Arr(Q)$
and vertices
$\{\vv_1,\dots,\vv_n\} \cup \{\star\}$.
Then each $u_a$ for $a\in \Arr(Q)$ is a vertex of $\NP(Q)$, and 
the vertices of $\NP(Q)$ are in bijection with the 
arrows $\Arr(Q)$.
\end{lemma}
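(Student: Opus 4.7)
The plan is to reduce to a purely ambient statement about the set $S := \{e_j - e_i \mid i \neq j\} \cup \{\pm e_i \mid 1 \leq i \leq n\}$, which contains every $u_a$ by construction. I will argue that every point of $S$ is a vertex of $\conv(S)$. This suffices, because $\NP(Q) = \conv\{u_a \mid a \in \Arr(Q)\} \subseteq \conv(S)$, and a point that is already extreme in the larger polytope $\conv(S)$ is automatically extreme in any sub-polytope of $\conv(S)$ containing it.

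To prove the claim, I would exhibit, for each $s \in S$, a linear functional $f_s \colon \R^n \to \R$ attaining its minimum over $S$ uniquely at $s$. For $s = e_j - e_i$, the functional $f_s(x) = x_i - x_j$ does the job, taking the value $-2$ at $s$ and at least $-1$ at every other element of $S$. For $s = e_j$, the naive choice $f_s(x) = -x_j$ is not sharp enough (it ties with $e_j - e_k$); but the weighted variant $f_s(x) = -2x_j - \sum_{k \neq j} x_k$ again attains its unique minimum $-2$ at $e_j$. The case $s = -e_j$ is handled by negating. In each case the verification is a short case check over the three families of points in $S$. The second assertion of the lemma then follows immediately: if $Q$ has no multiple arrows, then any two distinct arrows $a \neq b$ have distinct ordered source--target pairs, and \cref{def:NP} forces $u_a \neq u_b$, so $a \mapsto u_a$ is a bijection from $\Arr(Q)$ onto the vertex set of $\NP(Q)$.

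There is no deep obstacle here; the only subtlety is the choice of separating functional in the $\pm e_j$ case, where a single-coordinate functional cannot break the tie with the neighbouring $e_j - e_k$ vertices and so must be perturbed along the other coordinates. A slightly more conceptual alternative, which avoids having to guess a functional, is to show directly that no $s \in S$ can be written as a midpoint $\tfrac12(x+y)$ with $x,y \in S$ other than $x = y = s$: the constraints that each coordinate of a point of $S$ lies in $\{-1,0,1\}$ and that each such point has at most two nonzero entries combine to force this. Either route confirms that each $u_a$ is a vertex of $\NP(Q)$.
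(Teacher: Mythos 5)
Your primary argument is essentially the paper's proof: both reduce to the ambient polytope $\conv(S)$ (the paper calls it $A^{(n)}$), observe that $\NP(Q) \subseteq \conv(S)$ and that a vertex of the larger polytope remains a vertex of any sub-polytope containing it, and then exhibit an explicit separating linear functional for each point of $S$, with a small perturbation needed in the $\pm e_j$ case. Your specific functionals ($x_i - x_j$ and $-2x_j - \sum_{k\neq j}x_k$) are minor variants of the paper's (the paper uses weights like $10x_1 + x_2 + \dots + x_n$), and both verify correctly.

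One caution about the ``alternative'' route you sketch at the end: showing that no $s\in S$ can be written as $\tfrac12(x+y)$ with $x,y\in S$, $x\neq y$, is \emph{not} sufficient to conclude that $s$ is a vertex of $\conv(S)$. Being a vertex (extreme point) of $\conv(S)$ means $s$ is not a midpoint of two distinct points of $\conv(S)$, not merely of two distinct points of the \emph{generating set} $S$. For example, $(1/3,0)$ is not the midpoint of any two points of $\{(0,0),(1,0),(0,1),(1,1),(1/3,0)\}$ yet it is clearly not a vertex of the convex hull. To make the midpoint-style argument correct you would instead have to rule out all nontrivial convex combinations $s = \sum\lambda_i x_i$ with $x_i\in S\setminus\{s\}$, which is a strictly stronger statement. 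Since your primary argument via separating functionals is correct and complete, this is only a caveat about the proposed shortcut, not about the proof itself.
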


\begin{definition}\label{def:root}
Let 
$A^{(n)}:=
	\conv\{\{\pm e_1,\dots,\pm e_n\} \sqcup \{\pm (e_i - e_j) \ \vert \ 1 \leq i < j \leq n\}\} \subset 
\R^n$.  
\end{definition}
Note that  by 
\Cref{lem:project},
$A^{(n)}$ is isomorphic to the \emph{root polytope} of type $A$ in $\R^{n+1}$
(which is defined as the convex hull of all points $\pm (e_i-e_j)$, and hence
lies in the hyperplane $\sum x_i = 0$).

\begin{proof}
	[Proof of \cref{lem:vertexarrow}]
Note that each of the $(2n)+n(n-1)$ points in \cref{def:root}
is a vertex of $A^{(n)}$; 
we can easily see this by 
choosing for each  $u$ in $S$ an appropriate linear functional which is maximized at $u$.
In particular, $\pm e_1$ is a vertex because 
the linear functional $\lambda:(x_1,\dots,x_n) \mapsto \pm (10x_1 + x_2 + \dots + x_n)$ is 
maximized at $\pm e_1$,
while $\pm (e_1-e_2)$ is a vertex because the linear functional 
$\lambda:(x_1,\dots,x_n) \mapsto \pm (10x_1 -10 x_2)$ is 
	maximized at $\pm (e_1-e_2)$.

But now $\{u_a \ \vert \ a\in \Arr(Q)\}$ is a subset of the vertices of 
$A^{(n)}$, and hence 
each $u_a$ must be a vertex of $\NP(Q)$.
\end{proof}

\begin{remark}\label{r:singlestar} 
	Let $Q$ be any (possibly starred) quiver. If $Q$ has multiple starred vertices then 
we may identify all of the starred vertices to obtain a starred quiver with a single starred vertex as in \cref{rem:one}. If  $Q$ has no starred vertices then we may apply \cref{lem:project} to obtain a starred quiver with one starred vertex. In either case the root polytope of the new quiver is integrally equivalent to $\NP(Q)$. In light of this observation and \cref{l:full-dimensional}, we may at times prefer to work with quivers which have precisely one starred vertex. 
\end{remark}

\begin{proposition}\label{p:NPlatticepts}
Let $Q$ be a (possibly starred) quiver with normal vertices
	$\nV=\{\vv_1,\dots,\vv_n\}$.  Then the only  possible
	lattice points of $\NP(Q)$ are $\mathbf{0}$ and its vertices
	$u_a$ for $a\in \Arr(Q)$. 
\end{proposition}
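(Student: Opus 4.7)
The plan is to reduce to the universal polytope
$A^{(n)} = \conv\bigl(\{\pm e_i\} \cup \{\pm(e_i-e_j) : i\neq j\}\bigr)$
from \cref{def:root}, and then classify its lattice points. Since every $u_a$ lies in $A^{(n)}$ by construction, we have $\NP(Q) \subseteq A^{(n)}$, and it therefore suffices to show that $A^{(n)} \cap \Z^n$ consists of $\mathbf{0}$ together with the $n^2+n$ vertices $\pm e_i$ and $\pm(e_i-e_j)$. Once this is proved, a nonzero lattice point $p \in \NP(Q)$ would be a vertex of $A^{(n)}$; because $\NP(Q)\subseteq A^{(n)}$ and $p\in\NP(Q)$, extremality is inherited, so $p$ is a vertex of $\NP(Q)$, and hence $p = u_a$ for some $a\in\Arr(Q)$ by \cref{lem:vertexarrow}.

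To cut out the lattice points of $A^{(n)}$, I would exploit the following key observation: every vertex of $A^{(n)}$ has at most two nonzero coordinates, each equal to $\pm 1$, and when two coordinates are nonzero they have opposite signs. This means that for every subset $S \subseteq \{1,\dots,n\}$ and every vertex $v$ of $A^{(n)}$, one has $\bigl|\sum_{i\in S} v_i\bigr| \leq 1$. By convexity, the linear inequality
\[
\Bigl|\sum_{i\in S} x_i\Bigr| \leq 1
\]
then holds on all of $A^{(n)}$, for every $S$.

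Now let $p \in A^{(n)} \cap \Z^n$. Taking $S = \{i\}$ gives $p_i \in \{-1,0,1\}$. Set $S^+ := \{i : p_i = 1\}$ and $S^- := \{i : p_i = -1\}$. Applying the inequality with $S = S^+$ yields $|S^+| \leq 1$, and similarly $|S^-| \leq 1$. The remaining case analysis is immediate: if $|S^+| = |S^-| = 0$ then $p = \mathbf{0}$; if exactly one of $S^\pm$ is a singleton then $p = \pm e_i$; and if both are singletons then $p = e_i - e_j$ for distinct $i,j$. In every case $p$ is either the origin or a vertex of $A^{(n)}$, as required.

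There is no real obstacle here: once the inequality $|\sum_{i\in S} x_i| \leq 1$ is recognised as the natural vertex-limiting estimate, the rest is a short case analysis. The only subtle point is the last step, where one must remember that a vertex of $A^{(n)}$ that happens to lie in the subpolytope $\NP(Q)$ must itself be a vertex of $\NP(Q)$ (since extremality is preserved under passing to subpolytopes containing the point), so that \cref{lem:vertexarrow} can be invoked to identify it with some $u_a$, $a\in\Arr(Q)$.
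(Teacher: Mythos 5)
Your proof is correct, and it takes a genuinely different route from the paper's. The paper argues directly from a convex combination $p=\sum_a m_a u_a$, introduces the net-flow quantities $\inn(i)-\out(i)$ at each normal vertex, observes that integrality of $p$ forces each coordinate into $\{0,\pm1\}$, and then derives a contradiction by locating an arrow $a:\vv_k\to\vv_i$ with $0<m_a<1$ and showing $p_k=-m_a\notin\Z$; the argument stays entirely inside the quiver's flow-on-arrows picture. You instead enlarge $\NP(Q)$ to the universal polytope $A^{(n)}$ of \cref{def:root}, observe that the family of valid inequalities $\bigl|\sum_{i\in S}x_i\bigr|\leq 1$ (for all $S\subseteq\{1,\dots,n\}$) holds on $A^{(n)}$ by convexity since it holds at each generator, and use these to classify $A^{(n)}\cap\Z^n$ outright; the result for $\NP(Q)$ then follows because a vertex of $A^{(n)}$ lying in the subpolytope $\NP(Q)=\conv\{u_a\}$ is an extreme point of $\NP(Q)$ and therefore one of the generators $u_a$. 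What the paper's approach buys is that it stays thematically inside the in-flow/out-flow bookkeeping used throughout Section~2; what yours buys is a shorter, quiver-independent argument and, as a by-product, the full determination of $A^{(n)}\cap\Z^n$. One small note on the last step: the fact you actually need — that every vertex of $\NP(Q)$ is some $u_a$ — is the elementary statement that extreme points of a finite convex hull lie in the generating set, rather than the nontrivial content of \cref{lem:vertexarrow} (which shows the converse, that each $u_a$ is a vertex); this is a miscitation but does not affect correctness.
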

\begin{proof}
	Choose a lattice point $p=(p_1,\dots,p_n)\in \NP(Q)$.
	We can write it as 
	\begin{equation}\label{p:1}
		p=\sum_{a\in \Arr(Q)} m_a u_a,
	\end{equation}
	where each $m_a \geq 0$, and $\sum_a m_a=1$.
If we then replace each $u_a$ by its expression in terms of $e_i$'s,
we get 
	\begin{equation}\label{p:2}
		p=(p_1,\dots,p_n)=\sum_{i=1}^n (\inn(i)-\out(i)) e_i,
	\end{equation}
	where $\inn(i) = \sum_a m_a$,
	where the sum is over all arrows $a$ pointing towards
	$\vv_i$, and $\out(i)=\sum_a m_a$, where the sum is 
	over all arrows $a$ pointing away from $\vv_i$.

Assume that $p$ is not one of the vertices $u_a$. 
Then we must have 
that  $m_a <1$ for all arrows $a$.
Since $\sum_a m_a=1$, we must have 
$0 \leq \inn(i) \leq 1$ and $0 \leq \out(i) \leq 1$ for all $i$.
	Moreover if $\inn(i)=1$ (respectively, $\out(i)=1$)
	then $\out(i)=0$ (respectively, $\inn(i)=0$).
Since each coordinate $p_i\in \Z$, it then follows that 
	$p_i\in \{0,1,-1\}$ for all $1 \leq i \leq n$.

Now fix some $i$, and suppose that $p_i=1$ (the proof in 
the case where $p_i=-1$
is analogous so we omit it).  Then 
$\inn(i)=1$ and $\out(i)=0$.  Since $\sum_a m_a=1$, this implies that 
for each arrow $a$ not pointing to $\vv_i$, we have that $m_a=0$.

Since $\inn(i)=1$, we know there exists some arrow $a$ pointing to $\vv_i$ such that $m_a >0$.
If the only such arrow(s) $a$ starts from a starred vertex $\star$, then 
we have that $p$ has the form $u_a$, where $a$ is the arrow from 
$\star$ to $\vv_i$, which contradicts our assumption that $p$ 
does not have the form $u_a$.
Therefore we must have an arrow $a:\vv_k \to \vv_i$ with $m_a>0$;
recall that we also know that $m_a<1$.  But then 
	$p_k = \inn(k)-\out(k)=0-m_a \notin \Z$.  This is a contradiction.
\end{proof}

The previous proposition immediately implies the following result.
\begin{corollary}\label{cor:terminal}
Let $Q$ be a (possibly starred) quiver with normal vertices
$\nV=\{\vv_1,\dots,\vv_n\}$.  If $\mathbf{0}$ lies in the 
relative interior of $\NP(Q)$, then $\NP(Q)$ is terminal. \qed 
\end{corollary}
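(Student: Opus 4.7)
The plan is to observe that this corollary is essentially a direct translation of \cref{p:NPlatticepts} into the language of \cref{def:terminal}. \cref{p:NPlatticepts} has already identified the complete list of possible lattice points of $\NP(Q)$: they are precisely $\mathbf{0}$ together with the vertex points $u_a$ for $a \in \Arr(Q)$. By \cref{lem:vertexarrow}, each $u_a$ is actually a vertex of $\NP(Q)$ (and every vertex arises this way, once one removes duplicate arrows). So the set of lattice points of $\NP(Q)$ is contained in $\{\mathbf{0}\} \cup \{\text{vertices of } \NP(Q)\}$, which is exactly the condition on lattice points demanded by \cref{def:terminal}.

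The only remaining item is the insertion of $\mathbf{0}$ in the interior. The hypothesis gives $\mathbf{0}$ in the relative interior of $\NP(Q)$, while \cref{def:terminal} asks for $\mathbf{0}$ in the interior. When $Q$ is a starred quiver, \cref{l:full-dimensional} says $\NP(Q) \subset \R^n$ is full-dimensional, so relative interior and interior coincide. When $Q$ has no starred vertex, \cref{lem:project} allows us to reduce to the starred case via an integral equivalence, under which the relevant notions (lattice points, vertices, interior, terminality) are preserved.

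Putting these pieces together yields the corollary in one line, and there is no real obstacle — the content has all been done in \cref{p:NPlatticepts}, with \cref{lem:vertexarrow} identifying the $u_a$ as vertices and \cref{l:full-dimensional}/\cref{r:singlestar} handling the passage from relative interior to interior.
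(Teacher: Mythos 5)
Your argument is correct and follows the same route the paper intends: the paper dismisses the corollary as an immediate consequence of \cref{p:NPlatticepts} (marking it \qed with no written proof), and you are simply making explicit the two small points it leaves implicit — that each $u_a$ is genuinely a vertex (\cref{lem:vertexarrow}) and that relative interior upgrades to interior via \cref{l:full-dimensional} and \cref{lem:project}. Nothing is missing and nothing deviates from the paper's approach.
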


\begin{proposition}\label{lem:0interior}
Let $Q$ be a (possibly starred) quiver and let $Q$ be strongly-connected.
Then the polytope $\NP(Q)$
is terminal:
	in particular, it contains $\mathbf{0}$ in its relative
	interior. 
	\end{proposition}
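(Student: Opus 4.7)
The plan is to apply \cref{cor:terminal}: it suffices to show that $\mathbf{0}$ lies in the relative interior of $\NP(Q)$, i.e.\ that $\mathbf{0}$ can be written as a strictly positive convex combination of all the vertices $u_a$, $a\in \Arr(Q)$. By \cref{r:singlestar}, we may reduce to the case where $Q$ has at most one starred vertex and no multiple arrows; strong connectedness is preserved in this reduction, since by definition it is a property of $\overline{Q}$.

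The key observation is that every directed cycle in $\overline{Q}$ produces a linear relation $\sum_{a \in C} u_a = \mathbf{0}$. Indeed, for a cycle that avoids the starred vertex, say $\vv_{i_1} \to \vv_{i_2} \to \cdots \to \vv_{i_k} \to \vv_{i_1}$, the sum of the associated vectors telescopes to $(e_{i_2}-e_{i_1}) + \cdots + (e_{i_1}-e_{i_k}) = \mathbf{0}$. For a cycle of the form $\star \to \vv_{i_1} \to \cdots \to \vv_{i_k} \to \star$, the sum is $e_{i_1} + (e_{i_2}-e_{i_1}) + \cdots + (e_{i_k}-e_{i_{k-1}}) + (-e_{i_k}) = \mathbf{0}$. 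A general directed cycle in $\overline Q$ passing through $\star$ multiple times decomposes at $\star$ into pieces of the latter shape, and each contributes $\mathbf{0}$.

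Now for each arrow $a \in \Arr(Q)$, strong connectedness of $\overline{Q}$ gives a directed cycle $C_a$ containing $a$: if $a$ goes from $x$ to $y$ in $\overline{Q}$, concatenate $a$ with a directed path in $\overline{Q}$ from $y$ back to $x$. The relation above yields $\sum_{b \in C_a} u_b = \mathbf{0}$, in which the coefficient of $u_a$ is a positive integer. Summing these relations over all $a \in \Arr(Q)$ gives
\[
\mathbf{0} \;=\; \sum_{a \in \Arr(Q)} \mu_a\, u_a
\]
with every $\mu_a > 0$. Rescaling so that the coefficients sum to $1$ presents $\mathbf{0}$ as a convex combination of the vertices of $\NP(Q)$ in which every vertex appears with strictly positive weight, so $\mathbf{0}$ lies in the relative interior of $\NP(Q)$, as desired. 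The anticipated technical point — really the only one — is verifying carefully that a cycle in $\overline{Q}$ passing through $\star$ multiple times still yields the zero relation, which is handled by the decomposition mentioned above.
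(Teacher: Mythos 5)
Your proof is correct, but it takes a genuinely different route from the paper's. The paper fixes a normal vertex $\vv_h$ and uses strong connectedness to find a directed path from $\star$ to $\vv_h$ and another from $\vv_h$ to $\star$; summing the $u_a$ over the first path gives $e_h$, and over the second gives $-e_h$, so $\NP(Q)$ contains a short symmetric segment $[-\mu e_h, \mu e_h]$ in each coordinate direction, hence a neighborhood of $\mathbf{0}$. You instead work arrow by arrow: for each arrow $a$, strong connectedness produces a directed closed walk through $a$, the $u_b$ summed over any such closed walk telescope to $\mathbf{0}$, and adding these relations over all arrows exhibits $\mathbf{0}$ as a strictly positive convex combination of \emph{all} vertices $u_a$. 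Both arguments then conclude by \cref{cor:terminal}. What your route buys is that it handles the relative interior directly and uniformly, without needing the intermediate observation that $\NP(Q)$ is full-dimensional (\cref{l:full-dimensional}) in the starred case; in fact your argument works verbatim for a strongly-connected quiver with no starred vertices, whereas the paper reduces to the starred case via \cref{r:singlestar} first. What the paper's route buys is a slightly more geometric picture (a small cross around the origin) and a shorter, self-contained computation. One tiny point worth making explicit if you polish this up: closed walks in $\overline Q$ may repeat arrows, so the relation is really $\sum_b m_b\, u_b = \mathbf{0}$ with multiplicities $m_b \ge 0$ and $m_a \ge 1$; your argument is unaffected, since the aggregated coefficients remain strictly positive.
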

\begin{proof}
By \cref{r:singlestar}	we can assume that 
$Q$ 
has arrows $\Arr(Q)$
and vertices
$\V=\{\vv_1,\dots,\vv_n\} \cup \{\star\}$.
Choose any $\vv_h\in \V$, and 
let $p$ be a path from  $\star$ to $\vv_h$,
with arrows $a_1,\dots,a_k$.
Note that the sum of the corresponding 
points $u_{a_1} + \dots +u_{a_k}$ equals $e_h$,
and hence $\frac{1}{k} e_h$ lies in $\NP(Q)$.
Similarly, if we choose a path from $v_h$ to $\star$,
 with arrows $a'_1,\dots,a'_{\ell}$,
we have 
$u_{a'_1} + \dots + u_{a'_{\ell}} = -e_h$,
	and hence $-\frac{1}{\ell} e_h$ lies in $\NP(Q).$
Since $\frac{1}{k} e_h$ and
	$-\frac{1}{\ell} e_h$ lie in $\NP(Q)$, so does $\mathbf{0}$.

The above argument works for any $\vv_h\in \V$, so 
for each $h\in \{1,2,\dots,n\}$,
the interval $[-\mu e_h, \mu e_h]$ lies in $\NP(Q)$, 
for some small $\mu>0$.  This implies that $\mathbf{0}$ lies 
	in the interior of $\NP(Q)$.  And now by 
	\cref{cor:terminal}, $\NP(Q)$ is terminal.
\end{proof}

\subsection{Faces of root polytopes}

We next turn our attention to faces of $\NP(Q)$, where
$Q$ is a starred quiver.
We will start by identifying each point $u$ of $\NP(Q)$ with a 
\emph{$\bullet$-labeling}
$L: \nV \to \R$ of 
$Q$.  
(Abusing notation, we will also identify 
a $\bullet$-labeling 
$L': \nV \to \R$ with a linear functional, which we evaluate 
on points $u$ of $\NP(Q)$ by taking the dot product.)
Our main result is \cref{t:reflexive}, which characterizes
the facets of the root polytope of a strongly-connected 
starred quiver; as a consequence, we also prove that this polytope is 
reflexive. 

Note that in the case that $Q$ is an acyclic quiver, the faces of 
$\NP(Q)$ were characterized in \cite{Setiabrata}.  While we can always
associated a strongly-connected starred quiver to an acyclic quiver (cf. 
\cref{ex:acyclic}), a strongly-connected starred quiver may  have cycles.

To describe the faces and facets of $\NP(Q)\subset\R^n$ we will use two types of coordinates. 

\begin{definition}[Vertex coordinates and arrow coordinates]\label{def:coords}
Let $Q$ be a starred quiver as in \Cref{def:starredquiver}.
We define  a \emph{$\bullet$-labeling} to be a map 
$L: \nV\to \R$ from the set of normal vertices $\V_\bullet=\{\vv_1,\dots,\vv_n\}$ to $\R$;
if $L(\vv_i)=c_i$, we identify $L$ with 
the point $c_1 e_1 + \dots c_n e_n=(c_1,\dots,c_n)\in \R^n$.
And we define an \emph{arrow labeling} to be a map
$M: \Arr(Q) \to \R$.
We associate to each $\bullet$-labeling $L$ an \emph{arrow labeling}
$M_L: \Arr(Q)\to \R$ as follows:
\begin{enumerate}
\item if $a: \vv_i \to \vv_j$, $M(a)=L(\vv_j)-L(\vv_i)$;
\item if $a: \star_i \to \vv_j$, $M(a)=L(\vv_j)$; and 
\item if $a: \vv_i \to \star_j$, $M(a)=-L(\vv_i)$.
\end{enumerate}
We may think of each starred vertex as getting labeled by a $0$, i.e.
 we may extend $L$ to a labeling of $\V_{\star}$ by setting $L(\star_i)=0$,
and then Cases (2) and (3) above are special cases of (1).
\end{definition}

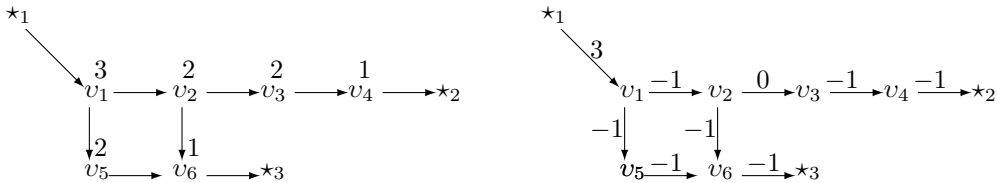
\begin{figure}[h]
\setlength{\unitlength}{1.3mm}
\begin{center}
 \begin{picture}(50,14)
         \put(0,14){$\star_1$}
	 \put(8,6){$\vv_1$}
	 \put(9,8){$3$}
         \put(17,6){$\vv_2$}
         \put(18,8){$2$}
          \put(26,6){$\vv_3$}
          \put(27,8){$2$}
         \put(35,6){$\vv_4$}
         \put(36,8){$1$}
         \put(44,6){$\star_2$}
         \put(8,-2){$\vv_5$}
         \put(9,0){$2$}
         \put(17,-2){$\vv_6$}
         \put(18.5,0){$1$}
	 \put(26,-2){$\star_3$}
         \put(2,13){{\vector(1,-1){6}}}
         \put(11,6.5){{\vector(1,0){5.5}}}
         \put(20.5, 6.5){{\vector(1,0){5.5}}}
         \put(10.5,-2){{\vector(1,0){5.5}}}
         \put(8.5,5){{\vector(0,-1){5.5}}}
         \put(18,5){{\vector(0,-1){5.5}}}
         \put(20.5,-2){{\vector(1,0){5.5}}}
         \put(29.5, 6.5){{\vector(1,0){5.5}}}
         \put(38.5, 6.5){{\vector(1,0){5.5}}}
 \end{picture}\hspace{.5cm}
 \begin{picture}(50,14)
         \put(0,14){$\star_1$}
	 \put(8,6){$\vv_1$}
	 \put(5,10){$3$}
	 \put(11,7){$-1$}
         \put(17,6){$\vv_2$}
         \put(22,7){$0$}
          \put(26,6){$\vv_3$}
          \put(29,7){$-1$}
         \put(35,6){$\vv_4$}
         \put(38,7){$-1$}
         \put(44,6){$\star_2$}
         \put(8,-2){$\vv_5$}
         \put(5,2){$-1$}
         \put(8,-2){$\vv_5$}
         \put(14.5,2){$-1$}
         \put(17,-2){$\vv_6$}
         \put(11,-1.5){$-1$}
         \put(21,-1.5){$-1$}
	 \put(26,-2){$\star_3$}
         \put(2,13){{\vector(1,-1){6}}}
         \put(11,6.5){{\vector(1,0){5.5}}}
         \put(20.5, 6.5){{\vector(1,0){5.5}}}
         \put(10.5,-2){{\vector(1,0){5.5}}}
         \put(8.5,5){{\vector(0,-1){5.5}}}
         \put(18,5){{\vector(0,-1){5.5}}}
         \put(20.5,-2){{\vector(1,0){5.5}}}
         \put(29.5, 6.5){{\vector(1,0){5.5}}}
         \put(38.5, 6.5){{\vector(1,0){5.5}}}
 \end{picture}
\end{center}
	\caption{A $\bullet$-labeling and the corresponding
	arrow labeling}
\label{fig:vertexarrow}
  \end{figure}

See \cref{fig:vertexarrow} for an example of a $\bullet$-labeling of the 
quiver $Q$ from 
\cref{fig:examplestarredquiver}, together with the corresponding 
arrow labeling. 

We now give
a simple characterisation of 
the maps $M: \Arr(Q)\to \R$ that arise from $\bullet$-labelings.   
\begin{definition}\label{def:proper}
 Let $Q$ be a (possibly starred) quiver, and let $\overline {Q}$ be the related quiver where all starred vertices are identified, see \cref{rem:one}. Let $M: \Arr(Q)\to \R$ be an arrow labeling of $Q$. We call $M$ a \emph{$0$-sum arrow labeling} if $\sum_{a\in\pi}\varepsilon_a M(a)=0$ whenever $\pi$ is a closed path in the  underlying graph of $\overline Q$, 
where $\varepsilon_a=1$ 
for arrows $a$ pointing in the direction of the path, and $\varepsilon_a=-1$ for all other arrows. 
We call $M$ \emph{non-trivial} if 
	there is at least one arrow $a$ with $M(a) \neq 0$. We use the notation $\mathbf M_{Q,\R}$ to denote the vector space of $0$-sum arrow labelings, and $\mathbf M_Q$ for its sublattice of $\Z$-valued $0$-sum arrow labelings. 
\end{definition}
We can go from a $0$-sum arrow labeling to a vertex 
labeling of $Q$. Moreover, for strongly connected quivers, there is a simpler description of $0$-sum arrow labelings.

\begin{lemma}
\label{lem:bullet-0sum}
Let $Q$ be a starred quiver. 
If $L$ is a $\bullet$-labeling of $Q$, then 
the corresponding arrow labeling $M_L$ is a $0$-sum arrow labeling. 
Conversely, 
let $M: \Arr(Q)\to \R$ be a $0$-sum arrow labeling. Then 
$M = M_L$ for some $\bullet$-labeling $L:\nV \to \R^n$.

If $Q$ is a strongly connected starred quiver, then  $M: \Arr(Q)\to \R$ is a $0$-sum arrow labeling if and only if  for each oriented path $\pi$ in $Q$ between two starred vertices, we have $\sum_{a\in \pi} M(a)=0$.
\end{lemma}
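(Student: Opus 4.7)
My plan is to treat the three assertions in turn, using a uniform bookkeeping device: after passing to $\Qbar$ (where all starred vertices are identified) and extending any $\bullet$-labeling by $L(\star):=0$, the three cases of \cref{def:coords} collapse to the single formula $M_L(a)=L(\mathrm{head}(a))-L(\mathrm{tail}(a))$. This reduces the problem to a question about labelings of the vertex set of a (connected or strongly connected) directed graph.

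For the first two assertions, if $L$ is a $\bullet$-labeling then, extending by $L(\star):=0$, for any closed walk $\pi$ in the underlying graph of $\Qbar$ based at some vertex $w$ the signed sum $\sum_{a\in\pi}\varepsilon_a M_L(a)$ telescopes to $L(w)-L(w)=0$, so $M_L$ is a $0$-sum arrow labeling. Conversely, given a $0$-sum arrow labeling $M$ on $\Qbar$, I would pick for each vertex $v$ any walk $\pi_v$ from $\star$ to $v$ in the underlying graph (possible by connectedness) and set $L(v):=\sum_{a\in\pi_v}\varepsilon_a M(a)$, with $L(\star):=0$ coming from the empty walk. This is independent of the choice of $\pi_v$: if $\pi_v'$ is another such walk then $\pi_v\cdot(\pi_v')^{-1}$ is closed, and the $0$-sum hypothesis forces the two sums to agree. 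To verify $M=M_L$, I would append any arrow $a$ with tail $u$ and head $w$ to $\pi_u$ and read off $L(w)=L(u)+M(a)$, giving $M(a)=M_L(a)$.

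For the third assertion, one direction is immediate: an oriented path $\pi$ in $Q$ between two starred vertices becomes a closed oriented walk at $\star$ in $\Qbar$, and the $0$-sum property (with all $\varepsilon_a=1$) yields $\sum_{a\in\pi}M(a)=0$. For the converse, I would assume $M$ sums to zero along every oriented path in $Q$ between two starred vertices, and use strong connectivity of $\Qbar$ to choose, for each vertex $v$, an oriented path $\sigma_v$ from $\star$ to $v$; then define $L(v):=\sum_{a\in\sigma_v}M(a)$. To check well-definedness, I would pick (again by strong connectivity) an oriented return path $\tau$ from $v$ to $\star$: for any two candidates $\sigma_v,\sigma_v'$, both $\sigma_v\cdot\tau$ and $\sigma_v'\cdot\tau$ are oriented paths between starred vertices, so both $M$-sums vanish by hypothesis, which forces the two candidate values of $L(v)$ to agree. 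The same tail-head extension argument as in the previous paragraph then gives $M=M_L$, after which the first assertion certifies that $M$ is a $0$-sum arrow labeling.

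The only genuine subtlety is the well-definedness step in the third assertion: strong connectivity is invoked exactly once, to produce the oriented return path $\tau$ that converts a comparison of two oriented $\star$-to-$v$ paths into a comparison of two oriented paths between starred vertices, which is all that the strengthened hypothesis provides. Elsewhere the proof is a routine telescoping-plus-path-sum argument.
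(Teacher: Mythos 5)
Your proposal is correct and follows essentially the same path as the paper's own proof: extend $L$ by $L(\star)=0$ so that $M_L$ telescopes along walks; for the converse, define $L(v)$ by summing $M$ along a chosen (unoriented) walk from $\star$ to $v$ and use the $0$-sum hypothesis for well-definedness; and for the strongly connected case, use an oriented return path from $v$ to a starred vertex to reduce well-definedness of $L(v)$ to the strengthened hypothesis on oriented $\star$-to-$\star$ paths. The only difference is cosmetic — you spell out the telescoping and the ``append an arrow'' verification of $M=M_L$ where the paper simply asserts these steps as straightforward.
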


\begin{proof}
If $L$ is a $\bullet$-labeling of $Q$, then 
it follows from the definitions
that the corresponding arrow labeling $M_L$ is a $0$-sum arrow labeling.

For the converse, let $M$ be a $0$-sum arrow labeling, and let $v\in \nV$. We may assume that $Q$ has a unique starred vertex $\star$ by identifying all starred vertices to one. Since the underlying graph of $Q$ is connected, as we assume throughout, we may now choose an (unoriented) path $p$ from $\star$ to $v$. Let $L(v):=\sum_{a\in p} \varepsilon_a M(a)$ where $\varepsilon_a=1$ if the arrow $a$ is pointing in the direction of the path $p$ and otherwise $\varepsilon_a=-1$. The $0$-sum condition implies that $L(v)$ depends only on the vertex $v\in\nV$ and not on the choice of the path $p$. It is straightforward that $M_L=M$.   

Now suppose $Q$ is a strongly-connected starred quiver with a $0$-sum arrow labeling $M$. Clearly, if $\pi$ is any oriented path from a starred vertex to a starred vertex, then we have $\sum_{a\in\pi}M(a)=0$. (Note that $\pi$ gives rise to a closed oriented path in $\overline Q$.) We now check the converse. Let $v\in\nV$. Since $Q$ is strongly-connected, there exists an \textit{oriented} path $p$ from a starred vertex to $v$. Let us define a vertex labeling $L$ by $L(v):=\sum_{a\in p} M(a)$.
To see that this is well-defined, consider
any other oriented path $p'$ from a (possibly different) starred vertex to $v$. Since $Q$ is strongly-connected,
we also have a path $p''$ from $v$ to a third starred vertex.
Since the concatenations of both $p$ with $p''$ and of  
$p'$ with $p''$ are paths from a starred vertex to a starred vertex, our assumption on $M$ now says that 
	$\sum_{a\in p} M(a) + \sum_{a\in p''} M(a)=0$
	and also 
	$\sum_{a\in p'} M(a) + \sum_{a\in p''} M(a)=0$. This implies that 
$\sum_{a\in p} M(a)=
	\sum_{a\in p'} M(a)$, and therefore $L$ is well-defined. Again it is straightforward that $M=M_L$. It follows from this that $M$ is a $0$-sum arrow labeling as defined in \cref{def:proper}. 
\end{proof}

\begin{remark}\label{rem:isomorphism}
The map given in \cref{lem:bullet-0sum} from the space of vertex coordinates $\R^{\nV}$ to $\mathbf M_{Q,\R}$ 
	(sending a $\bullet$-labeling $L$ to its associated $0$-sum arrow-labeling $M_L$) 
	defines a vector space isomorphism $\Psi:\R^{\nV}\to\mathbf M_{Q,\R}$ that is moreover an integral equivalence. 
This follows directly from its construction in \cref{def:coords} and the construction of the inverse in the proof of \cref{lem:bullet-0sum}. 
\end{remark}

\begin{definition}
Given a $0$-sum arrow labeling $M$ of $Q$, 
we let $M_{\min} = 
\min\{M(a) \ \vert \ a\in \Arr(Q)\}$
be the minimal arrow label occuring in $M$.
\end{definition}

\begin{lemma}\label{lem:face}
Let $Q$ be any starred quiver.
Let $M= M_L$ be a $0$-sum arrow labeling of $Q$ associated
to the $\bullet$-labeling $L\in\R^{\V_\bullet}$.
Let $F(M) = \{a\in \Arr(Q) \ \vert \ M(a)=M_{\min} \}$
be the set of arrows with label $M_{\min}$.
Consider the linear functional on 
$\NP(Q)$ defined by taking the dot product with $L$. This linear functional is minimized precisely
at the face of $\NP(Q)$ containing
vertices 
$\{u_a \ \vert \ a\in F(M)\}$; moreover 
the dot product $L \cdot u_a = M_{\min}$
for all $a\in F(M)$.
\end{lemma}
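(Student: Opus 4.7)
The plan is to show directly that for every arrow $a \in \Arr(Q)$ the identity $L \cdot u_a = M(a)$ holds. Once this is established, the statement of the lemma reduces to the standard fact that a linear functional on a polytope attains its minimum on the face spanned by the vertices at which that minimum is realised.

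First I would extend $L$ to the full vertex set $\V = \nV \sqcup \V_\star$ by setting $L(\star_j) := 0$ for every starred vertex, as already suggested at the end of \cref{def:coords}. With this convention the three cases in \cref{def:coords} collapse into the single formula $M(a) = L(h) - L(t)$ for an arrow $a$ with tail $t$ and head $h$, and the three cases in \cref{def:NP} collapse into $u_a = e_h - e_t$, where by convention $e_{\star_j} := \mathbf{0}$. A direct dot-product computation then yields $L \cdot u_a = L(h) - L(t) = M(a)$ for every arrow $a$.

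Next, since $\NP(Q) = \conv\{u_a \mid a \in \Arr(Q)\}$, any point $x \in \NP(Q)$ can be written as a convex combination $x = \sum_a m_a u_a$ with $m_a \geq 0$ and $\sum_a m_a = 1$. Applying $L$ gives $L \cdot x = \sum_a m_a M(a) \geq M_{\min}$, with equality if and only if $m_a = 0$ whenever $M(a) > M_{\min}$. Consequently the minimum of $L$ on $\NP(Q)$ equals $M_{\min}$ and is attained precisely on $\conv\{u_a \mid a \in F(M)\}$, which is the face of $\NP(Q)$ containing the vertices indexed by $F(M)$.

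There is no real obstacle here: the argument is a straightforward unpacking of definitions. The only mild subtlety is that two distinct arrows may give rise to the same vertex of $\NP(Q)$ (for instance, parallel arrows with the same orientation), but since the claim concerns the convex hull of a set of vertices rather than a bijective parametrisation between arrows and vertices, this causes no difficulty.
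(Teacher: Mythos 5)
Your proof is correct and follows essentially the same route as the paper's: verify $L \cdot u_a = M(a)$ for every arrow $a$, then conclude. The only difference is stylistic — the paper checks the three arrow types separately while you unify them via the conventions $L(\star_j) = 0$ and $e_{\star_j} = \mathbf{0}$, and you make the convex-combination step at the end a bit more explicit.
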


\begin{proof}
Let us compute the dot product of $L$ with an arbitrary 
vertex $u_a=(u_1,\dots,u_n)$ of $\NP(Q)$.
Suppose that $a:\vv_i \to \vv_j$ joins two normal 
vertices of $Q$.  Then $u_j=1$, $u_i=-1$,
and $u_h=0$ for all other $h$.
Therefore the dot product $L \cdot u_a$  of $L$ with 
$u_a$ is $L(\vv_j)-L(\vv_i) = M(a)$, compare \cref{def:coords}.(1).

Now suppose that $a:\star \to \vv_j$ joins 
a starred vertex to a normal vertex.
Then $u_j=1$ and $u_h=0$ for all other $h$. Therefore  $L\cdot u_a=L(\vv_j)=M(a)$, 
	compare \cref{def:coords}.(2).
Similarly, if $a:\vv_i \to \star$ joins
a normal vertex to a starred vertex,
we have $u_i=-1$ and $u_h=0$ for all other $h$, and
the dot product $L \cdot u_a=-L(\vv_i) = M(a)$, by \cref{def:coords}.(3).

In all cases, the dot product 
$L \cdot u_a$ of $L$ with 
	the vertex $u_a$ is $M(a) \geq M_{\min}$,
with equality attained precisely 
at the vertices 
$\{u_a \ \vert \ a\in F(M)\}$. This is a nonempty set of vertices by construction, and it defines  a face of $\NP(Q)$ because of the linear functional $L$. 
\end{proof}

We will assume throughout the remainder of this section that starred quivers are 
strongly-connected. We will also  identify all starred vertices to a single vertex 
$\star$ as in \cref{r:singlestar}.

\begin{remark}\label{min:neg}
For any $0$-sum arrow labeling $M$ of a strongly-connected starred quiver $Q$,
 we always have $M_{\min} \leq 0$,
since the sum of the arrow labels in 
each path from a starred vertex to a starred vertex is $0$.
Thus if $Q$ is strongly connected and $M$ is non-trivial, then $M_{\min} < 0$.
\end{remark}

\cref{lem:minus1} follows from 
the definition of $0$-sum arrow labeling,
 \Cref{min:neg}, and \Cref{lem:face}.
\begin{lemma}\label{lem:minus1}
Let $M$ be a nontrivial $0$-sum arrow labeling of a strongly connected starred quiver~$Q$.
	Let $M':\Arr(Q)\to \R$ be defined by 
setting $M'(a) = \frac{1}{|M_{\min}|} M(a)$
for all arrows $a \in \Arr(Q)$. Then $M'$
is a $0$-sum arrow labeling
with $M'_{\min} = -1$, and both $M$ and $M'$ are 
	minimized at the same set of vertices of $\NP(Q)$.
\end{lemma}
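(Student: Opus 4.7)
The plan is to verify the three claims in order, noting that the scaling factor $\tfrac{1}{|M_{\min}|}$ is a well-defined positive real number. By \cref{min:neg}, the hypothesis that $M$ is non-trivial together with strong-connectedness of $Q$ gives $M_{\min} < 0$, so $|M_{\min}| > 0$ and $M'$ is well-defined.

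First I would check that $M'$ is a $0$-sum arrow labeling. This is immediate from linearity: for any closed path $\pi$ in the underlying graph of $\overline{Q}$ with associated signs $\varepsilon_a \in \{\pm 1\}$,
\[
\sum_{a \in \pi} \varepsilon_a M'(a) \;=\; \frac{1}{|M_{\min}|} \sum_{a \in \pi} \varepsilon_a M(a) \;=\; 0,
\]
since $M$ satisfies the $0$-sum condition of \cref{def:proper}.

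Next, for the claim $M'_{\min} = -1$, I would observe that scaling all values of $M$ by the positive constant $\tfrac{1}{|M_{\min}|}$ preserves the argmin (i.e.\ the set of arrows attaining the minimum), and rescales the minimum value itself: $M'_{\min} = \tfrac{1}{|M_{\min}|} \cdot M_{\min} = -1$.

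Finally, since the argmin is preserved under positive scaling, we have $F(M) = F(M')$. By \cref{lem:bullet-0sum} both $M$ and $M'$ correspond to $\bullet$-labelings $L$ and $L' = \tfrac{1}{|M_{\min}|} L$ respectively, and \cref{lem:face} then identifies the minimizing face of $\NP(Q)$ under the linear functional $L$ (resp.\ $L'$) with $\{u_a \mid a \in F(M)\}$ (resp.\ $\{u_a \mid a \in F(M')\}$). Since these two sets coincide, $M$ and $M'$ are minimized at the same face of $\NP(Q)$. There is no substantive obstacle here; the lemma is a routine normalisation that will be convenient in the sequel for parametrising facets by integer-valued labelings with $M_{\min} = -1$.
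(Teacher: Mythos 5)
Your proof is correct and follows exactly the route the paper indicates: the paper gives only a one-line citation (``follows from the definition of $0$-sum arrow labeling, \cref{min:neg}, and \cref{lem:face}''), and you have simply spelled out those routine verifications — linearity for the $0$-sum condition, positivity of $1/|M_{\min}|$ for the rescaling, and \cref{lem:face} for the minimizing face. Nothing to flag.
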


\begin{definition}
We say that a $0$-sum arrow labeling $M$ of $Q$ is a 
\emph{face arrow-labeling} if 
$M_{\min} = -1$.
\end{definition}

If we want to understand faces of $\NP(Q)$,
then by \Cref{lem:minus1},
we can restrict our attention to face arrow-labelings of $Q$.

\begin{proposition}\label{prop:facet1}
Consider a face  
$F$ of $\NP(Q)$
with vertices $\{u_a | a \in S\}$
for some $S \subseteq \Arr(Q)$.
Then this face is minimized
by a linear functional $L$ such that 
$M:=M_L$ is a face arrow-labeling,
and $S = \{a\in \Arr(Q) \ \vert \ M(a)=-1\}$.
\end{proposition}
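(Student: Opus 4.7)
The plan is to use the dictionary between $\bullet$-labelings and $0$-sum arrow-labelings (\cref{lem:bullet-0sum} and \cref{rem:isomorphism}), together with the face-identification of \cref{lem:face}, and then to rescale via \cref{lem:minus1}.

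First, since $\NP(Q)$ is full-dimensional by \cref{l:full-dimensional}, any proper face $F$ of $\NP(Q)$ arises as the set of minimizers on $\NP(Q)$ of some non-zero linear functional on $\R^n$. Identifying $\R^n$ with $\R^{\nV}$, I view this linear functional as a $\bullet$-labeling $L:\nV\to\R$, which by \cref{lem:bullet-0sum} corresponds to a non-trivial $0$-sum arrow-labeling $M_L$.

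Next, I apply \cref{lem:face}: the set of minimizers of $L$ on $\NP(Q)$ is exactly $\{u_a \mid a \in F(M_L)\}$, where $F(M_L) = \{a \mid M_L(a) = (M_L)_{\min}\}$. Since the vertices of $F$ are $\{u_a \mid a \in S\}$ by hypothesis, this forces $S = F(M_L)$.

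Finally, since $Q$ is strongly-connected and $M_L$ is non-trivial, \cref{min:neg} yields $(M_L)_{\min} < 0$. By \cref{lem:minus1}, the rescaled labeling $M' := \tfrac{1}{|(M_L)_{\min}|} M_L$ is a $0$-sum arrow-labeling with $M'_{\min} = -1$ that is minimized at the same vertices of $\NP(Q)$. By \cref{rem:isomorphism} we have $M' = M_{L'}$ for the rescaled $\bullet$-labeling $L' := \tfrac{1}{|(M_L)_{\min}|} L$, and $L'$ still minimizes on $F$ (rescaling a functional by a positive constant does not change its argmin). Thus $M_{L'}$ is a face arrow-labeling with $S = \{a \in \Arr(Q) \mid M_{L'}(a) = -1\}$, as desired. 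The only small subtlety is the degenerate case $F = \NP(Q)$, which is implicitly excluded (and is irrelevant for the facet description exploited in \cref{t:reflexive}); with this aside, no real obstacle remains, since the substantive work has already been packaged into \cref{lem:face}, \cref{lem:minus1}, and \cref{lem:bullet-0sum}.
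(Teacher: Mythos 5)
Your proof is correct and follows essentially the same route as the paper: pass from a minimizing linear functional to the associated $0$-sum arrow-labeling, use strong-connectivity (via \cref{min:neg}, where the paper instead invokes \cref{lem:0interior} to get the minimal value negative), and rescale so that the minimum equals $-1$. The only difference is that you package the face-identification and rescaling through \cref{lem:face} and \cref{lem:minus1} as black boxes, whereas the paper carries out the short computation directly; this is a cosmetic difference, not a different argument.
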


\begin{proof}
Since $F$ is a face, 
it is minimized by some linear functional $L:\nV\to \R$.
Letting $r\in \R$ be the minimal value attained, 
we have that $L \cdot u_a = r$ for 
 all $a\in S$,
and $L \cdot u_a > r$ for $a\notin S$.
By \Cref{lem:0interior} we know that 
$0$ lies in the interior of $\NP(Q)$, so $L\cdot 0=0$.
Therefore $r$ must be negative; 
by multiplying all entries of $L$ by $\frac{1}{|r|}$,
we can assume that $L \cdot u_a=-1$ for $a\in S$,
and $L \cdot u_a>-1$ for $a\notin S$.

Now $L$ gives rise to a $0$-sum arrow labeling $M:=M_L$.
Consider a vertex $u_a$ with $a\in S$; suppose
$a: \vv_i \to \vv_j$.  Then $M(a) = L(\vv_j)-L(\vv_i)$
and also $-1 = L \cdot u_a = L \cdot (e_j-e_i)
= L(\vv_j) - L(\vv_i)$, which implies that $M(a)=-1$.
Similarly if $a\in S$ is an arrow connecting a 
starred vertex with a normal vertex, $M(a)=-1$.

For any $u_a$ with $a\notin S$, 
we have $L \cdot u_a > -1$, so the same argument shows 
that $M(a) > -1$.
Therefore $M$ is a face arrow-labeling.
\end{proof}

\begin{definition}\label{def:facet}
Let $Q$ be a strongly connected starred quiver. 
	We say that a face arrow-labeling $M$ of $Q$ is a 
\emph{facet arrow-labeling} if 
the set $F(M) = \{a\in \Arr(Q) \ \vert \ M(a)=-1 \}$
is maximal by inclusion among all face arrow-labelings.

In other words, a facet arrow-labeling is 
an arrow labeling  $M: \Arr(Q)\to \R_{\geq -1}$  such that:
\begin{enumerate}
\item  \label{first}
$M_{\min}=-1$, and 
for each (oriented) path $p$ in $Q$ from a starred vertex to a starred vertex,
we have $\sum_{a\in p} M(a)=0$;
\item  There is no $M': \Arr(Q) \to \R_{\geq -1}$ satisfying
	\eqref{first} 
		such that $F(M')$ properly contains $F(M)$.
\end{enumerate}
\end{definition}

\cref{fig:FacetLabels} shows the four facet arrow-labelings of a particular
starred quiver.
And the arrow labeling from the 
right of \cref{fig:vertexarrow2} is a facet arrow-labeling.

\begin{figure}[h]
\setlength{\unitlength}{1.2mm}
	\begin{picture}(25,25)(-12,0)
	 \put(-9.5,9.5){$-1$}
	 \put(9,9.5){$1$}
	 \put(-3.5,12){$-1$}
	 \put(-1.5,5){$2$}
	\color{red}
	 \put(0,16){$\bullet$}
		\qbezier(0.5,16.5)(14,28)(14,16)
	 \qbezier(14,16)(14,10)(0.5,0.5)
	 \qbezier(0.5,16.5)(-13,28)(-13,16)
	 \qbezier(-13,16)(-13,10)(0.5,0.5)
	 \put(0,0){$\star$}
	 \put(-10,9.5){{\vector(-2,2.2){1}}}
	 \put(11,9.5){{\vector(-2,-2){1}}}
	 \put(0.5,16.5){{\vector(0,-1){7.5}}}
	 \put(0.5,8.5){{\vector(0,-1){7.5}}}
         \put(0,8){$\bullet$}
		\color{black}
 \end{picture}\hspace{.5cm}
	\begin{picture}(25,20)(-12,0)
	\color{red}
	 \put(0,16){$\bullet$}
		\qbezier(0.5,16.5)(14,28)(14,16)
	 \qbezier(14,16)(14,10)(0.5,0.5)
	 \qbezier(0.5,16.5)(-13,28)(-13,16)
	 \qbezier(-13,16)(-13,10)(0.5,0.5)
	 \put(0,0){$\star$}
	 \put(-10,9.5){{\vector(-2,2.2){1}}}
	 \put(11,9.5){{\vector(-2,-2){1}}}
	 \put(0.5,16.5){{\vector(0,-1){7.5}}}
	 \put(0.5,8.5){{\vector(0,-1){7.5}}}
         \put(0,8){$\bullet$}
		\color{black}
	 \put(-9.5,9.5){$-1$}
	 \put(9,9.5){$1$}
	 \put(-1.5,12){$2$}
	 \put(-3.5,5){$-1$}
 \end{picture}\hspace{.5cm}
	\begin{picture}(25,20)(-12,0)
	 \put(-9.5,9.5){$1$}
	 \put(7,9.5){$-1$}
	 \put(-3.5,12){$-1$}
	 \put(-1.5,5){$0$}
	\color{red}
	 \put(0,16){$\bullet$}
		\qbezier(0.5,16.5)(14,28)(14,16)
	 \qbezier(14,16)(14,10)(0.5,0.5)
	 \qbezier(0.5,16.5)(-13,28)(-13,16)
	 \qbezier(-13,16)(-13,10)(0.5,0.5)
	 \put(0,0){$\star$}
	 \put(-10,9.5){{\vector(-2,2.2){1}}}
	 \put(11,9.5){{\vector(-2,-2){1}}}
	 \put(0.5,16.5){{\vector(0,-1){7.5}}}
	 \put(0.5,8.5){{\vector(0,-1){7.5}}}
         \put(0,8){$\bullet$}
		\color{black}
 \end{picture}\hspace{.5cm}
	\begin{picture}(25,20)(-12,0)
	 \put(-9.5,9.5){$1$}
	 \put(7,9.5){$-1$}
	 \put(11,9.5){{\vector(-2,-2){1}}}
	 \put(0.5,16.5){{\vector(0,-1){7.5}}}
	 \put(-1.5,12){$0$}
	 \put(0.5,8.5){{\vector(0,-1){7.5}}}
	 \put(-3.5,5){$-1$}
	\color{red}
	 \put(0,16){$\bullet$}
		\qbezier(0.5,16.5)(14,28)(14,16)
	 \qbezier(14,16)(14,10)(0.5,0.5)
	 \qbezier(0.5,16.5)(-13,28)(-13,16)
	 \qbezier(-13,16)(-13,10)(0.5,0.5)
	 \put(0,0){$\star$}
	 \put(-10,9.5){{\vector(-2,2.2){1}}}
	 \put(11,9.5){{\vector(-2,-2){1}}}
	 \put(0.5,16.5){{\vector(0,-1){7.5}}}
	 \put(0.5,8.5){{\vector(0,-1){7.5}}}
         \put(0,8){$\bullet$}
		\color{black}
 \end{picture}
	\caption{A starred quiver and its
	four facet arrow-labelings.}
\label{fig:FacetLabels}
  \end{figure}
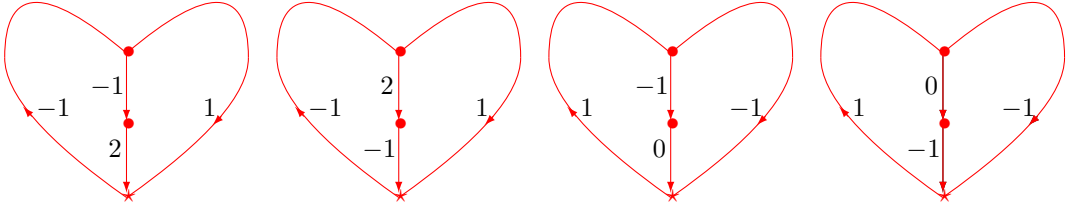

\begin{corollary}\label{cor:facets}
Every facet $F$ of $\NP(Q)$ is 
defined by (minimized by) a linear functional $L$ such that 
$M_L$ is a facet arrow-labeling; 
moreover, the vertices of $F$ are precisely the 
vertices $\{u_a \ \vert \ M_L(a) = -1\}$.
\end{corollary}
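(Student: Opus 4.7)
The plan is to bootstrap this corollary out of \cref{prop:facet1} by a short maximality argument. For any face $F$ of $\NP(Q)$, \cref{prop:facet1} already delivers a $\bullet$-labeling $L$ for which $M_L$ is a face arrow-labeling, with $F(M_L)$ equal to the set of arrows indexing the vertices of $F$; in particular this already yields the second assertion of the corollary. It therefore suffices, in the case that $F$ is a facet, to upgrade ``face arrow-labeling'' to ``facet arrow-labeling'', i.e.\ to show that $F(M_L)$ is maximal among sets of the form $F(M')$ for face arrow-labelings $M'$.

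I would argue by contradiction. Suppose there were a face arrow-labeling $M'$ with $F(M') \supsetneq F(M_L)$; by \cref{lem:bullet-0sum} we may realize $M' = M_{L'}$ for some $\bullet$-labeling $L'$. Then \cref{lem:face} tells us that $L'$ is minimized on $\NP(Q)$ precisely at the face $F'$ with vertex set $\{u_a \mid a \in F(M')\}$. Since faces of a polytope are determined by their vertex sets, the strict inclusion $F(M_L) \subsetneq F(M')$ lifts to a strict inclusion $F \subsetneq F'$. This will contradict the maximality of $F$ as a proper face, \emph{provided} we verify that $F'$ is itself a proper face, i.e.\ that $F(M') \neq \Arr(Q)$.

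This properness check is the one point that needs genuine input, and it is where strong connectedness of $Q$ enters. Since $Q$ has at least one normal vertex $\vv_i$, there is an oriented path in $Q$ from a starred vertex through $\vv_i$ back to a starred vertex, and such a path has strictly positive length. If every arrow had $M'$-label $-1$, the $0$-sum condition from \cref{lem:bullet-0sum} applied to this path would give $-(\text{length of path}) = 0$, which is impossible. Hence $F(M') \subsetneq \Arr(Q)$, so $F'$ is a proper face of $\NP(Q)$ strictly containing $F$, giving the desired contradiction and showing that $M_L$ is a facet arrow-labeling. I expect this last step --- extracting properness of $F'$ from strong connectedness --- to be the only nontrivial move; everything else is simply unpacking the dictionary between $\bullet$-labelings, $0$-sum arrow-labelings, and faces set up in \cref{def:coords}, \cref{lem:bullet-0sum}, and \cref{lem:face}.
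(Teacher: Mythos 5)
Your proof is correct and follows the same maximality-by-contradiction strategy as the paper: both bootstrap from \cref{prop:facet1} and argue that if $M_L$ were not a facet arrow-labeling, some face arrow-labeling $M'$ with $F(M')\supsetneq F(M_L)$ would exhibit a face of $\NP(Q)$ strictly containing the facet $F$. The paper states this in one terse sentence and does not explicitly verify that the comparison face $F'$ is proper; you correctly flag this as the one place needing genuine input, and your path-based argument (an oriented path from a starred vertex through a normal vertex to a starred vertex has length $\geq 2$, so the $0$-sum condition forces some $M'$-label $> -1$) closes the gap. A slightly shorter route to the same properness fact is available and is what makes the paper's brevity tenable: by \cref{lem:0interior} the origin lies in the interior of $\NP(Q)$, and since $L'\cdot\mathbf{0}=0>-1=M'_{\min}$, the origin is not on the minimizing face, which must therefore be proper. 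Either version uses strong connectedness (yours directly to build the oriented path, the alternative via \cref{lem:0interior}), so the hypotheses are used in the same essential way; yours simply makes explicit a detail the paper treats as understood.
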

\begin{proof}
The fact that $M_L$ is a face arrow-labeling and that $F$ has vertices
 $\{u_a \ \vert \ M_L(a) = -1\}$ comes from 
\Cref{prop:facet1}. Additionally, $M_L$ must be a facet arrow-labeling, 
i.e. the set $F(M_L) = \{a\in \Arr(Q) \ \vert \ M_L(a)=-1 \}$
must be maximal by inclusion among all face arrow-labelings, because otherwise there would 
be another face arrow-labeling $M$ defining a face of $\NP(Q)$
whose vertices are a superset of the vertices of $F$.
\end{proof}

Our next goal is to show that facet arrow-labelings 
 $M:\Arr(Q) \to \R_{\geq -1}$ take on only integer values.  
 This requires a little preparation.

\begin{definition}\label{def:component}
Given a facet arrow-labeling $M:\Arr(Q) \to \R_{\geq -1}$,
we define a \emph{facet component} of $Q$ (with respect to $M$)
to be a connected subquiver $C$ of $Q$ in which 
$M(a)=-1$ for all $a\in \Arr(C)$, and such that $C$ is maximal 
by inclusion with this property.  
\end{definition}
Note that every normal vertex 
and every starred vertex of $Q$ is part of some facet component
of $Q$ (with respect to $M$);
note also that a facet component can be made up of only one single vertex
and no arrows.

\begin{lemma}\label{lem:containstar}
Let $M:\Arr(Q) \to \R_{\geq -1}$ be a facet arrow-labeling.
Then each facet component of $Q$ (with respect to $M$)
contains some starred vertex.
\end{lemma}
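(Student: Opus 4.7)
The plan is to argue by contradiction. Suppose some facet component $C$ of $Q$ (with respect to $M$) contains no starred vertex. By \cref{lem:bullet-0sum} I write $M = M_L$ for some $\bullet$-labeling $L\in\R^{\V_\bullet}$, and I will construct a face arrow-labeling $M' = M_{L'}$ with $F(M')\supsetneq F(M)$, contradicting the maximality condition in \cref{def:facet}.

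The construction perturbs $L$ by adding a small $\epsilon>0$ to $L(\vv)$ for every $\vv\in C$ while leaving the other values fixed, giving a new $\bullet$-labeling $L'$. I will sort the arrows of $Q$ into four categories according to their relation to $C$: (i) arrows internal to $C$, (ii) arrows disjoint from $C$, (iii) arrows with tail in $C$ and head outside $C$ (call this set $A^-(C)$), and (iv) arrows with head in $C$ and tail outside. Direct computation from \cref{def:coords} shows that in case (i) the label stays at $-1$, in case (ii) it is unchanged, in case (iii) it decreases by $\epsilon$, and in case (iv) it increases by $\epsilon$. The hypothesis that $C$ contains no starred vertex is crucial in case (i): if $C$ contained some $\star$, the shift would change the labels on arrows between $\star$ and the normal vertices of $C$ (since the starred vertex's ``value'' is treated as fixed at $0$), breaking case (i).

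Next I will observe that by maximality of $C$ as a connected all-$(-1)$ subquiver, every arrow with exactly one endpoint in $C$ has $M$-label strictly greater than $-1$: otherwise such an arrow together with its far endpoint could be adjoined to $C$. Strong-connectedness of $Q$ then guarantees $A^-(C)\neq\emptyset$: starting from any vertex of $C$, a directed path to a starred vertex must eventually leave $C$, and its first departing arrow lies in $A^-(C)$. Setting $\epsilon := \min_{a\in A^-(C)}(M(a)+1)>0$, the perturbed labeling $M' = M_{L'}$ satisfies $M'\geq -1$ everywhere, still attains $-1$ on every arrow of $F(M)$ by case (i), and newly attains $-1$ on whichever arrow of $A^-(C)$ realizes the minimum. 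Hence $M'$ is a face arrow-labeling with $F(M')\supsetneq F(M)$, the desired contradiction.

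The main obstacle I anticipate is simply keeping the case analysis honest, in particular recognizing \emph{why} the no-starred-vertex hypothesis is exactly what is needed to make case (i) work, and why strong-connectedness (not merely connectedness of the underlying graph) is needed to guarantee that $A^-(C)$ is nonempty. Once these two points are in place, the rest is a routine perturbation.
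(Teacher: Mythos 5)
Your proof is correct and is essentially the same perturbation argument as in the paper. The only small difference is that the paper chooses the minimum-label arrow over \emph{all} arrows adjacent to $C$ and perturbs $L$ on $C$ in whichever direction brings that arrow to $-1$ (so connectedness of the underlying graph suffices), whereas you always shift the values of $L$ on $C$ upward and therefore invoke strong-connectedness to ensure $A^-(C)\neq\emptyset$.
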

\begin{proof}
Let $L$ be the $\bullet$-labeling of $Q$ such that $M_L = M$.
Suppose that there is a facet component $C$ with no starred vertex.
Consider all arrows $a$ of $Q$ which are \emph{adjacent} to $C$
(i.e. such that $a \notin \Arr(C)$ but $a$ shares a vertex with 
$C$), and choose the arrow $\tilde{a}$ such that 
$M(\tilde{a})$ is minimal.
We will use $\tilde{a}$ to produce a new $\bullet$-labeling $L'$.

Let $\ell:=M(\tilde{a})$. Clearly $\ell > -1$.
Suppose without loss of generality that 
$\tilde{a}$ points towards $C$ (the case where $\tilde{a}$
points away from $C$ is similar).
Then for each vertex $v$ of $C$, 
set $L'(v) = L(v) - (\ell+1)$.
For all other vertices $v$ of $Q$, we set $L'(v) = L(v)$.
Now let us consider how $M':=M_{L'}$ differs from $M=M_L$.
Clearly the only arrows $a$ in which $M'(a) \neq M(a)$
are those arrows $a$ which are adjacent to $C$.
Among these arrows:
\begin{itemize}
	\item if $a$ points towards $C$, then $M'(a) = M(a)-(\ell+1)$.
	\item if $a$ points away from $C$, then $M'(a) = M(a)+(\ell+1)$.
\end{itemize}
By our assumptions, each arrow $a$ adjacent to $C$ satisfies
$M(a) \geq \ell > -1$, and hence $M'(a) \geq -1$.
Moreover, $M'(\tilde{a}) = -1$.

In summary,  we have produced a new  $M'$ whose
set of arrows labeled by $-1$ strictly contains the set of arrows labeled 
by $-1$ in $M$. So $M$ is not a facet arrow-labeling, which is a contradiction.
\end{proof}

\begin{proposition}\label{prop:integral}
Let $M:\Arr(Q) \to \R_{\geq -1}$
be a facet arrow-labeling. 
Then  $M$ must be integral: 
for each $a\in \Arr(Q)$, $M(a)\in \Z$.
\end{proposition}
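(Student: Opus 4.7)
The plan is to leverage \Cref{lem:containstar} together with the fact that $M=M_L$ for some $\bullet$-labeling $L:\V_{\bullet}\to\R$ (by \Cref{lem:bullet-0sum} applied to the $0$-sum arrow-labeling $M$). Since we have identified all starred vertices to a single vertex $\star$, the natural extension of $L$ sets $L(\star)=0$. The key observation is that if I can show $L(v)\in\Z$ for every normal vertex $v$, then $M(a)\in\Z$ for every arrow $a$, since each $M(a)$ is a difference of $L$-values (with the convention $L(\star)=0$).

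First I would pick an arbitrary normal vertex $v\in\V_{\bullet}$ and locate a facet component $C$ of $Q$ with respect to $M$ containing $v$. By \Cref{lem:containstar}, $C$ contains a starred vertex; after identification, this means $C$ contains $\star$. Since $C$ is a connected subquiver, I can choose an unoriented path $v=w_0,w_1,\dots,w_k=\star$ in $C$ whose edges are arrows $a_1,\dots,a_k\in\Arr(C)$ with $M(a_i)=-1$ for each $i$.

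Next I would compute $L(v)$ by telescoping along this path. For each arrow $a_i$ between $w_{i-1}$ and $w_i$, \Cref{def:coords} gives $L(w_i)-L(w_{i-1})=\pm M(a_i)=\pm 1$ (with the sign depending on whether $a_i$ points from $w_{i-1}$ to $w_i$ or vice versa, and with the convention $L(\star)=0$ handling the case where one endpoint is starred). Summing these identities from $i=1$ to $k$ and using $L(\star)=0$ gives $L(v)=\sum_{i=1}^k \varepsilon_i\cdot(-1)\in\Z$ for appropriate signs $\varepsilon_i\in\{\pm 1\}$.

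Since $v$ was arbitrary, $L$ takes integer values on all of $\V_{\bullet}$, and so $M(a)\in\Z$ for every $a\in\Arr(Q)$, completing the proof. There is no real obstacle here: the only subtlety is the bookkeeping with signs (arrow orientations) when telescoping along the path in $C$, and the conceptual point that \Cref{lem:containstar} is exactly the anchor result we need in order to pin the $\bullet$-labeling to the integer value $0$ at a starred vertex.
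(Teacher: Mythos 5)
Your proof is correct and takes essentially the same approach as the paper: both invoke \Cref{lem:containstar} to ensure each facet component contains a starred vertex (where $L$ is pinned to $0$), then propagate integrality along paths of $M(a)=-1$ arrows within the component. You spell out the telescoping computation more explicitly than the paper, which simply cites the rules of \Cref{def:coords}, but the content is the same.
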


\begin{proof}
Let $M:\Arr(Q) \to \R_{\geq -1}$
be a facet arrow-labeling.  Write $M = M_L$ where $L$ is the corresponding
$\bullet$-labeling.
Let $C$ be a facet component
of $Q$
(with respect to $M$).
By \Cref{lem:containstar}, $C$
contains a starred vertex.
Now, using rules (1), (2), (3) of 
\Cref{def:coords}, and the fact that each arrow $a\in \Arr(C)$
satisfies $M(a)=-1$, we see that 
for each vertex $v$ of $C$, we must have $L(v)\in \Z$.
Since every vertex of $Q$ lies in some facet component,
we must have $L(v) \in \Z$ for \emph{all} vertices
of $Q$.   Therefore $M_L(a) \in \Z$ for all arrows of $Q$.
\end{proof}

The following is the main result of this section.
While this result is stated for strongly-connected starred quivers, 
by \cref{lem:project}, it also applies to strongly-connected quivers.
\begin{theorem}\label{t:reflexive}
Let $Q$ be a strongly-connected starred quiver with 
{normal vertices}
$\nV=\{\vv_1,\dots,\vv_n\}$, and consider the 
corresponding polytope $\NP(Q)$.
Then the facets of $\NP(Q)$ are in bijection with the $\bullet$-labelings 
$L: \nV \to \Z_{\geq -1}$ such that $M_L$ is a facet arrow-labeling.  Moreover, the facet inequality
is $$\{L(\vv_1) x_1 + L(\vv_2) x_2 + \dots + L(\vv_n) x_n \geq -1\},$$
and $\NP(Q)$ is cut out by the union of these inqualities.
Finally, 
 the polytope $\NP(Q)$ is reflexive.
\end{theorem}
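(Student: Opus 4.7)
The plan is to assemble the preceding results---\cref{cor:facets}, \cref{lem:face}, \cref{prop:integral}, and \cref{rem:isomorphism}---to establish the bijection and the facet inequality, and then to deduce reflexivity from the integrality of the polar dual.

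First I would construct the bijection between facets of $\NP(Q)$ and the $\bullet$-labelings $L$ for which $M_L$ is a facet arrow-labeling. By \cref{cor:facets}, each facet $F$ arises as the minimum locus of some such $L$, with vertex set $\{u_a : M_L(a) = -1\}$. Conversely, given $L$ with $M_L$ a facet arrow-labeling, \cref{lem:face} shows that $\conv\{u_a : M_L(a) = -1\}$ is a face of $\NP(Q)$; the facet-maximality condition in \cref{def:facet} rules out this face being lower-dimensional, since any strictly larger face would be defined by a face arrow-labeling $M'$ with $F(M')\supsetneq F(M_L)$. Bijectivity then follows from the integral isomorphism $\Psi$ of \cref{rem:isomorphism}, which recovers $L$ uniquely from $M_L$.

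Second, I would read off the facet inequality and verify its integrality. By \cref{lem:face}, $L\cdot u \geq M_{L,\min} = -1$ for all $u\in\NP(Q)$, with equality precisely on $F$; written out in coordinates this is the claimed $L(\vv_1)x_1+\dotsb+L(\vv_n)x_n\geq -1$. That $L$ is integer-valued follows from \cref{prop:integral} (integrality of $M_L$) via the inverse integral equivalence $\Psi^{-1}$ of \cref{rem:isomorphism}. That $\NP(Q)$ is cut out by these facet inequalities is then the standard fact that any full-dimensional polytope is the intersection of its facet half-spaces, using \cref{l:full-dimensional} for full-dimensionality and \cref{lem:0interior} to ensure the origin is interior. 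Reflexivity falls out directly: the vertices of the polar dual $\NP(Q)^*$ are exactly the integer functionals $L$ appearing in the facet inequalities $L\cdot x \geq -1$, so $\NP(Q)^*$ is a lattice polytope and $\NP(Q)$ is reflexive.

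The main technical obstacle I anticipate lies in justifying the coordinate-wise lower bound $L(\vv_i)\geq -1$ claimed in the theorem, which is not immediately forced by the arrow-wise condition $M_L(a)\geq -1$ when $\vv_i$ has no direct arrow to or from a starred vertex. I expect to resolve this by exploiting facet-maximality of $M_L$ together with strong connectivity of $Q$, in the same spirit as the proof of \cref{lem:containstar}: if $L(\vv_i)<-1$ for some $i$, one should be able to modify $L$ on a judiciously chosen subquiver to produce a new face arrow-labeling whose $-1$-set strictly enlarges $F(M_L)$, contradicting facet-maximality. Everything else follows essentially mechanically from the lemmas already established.
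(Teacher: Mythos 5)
Your core argument matches the paper's almost exactly: \cref{cor:facets} gives each facet an $L$ with $M_L$ a facet arrow-labeling, \cref{lem:face} yields the inequality $L(\vv_1) x_1 + \dots + L(\vv_n) x_n \geq -1$ with vertex set $\{u_a : M_L(a) = -1\}$, \cref{prop:integral} gives integrality, and polar duality together with \cref{lem:0interior} gives reflexivity. You are in fact slightly more careful than the paper about bijectivity --- the paper's proof does not explicitly argue that every facet arrow-labeling arises from a facet --- but your added argument via maximality and \cref{prop:facet1} is sound.

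The one genuine problem is the ``technical obstacle'' you flag at the end. The coordinate-wise bound $L(\vv_i) \geq -1$ that appears in the theorem statement (and is copied, with $\R_{\geq -1}$ in place of $\Z_{\geq -1}$, into the paper's own proof) is a typo and is simply false: only the arrow labels $M_L(a)$ are bounded below by $-1$, not the vertex labels $L(\vv_i)$. For the quiver of \cref{fig:examplestarredquiver}, row~14 of \cref{tab:inequalities} gives a facet with $L = (-1,-2,-3,-4,-2,1)$, so $L(\vv_4) = -4$. The perturbation you propose in the spirit of \cref{lem:containstar} therefore cannot succeed --- there is no face arrow-labeling whose $-1$-set strictly enlarges $F(M_L)$, because $M_L$ is already maximal --- and if it did succeed it would produce a contradiction to the above example. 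The fix is not to prove the bound but to drop it: replace $\Z_{\geq -1}$ by $\Z$, leaving the $\geq -1$ constraint where it belongs, on $M_L$. None of the paper's subsequent logic uses the erroneous bound on $L$, so once you discard this phantom obstacle your proof is complete and coincides with the paper's.
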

\begin{figure}[h]
\setlength{\unitlength}{1.3mm}
\begin{center}
 \begin{picture}(50,14)
         \put(0,14){$\star_1$}
	 \put(8,6){$\vv_1$}
	 \put(9,8){$-1$}
         \put(17,6){$\vv_2$}
         \put(18,8){$-2$}
          \put(26,6){$\vv_3$}
          \put(27,8){$-3$}
         \put(35,6){$\vv_4$}
         \put(36,8){$1$}
         \put(44,6){$\star_2$}
         \put(8,-2){$\vv_5$}
         \put(9,0){$-2$}
         \put(17,-2){$\vv_6$}
         \put(18.5,0){$-3$}
	 \put(26,-2){$\star_3$}
         \put(2,13){{\vector(1,-1){6}}}
         \put(11,6.5){{\vector(1,0){5.5}}}
         \put(20.5, 6.5){{\vector(1,0){5.5}}}
         \put(10.5,-2){{\vector(1,0){5.5}}}
         \put(8.5,5){{\vector(0,-1){5.5}}}
         \put(18,5){{\vector(0,-1){5.5}}}
         \put(20.5,-2){{\vector(1,0){5.5}}}
         \put(29.5, 6.5){{\vector(1,0){5.5}}}
         \put(38.5, 6.5){{\vector(1,0){5.5}}}
 \end{picture}\hspace{.5cm}
 \begin{picture}(50,14)
         \put(0,14){$\star_1$}
	 \put(8,6){$\vv_1$}
	 \put(5,10){$-1$}
	 \put(11,7){$-1$}
         \put(17,6){$\vv_2$}
         \put(22,7){$-1$}
          \put(26,6){$\vv_3$}
          \put(30,7){$4$}
         \put(35,6){$\vv_4$}
         \put(38,7){$-1$}
         \put(44,6){$\star_2$}
         \put(8,-2){$\vv_5$}
         \put(5,2){$-1$}
         \put(8,-2){$\vv_5$}
         \put(14.5,2){$-1$}
         \put(17,-2){$\vv_6$}
         \put(11,-1.5){$-1$}
         \put(22,-1.5){$3$}
	 \put(26,-2){$\star_3$}
         \put(2,13){{\vector(1,-1){6}}}
         \put(11,6.5){{\vector(1,0){5.5}}}
         \put(20.5, 6.5){{\vector(1,0){5.5}}}
         \put(10.5,-2){{\vector(1,0){5.5}}}
         \put(8.5,5){{\vector(0,-1){5.5}}}
         \put(18,5){{\vector(0,-1){5.5}}}
         \put(20.5,-2){{\vector(1,0){5.5}}}
         \put(29.5, 6.5){{\vector(1,0){5.5}}}
         \put(38.5, 6.5){{\vector(1,0){5.5}}}
 \end{picture}
\end{center}
	\caption{A $\bullet$-labeling and the corresponding
	arrow labeling}
\label{fig:vertexarrow2}
 \end{figure}
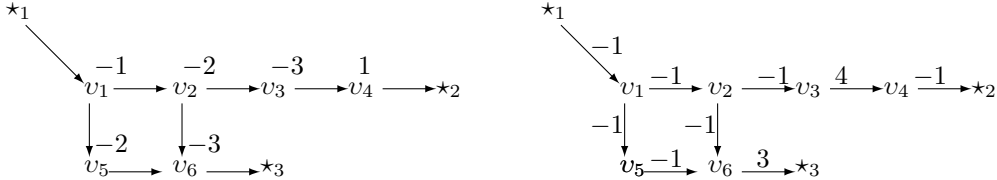

\begin{example}
Consider again the starred quiver from 
	\cref{fig:examplestarredquiver}.
The polytope defined there has facets 
	given in 
	\cref{tab:inequalities}, where e.g. the first 
	row in the table encodes the facet inequality
	$$1+3x_1 + 2x_2 + 2x_3 + x_4+2x_5 + x_6 \geq 0$$
	(which corresponds to the $\bullet$-labeling and arrow labeling in 
	\cref{fig:vertexarrow})
	and the fourth row in the table encodes the facet inequality
	$$1-x_1 - 2x_2 -3x_3 + x_4-2x_5 -3 x_6 \geq 0$$
	(which corresponds to 
the $\bullet$-labeling and arrow labeling 
	in 
	\cref{fig:vertexarrow2}).
\begin{center}
\begin{table}[h]
\begin{tabular}{| p{1cm}| p{1cm} | p{1cm} | p{1cm} | p{1cm} | p{1cm} | p{1cm}| }
\hline
	  & $x_1$ & $x_2$ & $x_3$ & $x_4$ & $x_5$ & $x_6$ \\ \hline
	1 & 3 & 2 & 2 & 1 & 2 & 1\\ \hline
	1 & 3 & 2 & 1 & 0 & 2 & 1\\ \hline
	1 & 3 & 2 & 1 & 1 & 2 & 1\\ \hline
	1 & -1 & -2 & -3 & 1 & -2 & -3 \\ \hline
	1 & -1 & -2 & -3 & 1 & 2 & 1 \\ \hline
	1 & -1 & 2 & 1 & 1 & 2 & 1 \\ \hline
	1 & -1 & 2 & 1 & 1 & -2 & 1 \\ \hline
	1 & -1 & -2 & -3 & 1 & -2 & 1 \\ \hline
	1 & -1 & -2 & 2 & 1 & -2 & 1 \\ \hline
	1 & -1 & 2 & 2 & 1 & -2 & 1 \\ \hline
	1 & -1 & 2 & 2 & 1 & 2 & 1 \\ \hline
	1 & -1 & -2 & 2 & 1 & 2 & 1 \\ \hline
	1 & -1 & -2 & 2 & 1 & -2 & -3 \\ \hline
	1 & -1 & -2 & -3 & -4 & -2 & 1 \\ \hline
	1 & -1 & 2 & 1 & 0 & -2 & 1 \\ \hline
	1 & -1 & 2 & 1 & 0 & 2 & 1 \\ \hline
	1 & -1 & -2 & -3 & -4 & 2 & 1 \\ \hline
	1 & -1 & -2 & -3 & -4 & -2 & -3 \\ \hline
			\end{tabular}
			\caption{\label{tab:inequalities}The inequalities defining  the polytope
			$\NP(Q)$ where $Q$ is the quiver from
	\cref{fig:examplestarredquiver} and 
\cref{fig:vertexarrow2}}
		\end{table}
	\end{center}
\end{example}

\begin{remark}\label{r:TU}
The statement that $\NP(Q)$ is reflexive also appears in \cite[Proposition 4.2]{Higashitani}, which 
additionally says that $\NP(Q)$ is terminal.
\cite{Higashitani} does not provide a proof but asserts it can be done
similarly to  \cite[Proposition 3.2]{Matsui} (which proved the reflexive
property in the case of symmetric directed graphs)
and  \cite[Lemma 1.2]{Ohsugi}
(which discusses when the origin lies in the interior). 
Another approach to reflexivity is to 
observe that the matrix whose rows encode
the coordinates of the vertices of $\NP(Q)$ is a
	\textit{totally unimodular} matrix (an integer matrix all of whose minors are $\pm 1$ or $0$) \cite{GH}, see also \cite[Theorem 29]{Kaibel}.
By a version of the Hoffman-Kruskal Theorem \cite{HoffmanKruskal, Schrijver},
	this implies that the polar dual of $\NP(Q)$, if it is nonempty, is integral, 
\end{remark}

\begin{proof}[Proof of 
	\cref{t:reflexive}]
By \Cref{cor:facets}, each facet $F$ of $\NP(Q)$ is minimized by a linear functional
$L:\nV\to \R_{\geq -1}$ such that $M_L$ is a facet arrow-labeling, and the 
vertices of $F$ are precisely the vertices $\{u_a \ \vert \ M_L(a)=-1\}$.
Let $\mathcal{L}$ be the set of linear functionals corresponding to facets of $\NP(Q)$.
Note that for $L\in \mathcal{L}$,
$L \cdot u_a = M_L(a)$, so the inequality cutting out this facet is 
$L \cdot (x_1,\dots,x_n) \geq -1$, i.e. 
$\{L(\vv_1) x_1 + L(\vv_2) x_2 + \dots + L(\vv_n) x_n \geq -1\}.$
And by \Cref{prop:integral}, each $L(\vv_i)\in \Z$.

Therefore, using the correspondence  between
the $i$-dimensional faces of a polytope and the
$(n-i-1)$-dimensional faces of its polar dual, we
see that the vertices of the polar dual $\NP(Q)^*$ are 
the points $\{(L(\vv_1),\dots,L(\vv_n)) \ \vert \ L\in \mathcal{L}\}$.
In particular, they are integral.
We already showed in 
\Cref{lem:0interior}
 that the origin is in the interior of $\NP(Q)$,
so $\NP(Q)$ is reflexive.
\end{proof}

\section{The planar setting: relation to flow polytopes}\label{sec:planar}

Let $Q$ be a connected quiver with no oriented cycles.  
One can associate to each quiver a reflexive polytope called 
a \emph{flow polytope}, as shown in \cite{Hille}.
Flow polytopes are an interesting class of polytopes that 
are closely connected to toric geometry, as we explain in \cref{sec:toric}, as well as
 representation theory \cite{Baldoni} and 
gentle algebras \cite{gentle}.
There has also been a great deal of work 
on their volumes \cite{Baldoni, Meszaros2, MM, CKM, GHMY}
and faces \cite{AltmannvanStraten}. 
In this section we show that if $Q$ 
is a strongly-connected  quiver 
with a unique starred vertex which is planar, 
then 
its root polytope $\NP(Q)$ is polar dual to the flow 
polytope of the dual quiver.
This gives a new proof that the flow polytope associated to a plane acyclic quiver is reflexive. 

\begin{definition}\label{def:flow}
Let $Q=(Q_0, Q_1)$ be a connected quiver which is 
\emph{acyclic}, that is, it has no  oriented cycles.  
We denote 
elements of $\R^{Q_1}$ by $(r_a)_{a\in Q_1}$.
We let $V_Q$ 
	be the subspace of $\R^{Q_1}$
	obtained by imposing for each vertex $q\in Q_0$ the 
	relation 
	\begin{equation}\label{e:VQ}\sum_{a \rightarrow q} r_a = \sum_{b \leftarrow q} r_b,
	\end{equation}
where the first sum is over all arrows with target $q$,
and the second sum is over all arrows with source $q$.
Then the \emph{flow polytope} $\Fl_Q$
is defined to be $$\Fl_Q = \{(r_a)_{a \in Q_1} \ \vert \ r_a \geq -1\} \subset V_Q.$$
\end{definition}
The flow polytope as defined above is also known as the canonical weight flow polytope. See \cref{rem:defflow} for a comparison to the original definition of \cite{Hille}.

A \emph{plane quiver} is a quiver which is properly embedded into the plane $\R^2$, 
that is, no two arrows cross each other.

\begin{definition}\label{def:dual}
Let $Q=(Q_0,Q_1)$ be an acyclic connected plane quiver.
  Let 
	$Q_2$ be the set of bounded regions of 
	$\R^2 \setminus Q$.
	We construct the \emph{dual starred quiver} 
	$Q^{\vee}=(Q_{\bullet}^{\vee} \sqcup \{\star\}, Q_1^{\vee})$
by placing one normal vertex in each bounded region of $\R^2\setminus Q$
	so that $Q_{\bullet}^{\vee} \cong Q_2$,
	and placing a starred vertex $\star$ in the unbounded region
	of $\R^2\setminus Q$.
	For each arrow $q\in Q_1$ separating two regions of 
	$\R^2 \setminus Q$, we also place a dual arrow 
	$a^{\vee}\in Q_1^{\vee}$ which crosses the original arrow 
	$a$ from ``left to right,'' see \cref{fig:dual}.
\end{definition}
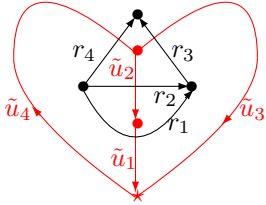
\begin{figure}[h]
\setlength{\unitlength}{1.2mm}\begin{center}
	\begin{picture}(25,20)(-12,0)
	\color{red}
	 \put(0,16){$\bullet$}
		\qbezier(0.5,16.5)(14,28)(14,16)
	 \qbezier(14,16)(14,10)(0.5,0.5)
	 \qbezier(0.5,16.5)(-13,28)(-13,16)
	 \qbezier(-13,16)(-13,10)(0.5,0.5)
	 \put(0,0){$\star$}
	 \put(-10,9.5){{\vector(-2,2.2){1}}}
	 \put(11,9.5){{\vector(-2,-2){1}}}
	 \put(0.5,16.5){{\vector(0,-1){7.5}}}
	 \put(0.5,8.5){{\vector(0,-1){7.5}}}
         \put(0,8){$\bullet$}
	 \put(-14,9.5){$\uu_4$}
	 \put(12,9.5){$\uu_3$}
	 \put(-2.5,14){$\uu_2$}
	 \put(-2.3,4){$\uu_1$}
		  \color{black}
		\put(0,20){$\bullet$}
		\put(-6,12){$\bullet$}
		\put(6,12){$\bullet$}
		\put(-5.5,12.5){{\vector(6,8){6}}}
		\put(6.8,12.5){{\vector(-6,8){6}}}
		\put(-5.5,12.8){{\vector(1,0){12}}}
		\qbezier(-5.5,12.8)(0,2)(6.5,12)
		\put(6,11.5){{\vector(1,1){1}}}
	 \put(-6.5,16){$r_4$}
	 \put(4.5,16){$r_3$}
	 \put(2.5,11){$r_2$}
	 \put(4,8){$r_1$}
 \end{picture}\hspace{.5cm}\end{center}
	\caption{A plane acyclic quiver $Q$ (in black, with arrows
	labeled $r_i$) and the dual starred quiver
	$Q^{\vee}$ (in red, with arrows labeled $\uu_i$). 
	In $V_Q$ 
	(see \cref{def:flow})
	an element
	$(r_1,r_2,r_3,r_4)$ satisfies $r_1+r_2=r_3$, 
	$r_1+r_2+r_4=0$, and $r_3+r_4=0$, with one relation for each vertex
	of $Q$.
	In $\mathbf N_{Q^{\vee}, \R}$ 
	(see \cref{def:NNP})
	the elements $\uu_1,\uu_2,\uu_3,\uu_4$ 
	satisfy $\uu_1+\uu_2-\uu_3=0$, $\uu_1+\uu_2+\uu_4=0$, and 
	$\uu_3+\uu_4=0$,
	with one relation for each cycle of $Q^{\vee}$}
	\label{fig:dual}
\end{figure}

The following lemma is easy to check and well-known.
	\begin{lemma}\label{lem:bijection}
The map in \cref{def:dual} gives a bijection between 
acyclic connected plane quivers and 
		strongly-connected
		plane starred  quivers which have a unique starred 
vertex. 
\end{lemma}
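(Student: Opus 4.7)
The plan is to verify that the construction in \cref{def:dual} lands in the target class, to describe its inverse, and to prove the key equivalence ``$Q$ acyclic if and only if $Q^\vee$ strongly-connected,'' which is a classical planar-graph duality between directed cycles and directed cuts. The structural conditions on $Q^\vee$ are immediate: by construction it is plane and has a unique starred vertex (placed in the unbounded face of $\R^2 \setminus Q$), and its underlying undirected graph is the usual planar dual of the underlying graph of $Q$, which is connected whenever $Q$ is.

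The main step is the equivalence
\[
Q \text{ is acyclic} \quad\Longleftrightarrow\quad Q^\vee \text{ is strongly-connected}.
\]
Recall that a finite directed graph is strongly-connected if and only if it has no non-trivial \emph{directed cut}, that is, no bipartition $V = A \sqcup B$ into non-empty parts with every cross-arrow going from $A$ to $B$. For the forward direction, a directed cycle $C$ in $Q$ separates $\R^2$ into two regions by the Jordan curve theorem, and thereby partitions $V(Q^\vee)$ into subsets $A$ and $B$; the arrows of $Q^\vee$ with one endpoint in each subset are exactly the duals of the arrows of $C$, and the ``left to right'' rule of \cref{def:dual} together with the consistent orientation of $C$ forces all such dual arrows to cross $C$ in the same direction, giving a directed cut. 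For the converse, given a directed cut $A \sqcup B$ of $Q^\vee$, let $R_A$ and $R_B$ be the unions of closures of the faces of the plane embedding of $Q^\vee$ corresponding to vertices in $A$ and $B$, respectively. The common boundary $\partial R_A$ is a non-empty union of primal arrows; orienting $\partial R_A$ so as to keep $R_A$ on the left, each primal arrow of the boundary is traversed along its own orientation (because its dual crosses from the $A$-side to the $B$-side, i.e.\ left to right by \cref{def:dual}), and hence $\partial R_A$ is a disjoint union of directed cycles in $Q$.

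For the bijection itself, the inverse is the analogous dual construction applied to $Q^\vee$: place a vertex of $Q$ in each face of the plane embedding of $Q^\vee$ and, for each arrow of $Q^\vee$ separating two faces, draw a primal arrow crossing it from right to left. Planar duality is an involution, so this recovers $Q$ from $Q^\vee$, and the equivalence above shows that both constructions restrict to the two claimed classes. I expect the main obstacle to be the topological boundary argument in the converse direction above: formalising ``orient $\partial R_A$ with $R_A$ on its left'' requires some care when the boundary passes through a vertex of $Q$ more than once, which is handled locally at each vertex using the cyclic order of incident arrows supplied by the plane embedding.
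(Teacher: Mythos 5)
The paper does not actually give a proof of this lemma — it is stated as ``easy to check and well-known'' — so there is no in-paper argument to compare against. Your argument is correct and is the standard one: it reduces the statement to the classical planar duality between directed cycles in $Q$ and directed cuts in $Q^\vee$, combined with the elementary fact that a finite connected directed graph is strongly connected if and only if it admits no non-trivial directed cut. A few small notes. The phrase ``faces of the plane embedding of $Q^\vee$ corresponding to vertices in $A$'' should read ``faces of the plane embedding of $Q$,'' since it is the faces of $Q$ that carry the vertices of $Q^\vee$ (faces of $Q^\vee$ dualize back to vertices of $Q$). Your observation that the inverse construction must cross arrows from right to left is correct and worth stating explicitly: applying the left-to-right dual twice returns the opposite quiver $Q^{\mathrm{op}}$ rather than $Q$, so the inverse is the left-to-right dual composed with arrow reversal, which is precisely the right-to-left dual. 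The caveat you raise about $\partial R_A$ passing through a vertex of $Q$ more than once is exactly the point that needs care, and resolving it with the rotation system (pairing boundary edges around each maximal run of $A$-faces at the vertex, keeping $R_A$ on the left) is correct; note also that even if the resulting closed walks revisit vertices, any non-trivial directed closed walk contains a directed cycle, which is all one needs to contradict acyclicity. Finally, both the lemma and your proof implicitly exclude some degenerate cases (for instance a bridge of $Q$ would give rise to a loop in $Q^\vee$, which the paper's definition of a starred quiver does not accommodate), but the paper treats these informally as well, so this is not a defect of your argument.
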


The following is the main result of this section.
\begin{theorem}\label{thm:dual}
Let $Q=(Q_0,Q_1)$ be a plane acyclic connected quiver
	and let $Q^{\vee}$ be the dual starred quiver, as in 
	\cref{def:dual}. 
	Then the root polytope $\NP(Q^{\vee})$ 
	is integrally equivalent to the polar dual of the flow polytope
	$\Fl_Q$.
\end{theorem}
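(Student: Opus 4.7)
The plan is to build an integral equivalence between $\NP(Q^{\vee})^*$ and $\Fl_Q$ by threading two canonical identifications: the lattice isomorphism $\Psi$ of \cref{rem:isomorphism} sending a $\bullet$-labeling $L$ of $Q^{\vee}$ to its associated $0$-sum arrow labeling $M_L$, and the obvious bijection $a \leftrightarrow a^{\vee}$ between $Q_1$ and $\Arr(Q^{\vee})$. The key observation driving the argument is a planar-duality statement: under the latter identification, the subspace $\mathbf{M}_{Q^{\vee}, \R} \subset \R^{\Arr(Q^{\vee})}$ of $0$-sum arrow labelings coincides with the flow subspace $V_Q \subset \R^{Q_1}$.

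First, since $0$ lies in the interior of $\NP(Q^{\vee})$ by \cref{lem:0interior}, its polar dual is
\[
\NP(Q^{\vee})^* = \{y \in \R^n \mid u_a \cdot y \geq -1 \text{ for all } a \in \Arr(Q^{\vee})\}.
\]
Identifying $y=(y_1,\dots,y_n)$ with the $\bullet$-labeling $L\colon \vv_i \mapsto y_i$, the computation in the proof of \cref{lem:face} gives $u_a \cdot y = M_L(a)$. Combining this with \cref{rem:isomorphism} yields an integral equivalence
\[
\Psi\colon \NP(Q^{\vee})^* \;\longrightarrow\; \{M \in \mathbf{M}_{Q^{\vee}, \R} \mid M(a) \geq -1 \text{ for all } a \in \Arr(Q^{\vee})\}.
\]
I then transport the target to $\R^{Q_1}$ via the lattice isomorphism $r_a \leftrightarrow M(a^{\vee})$, which sends the inequalities $M(a) \geq -1$ to the defining inequalities $r_a \geq -1$ of $\Fl_Q$. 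It remains only to show that this isomorphism carries $\mathbf{M}_{Q^{\vee}, \R}$ onto $V_Q$.

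This last step is the crux. Because $Q^{\vee}$ is plane and connected, the cycle space of the underlying graph of $\overline{Q^{\vee}}$ is generated by the boundaries of its bounded faces; by planar duality these faces are in bijection with the vertices of $Q$, so it suffices to check the identification of the two defining linear conditions for one generating cycle per vertex $q \in Q_0$. A local sign analysis around $q$, using the convention that $a^{\vee}$ crosses $a$ from left to right, shows that the counterclockwise dual cycle around $q$ traverses $a^{\vee}$ in its own direction exactly when $a$ is incoming at $q$, so the $0$-sum condition for this cycle becomes precisely the flow conservation equation \eqref{e:VQ} at $q$. The main obstacle is pinning these sign conventions down cleanly: once that is done (with \cref{fig:dual}, where $r_1+r_2=r_3$ corresponds to $\uu_1+\uu_2-\uu_3=0$, providing a sanity check), composing the two integral equivalences produces the desired map $\Fl_Q \to \NP(Q^{\vee})^*$.
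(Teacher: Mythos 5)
Your proposal is correct and takes essentially the same route as the paper: both arguments reduce to the lattice isomorphism $\Psi$ between $\bullet$-labelings and $0$-sum arrow labelings, identify the polar dual with the set $\{M \in \mathbf{M}_{Q^{\vee},\R} \mid M(a) \geq -1\}$, and then use the bijection $a \leftrightarrow a^{\vee}$ together with the fact that the $0$-sum conditions around bounded faces of $Q^{\vee}$ correspond to the flow conservation relations \eqref{e:VQ} at vertices of $Q$. The only cosmetic difference is that the paper first replaces $\NP(Q^{\vee})$ with $\NNP(Q^{\vee})$ via \cref{prop:NNP} and then dualizes, whereas you dualize $\NP(Q^{\vee})$ directly and then apply $\Psi$; these steps commute, so the arguments are the same.
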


\begin{corollary}
Let $Q$ be a connected plane acyclic quiver.  Then
$\Fl_Q$ is reflexive. 
\end{corollary}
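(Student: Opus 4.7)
The plan is to derive the corollary as a direct consequence of \cref{thm:dual} combined with \cref{t:reflexive} and \cref{lem:bijection}, using the standard fact that reflexivity is preserved under taking polar duals. Since $Q$ is a connected plane acyclic quiver, \cref{lem:bijection} tells us that $Q^{\vee}$ is a strongly-connected plane starred quiver with a unique starred vertex. Therefore \cref{t:reflexive} applies to $Q^{\vee}$, so $\NP(Q^{\vee})$ is reflexive. By \cref{thm:dual}, $\NP(Q^{\vee})$ is integrally equivalent to $(\Fl_Q)^{*}$, and since reflexivity is invariant under integral equivalence, $(\Fl_Q)^{*}$ is itself reflexive.

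It remains to conclude that $\Fl_Q$ is reflexive. First I would verify that $\mathbf{0} \in V_Q$ lies in the relative interior of $\Fl_Q$, which is immediate since the defining inequalities $r_a \geq -1$ are all strictly satisfied at the origin; this ensures the polar dual $(\Fl_Q)^{*}$ is well defined. Then I would invoke the involutivity of the polar dual operation on polytopes containing the origin in their interior, namely $((\Fl_Q)^{*})^{*} = \Fl_Q$. Since the polar dual of a reflexive polytope is again reflexive (this is the defining symmetric property of reflexivity: both $\PP$ and $\PP^{*}$ are lattice polytopes), we conclude that $\Fl_Q$ is reflexive.

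There is no real obstacle here; the only mild point to check carefully is that the notion of ``reflexive'' used in the paper (which, as noted after \cref{def:reflexive}, extends to polytopes merely integrally equivalent to a reflexive polytope) is symmetric under polar duality. This follows because an integral equivalence $\phi:\PP_1\to \PP_2$ of full-dimensional lattice polytopes containing the origin in their interior induces, via the dual linear map, an integral equivalence $(\PP_2)^{*}\to (\PP_1)^{*}$ of their polar duals; hence reflexivity of one of $\PP, \PP^{*}$ forces reflexivity of the other. Applied to $\PP = \Fl_Q$ and the identification $\PP^{*} \cong \NP(Q^{\vee})$, this completes the argument.
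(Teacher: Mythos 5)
Your argument is correct and is essentially the paper's proof, just spelled out in more detail: the paper simply cites Theorem~\ref{thm:dual} together with Theorem~\ref{t:reflexive}, leaving the routine checks about polar duality and integral equivalence implicit, all of which you correctly supply.
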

\begin{proof}
	This follows from \cref{thm:dual} and 
	\cref{t:reflexive}.
\end{proof}

The corollary reproves in the plane case the result of
 \cite{Hille} that (canonical weight) flow polytopes are reflexive. 

Before proving \cref{thm:dual}, we start by giving a description of 
the root polytope of a starred quiver in terms of arrow coordinates.

\begin{definition}\label{def:NNP}
Let $Q$ be a starred quiver with vertices 
$\nV \sqcup \V_{\star}$ 
(where $\nV=\{\vv_1,\dots,\vv_n\}$ 
	and $\V_{\star}=\{\star_1,\dots,\star_{\ell}\}$),
and arrows $Q_1=\Arr(Q)$. 
	Let 
	$\mathbf N_{Q,\R} = 
	\R^{Q_1}/U$ where, in terms of the standard basis $\{\delta_a\}$ of $\R^{Q_1}$,
\begin{equation}\label{eq:cycle}
U=\langle	\sum_{a\in \pi} \varepsilon_a \delta_a\mid \pi \text{ an undirected path that is closed, or begins and ends in a starred vertex} \rangle_\R.
\end{equation} 
 Here 
$\varepsilon_a=1$ for arrows $a\in Q$ pointing in the direction of the path $\pi$ and $-1$ for all other arrows.  

	Note that $\mathbf N_{Q,\R}$ is dual to the vector space of $0$-sum arrow-labelings $\mathbf M_{Q,\R}$ as constructed in \cref{def:proper}. 
	In particular, the dual of the $\Z$-lattice $\mathbf M_Q\subset\mathbf M_{Q,\R}$ is a $\Z$-lattice in $\mathbf N_{Q,\R}$ that we may  denote by $\mathbf N_Q$.
	This gives $\mathbf N_{Q,\R}$ an integral structure.

	For each $a\in Q_1$, let $\uu_a$ denote the image of $\delta_a$ in $\mathbf N_{Q,\R} = \R^{Q_1}/U$. We define 
	$$\NNP(Q):= \conv(\{\uu_a \ \vert \ a\in Q_1\}) \subseteq \mathbf N_{Q,\R}.$$
\end{definition}
Compare the above definition with \cref{def:NP}.

\begin{example}
	If we consider the `dual starred quiver' $Q^{\vee}$
	shown in \cref{fig:dual}, 
	then the elements $\uu_1,\uu_2,\uu_3,\uu_4\in 
	\mathbf N_{Q^{\vee},\R}$ 
	associated to the arrows 
	in $Q^\vee_1$  
	satisfy the relations $\uu_1+\uu_2-\uu_3=0$, $\uu_1+\uu_2+\uu_4=0$, and $\uu_3+\uu_4=0$.\end{example}

	\begin{proposition}\label{prop:NNP}
Let $Q$ be a starred quiver as in \cref{def:NNP}.  
The lattice polytopes $\NNP(Q)$ and $\NP(Q)$ are integrally equivalent. 
\end{proposition}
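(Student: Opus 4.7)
The plan is to construct a linear map $\bar\phi \colon \mathbf N_{Q,\R} \to \R^n$ sending $\uu_a \mapsto u_a$ for every arrow $a$, and then to verify that it is an integral equivalence by identifying it as the transpose of the integral equivalence $\Psi$ from \cref{rem:isomorphism}. By \cref{r:singlestar} we may assume $Q$ has a single starred vertex, so that $\NP(Q)$ is full-dimensional in $\R^n$ by \cref{l:full-dimensional}; the case with no starred vertices will then follow from \cref{lem:project}.

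First I would define $\phi \colon \R^{Q_1} \to \R^n$ on the standard basis by $\phi(\delta_a) = u_a$, and show that $U \subseteq \ker \phi$. Given an (undirected) path $\pi$ in $\overline Q$ that is either closed, or else lifts to a path in $Q$ running between two starred vertices, I need to show $\sum_{a \in \pi} \varepsilon_a u_a = 0$. This is a telescoping argument: at any intermediate normal vertex $v$ of the path, the two arrows $a_i, a_{i+1}$ incident to $v$ contribute a combined coefficient of $0$ to $e_v$, because a case analysis on whether each arrow agrees or disagrees with the path orientation shows that $a_i$ contributes $+1$ and $a_{i+1}$ contributes $-1$. The endpoints either coincide (closed case), where the same cancellation applies, or are starred vertices, where no $e_v$ contribution arises. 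So $\phi$ descends to $\bar\phi \colon \mathbf N_{Q,\R} \to \R^n$.

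Next I would identify $\bar\phi$ with $\Psi^\ast$. Recall that $\Psi \colon \R^{\V_\bullet} \to \mathbf M_{Q,\R}$ sends a $\bullet$-labeling $L$ to the $0$-sum arrow labeling $M_L$, and $\mathbf N_{Q,\R}$ is dual to $\mathbf M_{Q,\R}$ with $\uu_a$ the functional $M \mapsto M(a)$. The key computation is exactly \cref{lem:face}: for every $L \in \R^{\V_\bullet}$ and every $a \in Q_1$,
\[
L \cdot \bar\phi(\uu_a) \;=\; L \cdot u_a \;=\; M_L(a) \;=\; \langle \uu_a, \Psi(L)\rangle,
\]
so $\bar\phi = \Psi^\ast$ after the canonical identification of $(\R^n)^\ast$ with $\R^n$. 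Since $\Psi$ is an integral equivalence by \cref{rem:isomorphism}, so is its transpose $\bar\phi$; in particular $\bar\phi(\mathbf N_Q) = \Z^n$.

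Finally, because $\bar\phi$ is a lattice-preserving linear isomorphism with $\bar\phi(\uu_a) = u_a$ for all $a$, it restricts to an integral equivalence $\NNP(Q) \to \NP(Q)$. The only non-formal step is the telescoping verification that $U \subseteq \ker \phi$; everything else is a duality bookkeeping exercise built on \cref{lem:face} and \cref{rem:isomorphism}.
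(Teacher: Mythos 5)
Your proof is correct and takes essentially the same approach as the paper's: both identify the desired integral equivalence as the dual isomorphism $\Psi^*$ from \cref{rem:isomorphism}, and both verify $\Psi^*(\uu_a) = u_a$ via the duality computation $\langle L, \Psi^*(\uu_a)\rangle = M_L(a) = L \cdot u_a$ coming from \cref{lem:face}. Your preliminary telescoping check that $U \subseteq \ker\phi$ is a harmless but superfluous step, since once you identify $\bar\phi$ with $\Psi^*$ the well-definedness of the map on the quotient $\mathbf N_{Q,\R} = \R^{Q_1}/U$ comes for free from duality.
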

\begin{proof}  
Recall the isomorphism
	$\Psi:\R^{\nV}\to\mathbf M_{Q,\R}$ 
	from \cref{rem:isomorphism}
	sending a $\bullet$-labeling $L$ to its associated $0$-sum arrow-labeling $M_L$. 
Let us write $(\R^{\nV})^*=\R^n$; this is the vector space containing $\NP(Q)$.  Recall that we paired 
	$L\in\R^{\nV}$ (viewed as an element of $\R^n$) 
	with $u_a\in\R^n$ using the dot product, $\langle L,u_a\rangle=L\cdot u_a$, and we had $L\cdot u_a=M_L(a)$.  Thus  we have
\[\langle L,\Psi^*(\tilde u_a)\rangle=\langle\Psi(L),\tilde u_a\rangle=\langle M_L,\tilde u_a\rangle=M_L(a)=\langle L, u_a\rangle,
\]
for any $L\in\R^{\nV}$. The dual isomorphism $\Psi^*:\mathbf N_{Q,\R}\to \R^n$ therefore  sends the vertex $\tilde u_a$ of $\NNP(Q)$ to the vertex $u_a$ of $\NP(Q)$. Thus it defines an integral equivalence between $\NNP(Q)$ and  $\NP(Q)$.
\end{proof}

\begin{remark}
For any strongly-connected starred quiver $Q$, the polar dual of the `root polytope' $\NNP(Q)\subset \mathbf N_{Q,\R}$ is precisely the convex hull of the set of facet arrow-labelings of $Q$ (which are indeed lattice points in the dual space $\mathbf M_{Q,\R}$). This is the translation of \cref{t:reflexive} from $\NP(Q)$ to $\NNP(Q)$.  
\end{remark}

\begin{proof}[Proof of \cref{thm:dual}]
	We know from \cref{prop:NNP} that 
	$\NP(Q^{\vee})$ is integrally equivalent to the polytope 
	$\PP:=
	\NNP(Q^{\vee})
	=\conv(\{\uu_{\bar a} \ \vert \ \bar a\in Q_1^{\vee} \})$ in $\mathbf N_{Q^{\vee}, \R}=\mathbf N_{Q,\R}$.
	The polar dual 
	$\PP^*$ of $\PP$ is  
	$$\{r\in (\mathbf N_{Q^{\vee},\R})^*  \mid \langle r, x\rangle \geq -1	\text{ for all }x\in \PP\} = 	\{r\in \mathbf M_{Q^{\vee},\R}  \mid \langle r, x\rangle \geq -1	\text{ for each vertex }x \text{ of }\PP\}.$$ 
Thus we get one inequality of $\PP^*$ for each vertex $\uu_{\bar a}$ of $\PP$, and this inequality is $\langle r,\tilde u_{\bar a}\rangle \geq -1$.  In other words the elements of $\PP^*$ are $0$-sum arrow labelings $(r_{\bar a})_{\bar a\in Q_1^{\vee}}$ satisfying	$r_{\bar a} \geq -1$ for all arrows $\bar a\in Q^\vee_1$.

Let us refer to a closed path in the underlying graph of $Q^{\vee}$ that bounds a connected region of $\R^2\setminus Q^\vee$ as a `minimal' closed path for $Q^\vee$. Then the $0$-sum conditions for $(r_{\bar a})_{\bar a\in Q^\vee_1}$ 
coming from such minimal closed paths generate all of the $0$-sum conditions of $\mathbf M_{Q^\vee,\R}$ from \cref{def:proper}. 
To complete the proof, note that we obtain an isomorphism from $\mathbf M_{Q^\vee,\R}$  to $V_Q$ by the map sending $(r_{\bar a})_{\bar a\in Q^\vee_1}$ to $(r_a)_{a\in Q_1}$ (where  $r_{\bar a}=r_a$ if $a$ is the arrow in $Q_1$ dual to the arrow $\bar a\in Q^\vee_1$). Namely, the $0$-sum conditions for $(r_{\bar a})_{\bar a\in Q^\vee_1}$  
coming from minimal closed paths translate precisely to the relations \eqref{e:VQ} 
for $(r_a)_{a\in Q_1}$ that define $V_Q$. This isomorphism is clearly an integral equivalence, and moreover  
the inequalities $r_{\bar a}\ge -1$ describing $\PP^*$ translate to the inequalities $r_a\ge -1$ defining $\Fl_Q$. Thus $\Fl_Q$ is integrally equivalent to the polar dual of $\NP(Q^\vee)$.
\end{proof}

\begin{remark} \label{rem:defflow}
The original definition of flow polytope (associated to the 
\emph{canonical weight} $\theta = \theta^c$) from \cite{Hille} looks slightly different
but is easily seen to be equivalent to \Cref{def:flow}.
Following \cite{Hille}, 
let $Q=(Q_0,Q_1)$ be a connected quiver with no oriented cycles, and 
for each vertex $q$,  set
$$\theta(q):=\#\{ \text{arrows with source }q\}-\#\{\text{arrows with 
target }q\}.$$
We denote elements of $\R^{Q_1}$ by $(R_a)_{a\in Q_1}$.
Let $V_Q$ be the subspace  of $\R^{Q_1}$
obtained by imposing for each vertex $q\in Q_0$ the 
	relation $$\theta(q) + \sum_{a \rightarrow q} R_a = \sum_{b \leftarrow q} R_b.$$
We then define the 
flow polytope $\Delta(Q)$ as 
$$\Delta(Q)= \{(R_a)_{a\in Q_1} \ \vert \  R_a \geq 0\} - \mathbf{1} \subset V_Q,$$
	where $\mathbf{1}$ is the all-one vector.
	If we set $r_a = R_a-1$, we see that $\Fl_Q = \Delta(Q)$.
\end{remark}

\begin{remark} 
\cref{thm:dual} does not extend to the non-planar case; 
it fails already for quivers obtained from the complete bipartite
graph $K_{3,3}$, where the polar dual of the flow polytope is no longer a root polytope. 
\end{remark}

\section{The ranked poset setting: relation to (marked) order polytopes}
\label{sec:markedorder}

In this section we specialize to the setting where our starred quiver $Q$
comes from a poset.  We will prove \cref{mainthm:poset} and  
\cref{thm:D}, relating root polytopes to marked order polytopes and order polytopes.

\subsection{Background on order polytopes and marked order polytopes}

\emph{Order polytopes} associated to finite posets were 
investigated in 
 \cite{order}.
Given a poset $P=\{\vv_1,\dots,\vv_n\}$ (where we identify $P$ with its 
set of points), we let $\R^P$ denote the set of 
all functions $f: P\to \R$.
	Recall that $\hat{P}$ denotes the \emph{bounded extension} of $P$, obtained
by adjoining a new minimum element $\hat{0}$ and a new maximum element
$\hat{1}$.  

\begin{definition}
	The \emph{order polytope} $\mathcal{O}(P)$ of the poset $P$
	is the subset of $\R^P$ defined by the conditions 
	\begin{align*}
		0 \leq f(\vv_i) \leq 1 \qquad & \text{ for all }\vv_i\in P\\
		f(\vv_i) \leq f(\vv_j) \qquad & \text{ if }\vv_i\leq \vv_j \text{ in }P.
	\end{align*}
	
	Equivalently, we can use $\hat{P}$ and 
	define the polytope $\hat{\mathcal{O}}(P)$ to be the
	set of functions $g\in \R^{\hat{P}}$ satisfying
	\begin{align*}
		&g(\hat{0}) = 0 \text{ and }g(\hat{1}) = 1\\
		&g(\vv_i) \leq g(\vv_j)  \text{ if }\vv_i\leq \vv_j \text{ in }\hat{P}.
	\end{align*}
	The linear map $\hat{\mathcal{O}}(P) \to \mathcal{O}(P)$ obtained
by projection to $\R^P$ is a bijection.
\end{definition}

Recall that a \emph{(lower) order ideal} or \emph{down-set}
of $P$ is a subset $I$ of $P$ such that if 
$\vv_j\in I$ and $\vv_i\leq \vv_j$, then $\vv_i\in I$.
And a \emph{filter} (or \emph{dual order ideal} or \emph{up-set})
of $P$ is a subset $I$ of $P$ such that if 
$\vv_i\in I$ and $\vv_j\geq \vv_i$, then $\vv_j\in I$.
Let $\chi_I: P \to \R$ denote the characteristic function of $I$, i.e.
\begin{equation*}
	\chi_I(\vv_i) = \begin{cases}
		1, &\vv_i\in I\\
		0, &\vv_i\notin I
	\end{cases}
\end{equation*}

\begin{proposition}\cite{order}
The vertices of $\OO(P)$ are in bijection with the filters of $P$: 
namely, 
the vertices of $\OO(P)$ are the characteristic functions
$\chi_I$ of filters $I$ of $P$.

The facets of ${\mathcal{O}}(P)$ are in bijection with the cover relations
of $\hat{P}$: namely, if $\vv_i\lessdot \vv_j$ is a cover relation in $\hat{P}$,
then the corresponding facet consists of those $g\in \mathcal{O}(P)$
satisfying $g(\vv_i)=g(\vv_j)$.
\end{proposition}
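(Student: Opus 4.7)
The plan is to prove the vertex and facet descriptions independently, each by a standard extreme-point or dimension argument.

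For the vertex bijection, I first verify that $\chi_I \in \OO(P)$ for any filter $I$: the values lie in $\{0,1\} \subset [0,1]$, and if $\vv_i \leq \vv_j$, then $\vv_i \in I$ forces $\vv_j \in I$ by the filter property, so $\chi_I(\vv_i) \leq \chi_I(\vv_j)$ in either case. Conversely, for any $f \in \OO(P)$ let $0 \leq a_1 < a_2 < \cdots < a_k \leq 1$ be its distinct values, and set $S_\ell := \{v \in P : f(v) \geq a_\ell\}$. Each $S_\ell$ is a filter of $P$ since $f$ is order-preserving, and
\[
f \;=\; (1-a_k)\chi_\emptyset \;+\; a_1 \chi_{S_1} \;+\; \sum_{\ell=2}^{k} (a_\ell - a_{\ell-1})\chi_{S_\ell}
\]
is a convex combination of characteristic functions of filters. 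Unless $f$ is already equal to a single such $\chi_I$, this combination is nontrivial, so $f$ is not a vertex. Hence the vertices of $\OO(P)$ are exactly $\{\chi_I : I \text{ is a filter of } P\}$.

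For the facets, I work in $\hat{\OO}(P)$, using the linear bijection with $\OO(P)$ so that the conditions $g(\hat 0) = 0$ and $g(\hat 1) = 1$ are automatic. The defining inequalities take the uniform form $g(x) \leq g(y)$ for each $x \leq y$ in $\hat P$. If $x < y$ is not a cover, I pick any maximal chain $x = z_0 \lessdot z_1 \lessdot \cdots \lessdot z_m = y$ in $\hat P$; summing the cover inequalities along this chain yields $g(x) \leq g(y)$, so the non-cover inequalities are redundant. Thus every facet must correspond to a cover relation of $\hat P$, and distinct covers give distinct facet hyperplanes because the corresponding normal vectors $e_y - e_x$ are linearly independent.

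It remains to show that each cover $x \lessdot y$ in $\hat P$ actually defines a facet, i.e.\ that $F_{xy} := \{g \in \hat{\OO}(P) : g(x) = g(y)\}$ has dimension $|P| - 1$. The polytope $\hat{\OO}(P)$ itself has dimension $|P|$: its relative interior contains any strict order-preserving map $\hat P \to [0,1]$ with endpoints $0$ and $1$, for instance the rescaled rank function coming from a linear extension of $\hat P$. For the face $F_{xy}$, I identify $x$ and $y$ in $\hat P$ to obtain a quotient poset with $|P|+1$ elements still possessing a unique minimum and maximum; a rescaled linear extension of this quotient pulls back to a point in the relative interior of $F_{xy}$, giving a $(|P|-1)$-parameter family of such points. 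The main subtlety is precisely this dimension count; everything else is a routine unpacking of the definition of $\OO(P)$.
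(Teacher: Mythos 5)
The paper does not supply a proof of this proposition; it cites Stanley's original paper \cite{order}. Your argument is essentially the standard one found there, and it is correct in outline. The vertex half is fine: your telescoping decomposition exhibits any $f\in\OO(P)$ as a convex combination of characteristic functions of filters, with nonnegative coefficients summing to $1$, and a vertex must therefore equal one of the $\chi_{S_\ell}$.

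Two small points in the facet half deserve tightening. First, the assertion that "the corresponding normal vectors $e_y - e_x$ are linearly independent" is literally false when the number of cover relations exceeds $|P|$ (e.g.\ any poset with a diamond); what you need, and what is true, is that distinct covers give pairwise non-proportional functionals on the affine hyperplane $\{g(\hat 0)=0,\ g(\hat 1)=1\}$, hence distinct supporting hyperplanes. Second, the identification of $x$ with $y$ in $\hat P$ does not immediately yield a poset: the naive quotient relation can fail transitivity (take $a_1 \lessdot y$, $x \lessdot c_1$, with $a_1$ incomparable to $x$ and to $c_1$; then $[a_1]\leq'[x]\leq'[c_1]$ but $[a_1]\not\leq'[c_1]$). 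One must pass to the transitive closure, which one then checks is antisymmetric precisely because $x\lessdot y$ is a cover, or — more cleanly and avoiding the quotient — choose a linear extension of $\hat P$ in which $x$ and $y$ are consecutive and collapse that one step. Finally, the phrase "a $(|P|-1)$-parameter family" is asserted rather than shown: the cleanest route is to note that the map $\hat{\OO}(\overline{\hat P}) \hookrightarrow F_{xy}$ (duplicating the common value at $[x]=[y]$) is an affine injection from a full-dimensional polytope in a $(|P|-1)$-dimensional affine space, so $\dim F_{xy}\geq |P|-1$, hence $F_{xy}$ is a facet. None of these issues affect the overall strategy, which matches the cited source.
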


There is a natural generalization of order polytope which first appeared in 
\cite{marked}.

\begin{definition}\label{def:markedposet}
Let  $P = P_{\bullet} \sqcup P_{\star}$ 
	be a \emph{starred poset} as  in \cref{quiverfromposet}.
	We write $P_{\bullet} = \{\vv_1,\dots,\vv_n\}$
	and $P_{\star} =\{\star_1,\dots,\star_{\ell}\}$. 
A pair $(P,m)$ is called a \emph{marked poset} if
 $m:P_{\star} \to \R$ is an order-preserving map on $P_{\star}$, called a \emph{marking}.
\end{definition}

\begin{definition}\label{def:marked}\cite{marked}
	The \emph{marked order polytope} $\mathcal{O}_m(P)$ of the 
	marked poset 
$(P=P_{\bullet} \sqcup P_{\star}, m)$ 
	is the subset of $\R^{P_{\bullet}}$ defined by the conditions 
	\begin{align*}
	m(\star_i) \leq f(\vv_j) &\quad  \text{ if }\star_i\leq \vv_j \text{ where }\star_i\in P_{\star}, \vv_j\in P_{\bullet}\\
		f(\vv_i) \leq m(\star_j) &\quad \text{ if }\vv_i\leq \star_j \text{ where }
		\vv_i\in P_{\bullet}, 
		\star_j \in P_{\star}, 
		\\
		f(\vv_i) \leq f(\vv_j) &\quad \text{ if }\vv_i\leq \vv_j \text{ where }\vv_i, \vv_j \in P_{\bullet}.
	\end{align*}
\end{definition}

\subsection{The relation to marked order polytopes}

In what follows we will
be particularly interested in
\emph{ranked} and \emph{graded} posets.

\begin{definition}\label{def:rank}
Let $P$ be a finite poset with bounded extension $\hat{P} = P \cup \{\hat{0},\hat{1}\}$.  
We say that a poset $P$ is \emph{ranked} if for each $\vv\in P$,
each maximal chain in $P \cup \hat{0}$ from $\hat{0}$ to $\vv$ has the same length.
And we say that $P$ is \emph{graded} 
if all maximal chains in $\hat{P} = P \cup \hat{0} \cup \hat{1}$ 
from $\hat{0}$ to $\hat{1}$ have the same length.
Given a ranked poset $P$, for $\vv\in P$, we let 
$\rank(\vv)$ denote the length of any maximal chain from $\hat{0}$ to ${\vv}$.
For brevity, we will sometimes denote $\rank(\vv)$ by $R(\vv)$.
\end{definition}
\begin{remark}
Note that every graded poset $P$ is ranked 
but the notion of ranked poset is more general.
\end{remark}

\begin{definition}
Let $P$ be a starred poset as in \cref{quiverfromposet}, and
suppose that $P$ is ranked with rank function $R$.
The rank function  gives rise to a marking
$m:P_{\star} \to \Z$ by letting $m(\star_i)=R(\star_i)$ for all $\star_i
	\in P_{\star}$.
We let $\mathcal{O}_R(P)$ denote the marked order polytope associated to the
marking coming from $R$. 
\end{definition}

\cite{FFP} showed that certain marked order polytopes are {reflexive}.

\begin{proposition}\cite[Proposition 3.1 and Theorem 3.4]{FFP}\label{p:reflexive}
Consider
the marked order polytope $\mathcal{O}_R(P)$ associated to $(P, R)$,
where $P =
	P_{\bullet} \sqcup P_{\star}$ is  ranked  with rank function $R$.\footnote{We note
	that \cite{FFP} used a definition of ranked that is slightly more general than the one
	we use here.  \cite{FFP} defines a ranked poset to be a poset $P$
together with  a rank function $R:P\to \Z$
such that for each covering relation $a \lessdot b$ we have $R(b)=R(a)+1$.  So in particular,
	\cite{FFP} would allow two minimal elements to have different ranks.}
Then $\OO_R(P)$ contains a unique interior lattice point
$\mathbf{u} = (u_\vv)_{\vv\in P_{\bullet}}$, where $u_\vv=R(\vv)$.
Moreover, if we let
$\overline{\OO}_R(P)
:=\OO_R(P)-\mathbf{u}$, then
$\overline{\OO}_R(P)$ is a reflexive polytope.
\end{proposition}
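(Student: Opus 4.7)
The plan is to exhibit $\overline{\OO}_R(P)$ as the polar dual of the root polytope $\NP(Q)$, where $Q := Q_{(P_\bullet, P_\star)}$ is the starred quiver associated to the starred poset $P$ in \cref{quiverfromposet}, and then to invoke \cref{t:reflexive}.

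First I would verify that $\mathbf{u} = (R(\vv))_{\vv \in P_\bullet}$ is a lattice point of $\OO_R(P)$ lying in its interior. Integrality is immediate. Since $P$ is ranked, any strict comparison $\vv_i < \vv_j$ in $P$ (including those involving starred vertices, where the marking coincides with $R$) forces $R(\vv_i) < R(\vv_j)$, so all defining inequalities of $\OO_R(P)$ are satisfied strictly at $\mathbf{u}$. For uniqueness, given any interior integer point $\mathbf{w} = (w_\vv)$, walking along a saturated chain from a starred minimum up to $\vv$ produces a strictly increasing integer sequence whose rank is forced, giving $w_\vv \geq R(\vv)$; the symmetric argument from a starred maximum down to $\vv$ forces $w_\vv \leq R(\vv)$, so $\mathbf{w} = \mathbf{u}$.

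Next, translating by $-\mathbf{u}$ and writing $g = f - \mathbf{u}$ (with $g(\star_k) := 0$ by convention), each cover-relation inequality of $\OO_R(P)$ of the form $f(\vv_i) \leq f(\vv_j)$ with $\vv_i \lessdot \vv_j$ becomes $g(\vv_j) - g(\vv_i) \geq -\bigl(R(\vv_j) - R(\vv_i)\bigr) = -1$, i.e., $\langle u_a, g\rangle \geq -1$, where $a: \vv_i \to \vv_j$ is the arrow of $Q$ corresponding to the cover and $u_a$ is its associated vector from \cref{def:NP}. Non-cover comparisons decompose into chains of covers and impose no further constraints. By \cref{lem:vertexarrow}, $\{u_a : a \in \Arr(Q)\}$ is exactly the vertex set of $\NP(Q)$, so this description identifies $\overline{\OO}_R(P)$ with the intersection of the half-spaces $\langle u_a, g\rangle \geq -1$, that is, with $\NP(Q)^*$.

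To apply \cref{t:reflexive} we need $Q$ to be strongly-connected. Since $P_\star$ contains all minima and maxima of $P$, any $\vv \in P_\bullet$ lies on a saturated chain upward to some starred maximum and downward to some starred minimum; after identifying all starred vertices as in \cref{rem:one}, this gives oriented paths in both directions between $\star$ and $\vv$. Then \cref{t:reflexive} yields that $\NP(Q)$ is reflexive, so $\NP(Q)^* = \overline{\OO}_R(P)$ is a lattice polytope, hence reflexive by \cref{def:reflexive}. The main technical obstacle I anticipate lies in the bookkeeping of paragraph three: one must verify that the ranked hypothesis is exactly what makes each translated cover-inequality take the required form $\langle u_a, g \rangle \geq -1$ with constant term precisely $-1$, and one must carefully align the conventions for starred vertices in the marked order polytope with those in \cref{def:NP} for the root polytope; everything else then follows cleanly from the results of Section 2.
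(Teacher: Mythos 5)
Your proof is correct, but you should be aware that the paper does not actually prove this proposition: it is cited from \cite[Proposition~3.1 and Theorem~3.4]{FFP} with no internal argument, and then later used as an input to \cref{prop:NPreflexive}. Your argument therefore constitutes a genuinely different, self-contained derivation, and it reverses the paper's logical dependency. The paper's route is $\text{[FFP]} \Rightarrow \cref{p:reflexive} \Rightarrow \cref{prop:NPreflexive}$; your route is to first establish $\overline{\OO}_R(P) = \NP(Q)^*$ directly from the translated cover inequalities, and then invoke \cref{t:reflexive} (together with \cref{lem:0interior} and \cref{l:full-dimensional}, all proved in Section~2 with no reference to marked order polytopes or to FFP), so there is no circularity. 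The computation you perform in your third paragraph --- checking that each cover relation, whether $\vv_i\lessdot\vv_j$, $\star_i\lessdot\vv_j$, or $\vv_i\lessdot\star_j$, becomes $\langle u_a, g\rangle \geq -1$ after the shift by $\mathbf{u}$, with the ranked hypothesis $R(b)=R(a)+1$ supplying the constant $-1$, and that non-cover comparisons telescope away --- is exactly the computation the paper carries out inside the proof of \cref{prop:NPreflexive}; you have simply repackaged it as the substance of \cref{p:reflexive} rather than as a corollary of it. What this buys you is a version of the paper that proves the FFP result rather than importing it. Two small remarks: the uniqueness of the interior lattice point, which you argue by chain-walking from a starred minimum up and a starred maximum down, is also automatic once $\overline{\OO}_R(P)$ is identified as a reflexive polytope (a reflexive polytope has $\mathbf{0}$ as its only interior lattice point); and when reading off $\NP(Q)^*=\overline{\OO}_R(P)$ from the system $\{\langle u_a,g\rangle\ge-1\}$ you are implicitly using that $\NP(Q)$ is full-dimensional and contains $\mathbf{0}$ in its interior, which is where \cref{l:full-dimensional} and \cref{lem:0interior} enter, so these deserve explicit mention.
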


	\begin{figure}
\[ \adjustbox{scale=.75,center}{\begin{tikzcd}
	{\star_1} &&& { 5\, \star\ \ \,} \\
	{v_4\bullet\quad} & {\star_2} && { 4\, \bullet\ \ \,} & { \ \ \,\star\, 4} \\
	{v_3\bullet\quad} & {\ \ \ \bullet v_6} && { 3\, \bullet\ \ \,} & { \ \ \,\bullet\, 3} \\
	{v_2\bullet\quad} & {\ \ \ \bullet v_5} && { 2\, \bullet\ \ \,} & { \ \ \,\bullet\, 2} \\
	{v_1\bullet\quad} &&& { 1\, \bullet\ \ } \\
	{\hat 0\, \star_0\ } &&& 
	\arrow[from=2-1, to=1-1]
	\arrow[from=2-4, to=1-4]
	\arrow[from=3-1, to=2-1]
	\arrow[from=3-2, to=2-2]
	\arrow[from=3-4, to=2-4]
	\arrow[from=3-5, to=2-5]
	\arrow[from=4-1, to=3-1]
	\arrow[from=4-1, to=3-2]
	\arrow[from=4-2, to=3-2]
	\arrow[from=4-4, to=3-4]
	\arrow[from=4-4, to=3-5]
	\arrow[from=4-5, to=3-5]
	\arrow[from=5-1, to=4-1]
	\arrow[from=5-1, to=4-2]
	\arrow[from=5-4, to=4-4]
	\arrow[from=5-4, to=4-5]
	\arrow[from=6-1, to=5-1]
	\arrow[from=6-4, to=5-4]
\end{tikzcd}}\]
	\caption{At left: a marked poset, drawn as a starred quiver.
	At right: the marked poset
	with elements labeled by their ranks\label{fig:markedorder}}
\end{figure}
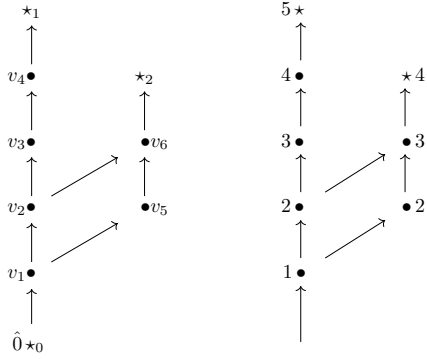

\begin{example}\label{ex:marked}
Let $P$ be the starred poset  
with Hasse diagram shown at the left of \cref{fig:markedorder}, identified
with its corresponding starred quiver $Q$ as in \cref{quiverfromposet}.
The ranks of the elements of $P$ are indicated at the right of 
\cref{fig:markedorder}.  By definition, $\OO_R(P)$
is defined by the inequalities
	$$0 \leq f(\vv_1) \leq f(\vv_2) \leq f(\vv_3) \leq f(\vv_4) \leq 5, \qquad
	f(\vv_2) \leq f(\vv_6), \qquad f(\vv_1) \leq f(\vv_5) \leq f(\vv_6) \leq 4.$$
	Clearly $\OO_R(P)$ has an interior lattice point $\mathbf{u}=(u_1,\dots,u_6)$
	with $u_i =\rank(\vv_i)$, i.e. $u_1=1, u_2=2, u_3=3, u_4=4, u_5=2, u_6=3.$
	Now if we let $F(\vv_i) = f(\vv_i) - \rank(\vv_i)$, or equivalently
	substitute $f(\vv_i)=F(\vv_i)+\rank(\vv_i)$ into the above inequalities,
	 we see that 
	$\overline{\OO}_R(P)$ is cut out by the inequalities 
\begin{align*}
	&  F(\vv_1) \geq -1, &\qquad
	&F(\vv_2)-F(\vv_1) \geq -1, &\qquad
	&F(\vv_3)-F(\vv_2) \geq -1, &\qquad \\
	&F(\vv_4)-F(\vv_3) \geq -1,& \qquad
	&-F(\vv_4) \geq -1, &\qquad 
	&F(\vv_6)-F(\vv_2) \geq -1, & \qquad\\
	& F(\vv_5)-F(\vv_1) \geq -1, &\qquad
	& F(\vv_6)-F(\vv_5) \geq -1, &\qquad
	&-F(\vv_6) \geq -1. &\qquad
\end{align*}
	From these inequalities we see that the polar dual of $\overline{\OO}_R(P)$
	is the polytope with vertices
	$$\{e_1, e_2-e_1, e_3-e_2, e_4-e_3, -e_4, e_6-e_2, e_5-e_1, e_6-e_5, -e_6 \}.$$  This is exactly the root polytope $\NP(Q)$ of our starred quiver $Q$!
\end{example}

More generally, 
the polar dual to $\overline{\OO}_R(P)$ is the 
{root polytope} of the associated starred quiver.

\begin{theorem}\label{prop:NPreflexive}
Let $P=P_{\bullet} \sqcup P_{\star}$ be a starred poset 
	and let $Q:=Q_{(P_{\bullet},P_{\star})}$ be the starred quiver defined in 
\cref{quiverfromposet}.
Suppose further that $P$ is  ranked  with rank function $R$.
Then the polytope
$\overline{\OO}_R(P)$
from \Cref{p:reflexive} is polar dual to the root polytope $\NP(Q)$.
\end{theorem}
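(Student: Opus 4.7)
The plan is to match the facet inequalities of $\overline{\OO}_R(P)$ directly with the vertices $u_a$ of $\NP(Q)$, arrow by arrow, so that $\overline{\OO}_R(P)$ visibly equals $\{x\in\R^{P_{\bullet}} : \langle x, u_a\rangle \geq -1 \text{ for all } a\in \Arr(Q)\} = \NP(Q)^*$.

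First I would reduce the defining inequalities of the marked order polytope to those coming from cover relations of $P$. Any inequality of $\OO_R(P)$ associated to a non-cover relation $x < y$ is implied by transitivity from the inequalities along a saturated chain connecting $x$ and $y$, using that $m = R$ is order-preserving on $P_\star$. A cover relation $\star_i \lessdot \star_j$ between two starred elements contributes no constraint (since both sides are constants and $R(\star_i) \leq R(\star_j)$ automatically), so the inequalities of $\OO_R(P)$ are exactly indexed by cover relations with at least one normal endpoint; by construction of $Q_{(P_\bullet,P_\star)}$, these correspond bijectively to the arrows of $Q$.

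Next I would perform the translation $F(\vv_i) := f(\vv_i) - R(\vv_i) = f(\vv_i) - u_{\vv_i}$ so that $\overline{\OO}_R(P) = \OO_R(P) - \mathbf{u}$, and compute what each cover-relation inequality becomes. There are three cases, and because $R$ increases by $1$ along a cover, each case produces a clean inequality:
\begin{itemize}
\item a cover $\vv_i \lessdot \vv_j$ in $P_\bullet$ gives $F(\vv_j) - F(\vv_i) \geq -1$, i.e.\ $\langle x, e_j - e_i\rangle \geq -1$, which is $\langle x, u_a\rangle \geq -1$ for the corresponding arrow $a: \vv_i\to \vv_j$;
\item a cover $\star_i \lessdot \vv_j$ gives $F(\vv_j) \geq -1$, i.e.\ $\langle x, e_j\rangle \geq -1$, matching $u_a = e_j$ for $a:\star_i\to \vv_j$;
\item a cover $\vv_i \lessdot \star_j$ gives $-F(\vv_i) \geq -1$, i.e.\ $\langle x, -e_i\rangle \geq -1$, matching $u_a = -e_i$ for $a:\vv_i\to \star_j$.
\end{itemize}
This precisely realizes $\overline{\OO}_R(P)$ as the intersection of the half-spaces $\{\langle x, u_a\rangle \geq -1\}$ over all $a\in \Arr(Q)$.

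Finally, to conclude that this intersection is literally the polar dual of $\NP(Q)$, I need the origin to lie in the interior of $\NP(Q)$ so that polar duality is an involution. Since $P_\star$ contains the minimal and maximal elements of $P$, after identifying all starred vertices the quiver $\overline{Q}$ is strongly connected (from any normal $\vv$ one reaches $\star$ by a directed path up to a maximal element, and returns from $\star$ by a directed path up from a minimal element), so \cref{lem:0interior} applies and $\mathbf{0}$ lies in the interior of $\NP(Q)$. Thus $\overline{\OO}_R(P) = \NP(Q)^*$, as desired. The only real step requiring care is the bookkeeping in the second paragraph — specifically, confirming that the translation really sends each marked-order inequality to exactly $\langle x, u_a\rangle \geq -1$ with the correct arrow $a$ — but this reduces to the observation that each cover adjusts the rank by exactly $1$, which is where the hypothesis that $P$ is \emph{ranked} is used.
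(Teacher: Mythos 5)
Your proof is correct and follows essentially the same route as the paper's: translate $\OO_R(P)$ by the rank vector $\mathbf u$, observe that each cover-relation inequality becomes $\langle x, u_a\rangle \geq -1$ because ranks jump by exactly one along a cover, and conclude that $\overline{\OO}_R(P)$ coincides with the polytope cut out by the vertices of $\NP(Q)$. You are somewhat more explicit in two spots — reducing from arbitrary order relations to covers and discarding covers between two starred elements, and verifying that $\mathbf 0$ lies in the interior of $\NP(Q)$ via strong connectivity and \cref{lem:0interior} — whereas the paper delegates the reflexivity/boundedness to \cref{p:reflexive}; this is extra care rather than a genuine departure.
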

One consequence of \cref{prop:NPreflexive} is a new proof that 
$\NP(Q)$ is reflexive in this setting.
\begin{proof}
We know from \cref{p:reflexive} that $\overline{\OO}_R(P) = \OO_R(P)-\mathbf{u}$  is reflexive.  By definition, it
	 has one
	inequality for each cover relation $a\lessdot b$ in the poset
	(where at least one of $a,b$ lies in $P_{\bullet}$):
$$\begin{cases}
	F(a)+R(a) \leq F(b)+R(b) 
	& \text{ if }a,b\in P_{\bullet}\\
	R(a) \leq F(b)+R(b)  & \text{ if }a\in P_{\star} \text{ and }b\in P_{\bullet}\\
	F(a)+R(a) \leq R(b) & \text{ if }a\in P_{\bullet} \text{ and }b\in P_{\star}.
\end{cases}$$
Since our poset is ranked (and hence $R(b)=R(a)+1$ when $a\lessdot b$), these inequalities
become:
$$\begin{cases}
	F(b)-F(a) \geq -1   & \text{ if }a,b\in P_{\bullet}\\
	 F(b) \geq -1  & \text{ if }a\in P_{\star} \text{ and }b\in P_{\bullet}\\
	F(a) \geq -1 & \text{ if }a\in P_{\bullet} \text{ and }b\in P_{\star}.
\end{cases}$$

But then the polar dual has vertices given by those  of the root polytope
associated to the quiver from
\cref{quiverfromposet}, so we are done.
\end{proof}

\cref{prop:NPreflexive} 
allows us to relate the face fan of the root polytope
to the normal fan of the marked order polytope.  
\begin{definition}
Suppose that a convex polytope $\PP$ contains the origin in its
interior.  Then the \emph{face fan}
$\F(\PP)$ is the fan whose 
cones are the cones over the faces of 
$\PP$.
\end{definition}

\begin{definition}
Given a convex polytope $\PP$ in $\R^n$, 
the \emph{inner normal fan} $\NF(\PP)$ of $\PP$
is a polyhedral fan in the dual space
$(\R^n)^*$ whose cones consist of the \emph{normal cone}
$c_F$ to each face $F$ of $\PP$.   That is,
$$\NF(\PP) = \{c_F\}_{F\in \face(\PP)},$$ where 
each normal cone  $c_F$ is defined as the set of 
linear functionals $w$ such that the set of points $x$ in 
$\PP$ that \emph{minimize} $w(x)$ contains $F$,
$$c_F = \{w\in (\R^n)^* \ \vert \ F \subset \argmin_{x\in \PP} w(x)\}.$$
\end{definition}

\begin{corollary}\label{cor:fans}
Let $P$ and $Q$ be the ranked starred poset and its  associated starred quiver from \cref{prop:NPreflexive}.
	Then the face fan $\F_Q:=\F(\NP(Q))$ of 
the root polytope $\NP(Q)$ coincides with the normal fan
of the  marked order
polytope
${\OO}_R(P)$. 
\end{corollary}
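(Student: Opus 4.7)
The plan is to deduce the corollary directly from \cref{prop:NPreflexive} using two standard facts: that the face fan of a polytope containing the origin in its interior coincides with the inner normal fan of its polar dual, and that the inner normal fan is invariant under translation of the polytope.

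First I would invoke \cref{prop:NPreflexive} to identify $\NP(Q)$ as the polar dual of $\overline{\OO}_R(P)$; by that proposition both polytopes are full-dimensional lattice polytopes with the origin in their interior. With the convention \eqref{eq:polar}, the standard inclusion-reversing bijection sends each face $F$ of $\NP(Q)$ to the face $F^\diamond = \{y \in \overline{\OO}_R(P) : x \cdot y = -1 \text{ for all } x \in F\}$ of $\overline{\OO}_R(P)$. A short direct verification shows that the cone $\cone(F)$ in the face fan $\F_Q$ agrees with the inner normal cone $c_{F^\diamond}$ in $\NF(\overline{\OO}_R(P))$: for any $x \in F$ the defining inequality $x \cdot y \geq -1$ of $\overline{\OO}_R(P)$ becomes an equality exactly on $F^\diamond$, so $x$ (viewed as a linear functional) attains its minimum on $\overline{\OO}_R(P)$ along $F^\diamond$; taking nonnegative linear combinations promotes this pointwise statement to the cone-wise equality $\F_Q = \NF(\overline{\OO}_R(P))$.

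Next I would observe that $\overline{\OO}_R(P) = \OO_R(P) - \mathbf{u}$ is a translate of $\OO_R(P)$, and that the inner normal fan is translation-invariant: a linear functional $w$ attains its minimum on $\OO_R(P)$ along a face $G$ if and only if it attains its minimum on $\OO_R(P) - \mathbf{u}$ along $G - \mathbf{u}$ (the minimum value merely shifts by $-w(\mathbf{u})$). Hence $\NF(\overline{\OO}_R(P)) = \NF(\OO_R(P))$, and combining this with the previous step yields the desired identification $\F_Q = \NF(\OO_R(P))$.

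I expect no serious obstacle in this argument. The only point that deserves care is the matching of sign conventions: the polar dual in \eqref{eq:polar} uses $x \cdot y \geq -1$, which pairs naturally with the \emph{inner} (minimizing) normal fan as defined in \cref{cor:fans}; had we instead adopted the opposite convention \eqref{eq:polar2}, the face fan would match the outer normal fan. Because the paper consistently uses the minimizing convention for both the polar dual and the normal fan, the identifications line up without further sign adjustments.
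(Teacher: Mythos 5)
Your argument is correct and is essentially the paper's own proof, which simply cites \cref{prop:NPreflexive} together with the translation-invariance of normal fans (the paper leaves the standard polar-duality correspondence between the face fan of a polytope and the inner normal fan of its dual implicit). You have just spelled out in detail what the paper dispatches in one sentence, including a careful check of the sign conventions.
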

\begin{proof} 
This follows immediately from \cref{prop:NPreflexive}, together with the fact that
shifting a polytope does not change its normal fan.
\end{proof}

In the next section we will give an analogue of \cref{cor:fans} but 
we will use the order polytope $\mathcal{O}(P)$
instead of the marked order polytope
$\overline{\OO}_R(P)$.

\subsection{Relation to order polytopes}
In this section we will use the starred quiver 
$Q_{\hat{P}}$ associated to the bounded extension $\hat{P}$ of a finite
poset $P$, as in \cref{rem:quiverfromposet}.  We
will show that the rays of the 
face fan $\F_{Q_{\hat{P}}}$ of the root polytope $\NP(Q_{\hat{P}})$
coincide with the rays of the (inner) normal fan $\NF(\OO(P))$ of 
the order polytope $\OO(P)$, for any finite poset $P$.  
Additionally, if $P$ is a ranked poset,
then $\F_{Q_{\hat{P}}}$ refines $\NF(\OO(P))$: that is,
each maximal cone of $\F_{Q_{\hat{P}}}$ is contained in a maximal cone of 
$\NF(\OO(P))$. 
Finally, if $P$ is a graded poset,
then the two fans agree, namely $\F_{Q_{\hat{P}}}$ equals $\NF(\OO(P))$. The latter result also follows from the observation, found in \cite[Remark 1.6]{HH}, that for a graded poset $P$ the polar dual of $\NP(Q_{\hat P})$ is a dilation of the order polytope $\OO(P)$ up to shift.

\begin{lemma}\label{lem:rays}
Let $P$ be an arbitrary finite poset.
Then the rays of the 
	face fan $\F_{Q_{\hat{P}}}$ of the root polytope $\NP(Q_{\hat{P}})$
of the starred quiver $Q_{\hat{P}}$
coincide with the rays of the (inner) normal fan $\NF(\OO(P))$ of 
the order polytope $\OO(P)$;  these rays are in bijection 
with arrows of $Q_{\hat{P}}$, or equivalently, with cover relations in 
the Hasse diagram of $\hat{P}$.
\end{lemma}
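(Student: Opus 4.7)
The plan is to establish the three claims in the lemma via the natural chain of bijections
\[
\{\text{arrows of } Q_{\hat P}\} \;\longleftrightarrow\; \{\text{cover relations of } \hat P\} \;\longleftrightarrow\; \{\text{facets of } \OO(P)\},
\]
and then verify that these bijections are realized \emph{geometrically} by the same primitive vectors in $\R^P$ (resp.\ its dual).

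First I would unpack the rays of the face fan. Since $\F_{Q_{\hat P}}$ is by definition the fan of cones over faces of $\NP(Q_{\hat P})$, its rays are exactly the rays through the vertices of $\NP(Q_{\hat P})$. By \cref{lem:vertexarrow}, these vertices are the points $u_a$ for arrows $a \in \Arr(Q_{\hat P})$, and since $Q_{\hat P}$ has no multiple arrows (its underlying graph is the Hasse diagram of $\hat P$), the map $a \mapsto u_a$ is a bijection. By the construction in \cref{quiverfromposet,rem:quiverfromposet}, the arrows of $Q_{\hat P}$ are in bijection with the cover relations of $\hat P$, and under $a \mapsto u_a$ the three types of arrows give
\[
\vv_i \to \vv_j \ \leadsto\ e_j - e_i, \qquad \star \to \vv_j\ \leadsto\ e_j, \qquad \vv_i \to \star\ \leadsto\ -e_i.
\]

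Next I would unpack the rays of $\NF(\OO(P))$. Since $\OO(P)$ is full-dimensional in $\R^P$, its inner normal fan has one ray for each facet, generated by the primitive inner normal. By Stanley's theorem recalled above, the facets of $\OO(P)$ are in bijection with cover relations of $\hat P$: to the cover relation $\vv_i \lessdot \vv_j$ in $\hat P$ corresponds the facet defined by $g(\vv_i) = g(\vv_j)$ (with the convention $g(\hat 0) = 0$, $g(\hat 1) = 1$). A short case-by-case inspection of the three possible types of cover relation shows that the corresponding primitive inner normals are exactly $e_j - e_i$, $e_j$, $-e_i$ respectively — the same vectors as above.

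Matching these two lists yields the desired coincidence of rays and the bijections with arrows of $Q_{\hat P}$ and cover relations of $\hat P$. I do not expect a genuine obstacle here: the only point requiring a little care is confirming that the vectors listed are already \emph{primitive} lattice points (so that they are the actual ray generators and not positive multiples thereof), but this is immediate since $e_j$, $-e_i$, and $e_j - e_i$ are all primitive in $\Z^P$.
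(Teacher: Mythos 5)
Your argument is correct and follows the paper's own proof in all essentials: both match the vertices $u_a$ of $\NP(Q_{\hat P})$ (via \cref{lem:vertexarrow}) and the facet inner normals of $\OO(P)$ to cover relations of $\hat P$, and observe that the resulting ray generators $e_j-e_i$, $e_j$, $-e_i$ agree case by case. Your closing remark about primitivity of the ray generators is a small, harmless addition not spelled out in the paper.
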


\begin{proof} 
Let $P=\{\vv_1,\dots,\vv_n\}$.
It follows from 
\Cref{def:posetquiver},
\Cref{def:NP}, and \Cref{lem:vertexarrow}
that the vertices of $\NP(Q_{\hat{P}})$, and hence
	the rays of $\F_{Q_{\hat{P}}}$,
are in bijection with the cover relations in 
$\hat{P}$.  Specifically,
\begin{itemize}
\item For each cover relation $\vv_i \lessdot \vv_j$,
we have the vertex $e_j-e_i$ of $\NP(Q_{\hat{P}})$.
\item For each cover relation $\hat{0} \lessdot \vv_j$,
we have the vertex $e_j$.
\item For each cover relation $\vv_i \lessdot \hat{1}$,
we have the vertex  $-e_i$.
\end{itemize}
Each vertex $u$ of $\NP(Q_{\hat{P}})$ listed above 
 gives rise to the 
ray $\overrightarrow{0u}$
	from $0$ to $e_j-e_i$ (or $e_j$ or $-e_i$) in the face fan $\F_{Q_{\hat{P}}}$.

Meanwhile, the facet inequalities of $\OO(P)$
are also in bijection with the cover relations in 
$\hat{P}$.  If we identify $\R^{P}$ with 
$\R^n$ in the natural way, we get the following facet inequalities.
\begin{itemize}
\item For each cover relation $\vv_i \lessdot \vv_j$,
we have the facet inequality $x_j-x_i \geq 0$.
The linear functional $w$ which takes the dot product with 
$(e_j-e_i)$ is minimized  precisely at this facet.
\item For each cover relation $\hat{0} \lessdot \vv_j$,
we have the facet inequality $x_j \geq 0$.
The linear functional $w$ which takes the dot product with 
$e_j$ is minimized precisely at this facet.
\item For each cover relation $\vv_i \lessdot \hat{1}$,
we have the facet inequality $1-x_i \geq 0$.
The linear functional $w$ which takes the dot product with 
$-e_i$ is minimized precisely at this facet.
\end{itemize}

Therefore we see that we have an identification of rays
	of the face fan $\F_{Q_{\hat{P}}}$ with rays of the 
 (inner) normal fan $\NF(\OO(P))$.
\end{proof}

\begin{theorem}\label{thm:refine}
Let $P=\{\vv_1,\dots,\vv_n\}$ be a finite ranked poset.
	Then the face fan $\F_{Q_{\hat{P}}}$ of the root polytope $\NP(Q_{\hat{P}})$
 of the starred quiver $Q_{\hat{P}}$
refines the (inner) normal fan $\NF(\OO(P))$ of 
the \emph{order polytope} $\mathcal{O}(P)$ of $P$:
the two fans have the same set of rays, and each 
maximal cone of $\NF(\OO(P))$ is a union of maximal cones of 
	$\F_{Q_{\hat{P}}}$.  Moreover, if $P$ is a graded poset, then 
	$\F_{Q_{\hat{P}}}$ coincides with 
 $\NF(\OO(P))$.
\end{theorem}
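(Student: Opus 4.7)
The rays of $\F_{Q_{\hat P}}$ and $\NF(\OO(P))$ coincide by \cref{lem:rays}, so my plan focuses on the refinement assertion. I will assign to each facet $F$ of $\NP(Q_{\hat P})$, determined by a facet arrow-labeling $M_L$ as in \cref{t:reflexive}, a filter $I_L$ of $P$ such that $\cone(F)\subseteq c_{\chi_{I_L}}$, where $c_{\chi_{I_L}}$ denotes the inner normal cone of $\OO(P)$ at $\chi_{I_L}$. Since both fans share the same rays, producing such an $I_L$ for every facet $F$ is equivalent to the refinement property.

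The construction of $I_L$ will use the facet components of \cref{def:component}. My first step is to establish that for any facet arrow-labeling $M_L$ the facet components of $Q_{\hat P}$ partition $\hat P$ into exactly two pieces $C_0\ni\hat 0$ and $C_1\ni\hat 1$, after which I set $I_L:=C_1\cap P$. The key observation is that along any $-1$-arrow within a facet component the quantity $L(v)+R(v)$ is invariant, where $R$ denotes the rank function on $P$ extended by $R(\hat 0)=0$ and where the starred vertices carry $L=0$. Combined with \cref{lem:containstar}, which forces every facet component to meet $\{\hat 0,\hat 1\}$, this yields the partition: a component containing $\hat 0$ satisfies $L+R\equiv 0$ on its normal vertices (evaluated at a minimal element of $P$ adjacent to $\hat 0$, where $L=-1$ and $R=1$), whereas a component containing $\hat 1$ together with any normal vertex forces $L+R\geq 2$ there (evaluated at a maximal element of $P$ adjacent to $\hat 1$, where $L=1$ and $R\geq 1$), so $\hat 0$ and $\hat 1$ cannot lie in the same component.

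The main obstacle will be to prove that $I_L$ is a filter of $P$: for a cover $v\lessdot w$ in $P$ with $v\in I_L$ I must show $w\in C_1$. I will argue by contradiction. If instead $w\in C_0$, then combining $L(v)=c_1-R(v)$ (with $c_1\geq 2$ the constant value of $L+R$ on $C_1$), $L(w)=-R(w)$ on $C_0$, and the ranked-poset identity $R(w)=R(v)+1$ gives $M_L(v\to w)=L(w)-L(v)=-1-c_1\leq -3$, contradicting $M_L\geq -1$. Once $I_L$ is established as a filter, the inclusion $\cone(F)\subseteq c_{\chi_{I_L}}$ is immediate: any arrow $a$ with $M_L(a)=-1$ has both endpoints in the same facet component and hence in the same $\chi_{I_L}$-value class (extending $\chi_{I_L}$ by $0$ at $\hat 0$ and $1$ at $\hat 1$), so the cover underlying $a$ is uncrossed by $I_L$ and the ray $u_a$ is a ray of $c_{\chi_{I_L}}$.

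For the final assertion, when $P$ is graded the bounded extension $\hat P$ is itself ranked, with $R(\hat 1)=n+1$ equal to the common length of a maximal chain in $\hat P$. I can therefore apply \cref{prop:NPreflexive} directly to the starred poset $\hat P$ (with $P_\star=\{\hat 0,\hat 1\}$) to conclude that $\NP(Q_{\hat P})$ is polar dual to $\overline{\OO}_R(\hat P)=\OO_R(\hat P)-\mathbf{u}$. A short check identifies $\OO_R(\hat P)$ with the dilate $(n+1)\OO(P)$, and since positive scaling and translation leave the normal fan unchanged, I obtain $\F_{Q_{\hat P}}=\NF(\NP(Q_{\hat P})^*)=\NF(\OO(P))$, finishing the graded case.
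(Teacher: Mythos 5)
Your proof is essentially correct, and for the refinement statement it follows the same broad strategy as the paper (facet components of the facet arrow-labelings, combined with the rank function), though your bookkeeping differs: the paper proves directly that $C_0\cap P$ is a lower order ideal (by lifting a maximal chain from $\hat 0$ through $\vv_\ell$ to $\vv_j$ and forcing all its arrow labels to be $-1$), whereas you establish directly that $C_1\cap P$ is a filter via the invariant $L+R$ and the contradiction $M_L(v\to w)\le -3$. Your graded-case argument is genuinely different: you invoke the polar duality $\NP(Q_{\hat P})^*\cong\overline{\OO}_R(\hat P)$ (applying \cref{prop:NPreflexive} to $\hat P$) and identify $\OO_R(\hat P)$ with a dilate of $\OO(P)$, whereas the paper sharpens the refinement argument to equality using \cref{rem:graded}, i.e.\ that in the graded case $C_1$ is a \emph{full} subquiver. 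Your route is arguably cleaner when $P$ is graded, since it bypasses the detailed facet-component analysis; the paper's route has the advantage of tracking precisely which extra cones appear in the non-graded case.

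Two points need correction or clarification. First, your quantity $c_1$ is described as ``the constant value of $L+R$ on $C_1$,'' but $L+R$ need not be constant on $C_1\cap P$: the vertex $\hat 1$ has no rank, so two branches $v_1\to\hat 1\leftarrow v_2$ of $C_1$ can meet only at $\hat 1$ and have $R(v_1)\ne R(v_2)$ while $L(v_1)=L(v_2)=1$. What is actually needed (and true) is the pointwise bound $L(v)+R(v)\ge 2$ for every normal $v\in C_1$: take a simple path in $C_1$ from $v$ to $\hat 1$; it stays in $P$ until its last arrow $u\to\hat 1$, so $L(v)+R(v)=L(u)+R(u)=1+R(u)\ge 2$. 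This pointwise bound is all your contradiction uses, so the argument survives, but the phrasing should be fixed. The same care (simple paths avoiding $\hat 1$) is what makes the constancy $L+R\equiv 0$ on $C_0\cap(P\cup\{\hat 0\})$ rigorous, since a priori a path in $C_0$ between normal vertices has no rank-based obstruction unless you know it avoids $\hat 1$; you get that because $\hat 1\notin C_0$, but that is part of what you are proving, so you should take the simple path from $\hat 0$ to $\hat 1$ first to rule out $\hat 1\in C_0$. Second, $R(\hat 1)$ is the common length of maximal chains in $\hat P$ (equivalently, $1+\max_{v\in P}\rank(v)$), not $n+1$; the identification is with the dilate $R(\hat 1)\cdot\OO(P)$, and the rest of that paragraph goes through unchanged.
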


We note that if $P$ is not ranked, then the refinement statement of \Cref{thm:refine} may fail. Namely, the right-hand side image in  
\Cref{fig:nonranked} shows an example where it fails.

\begin{figure}
\[\begin{tikzcd}
	&&&&&& {\star_1} \\
	&& {\star_1} && {\overset{v'_7}\bullet} && {\overset{v'_8}\bullet} \\
	{\overset{v_6}\bullet} && {\overset{v_7}\bullet} &&& {\overset{v'_6}\bullet} \\
	& {\overset{v_5}\bullet} &&&&& {\overset{v'_5}\bullet} \\
	&& {\overset{v_4}\bullet} &&&&& {\overset{v'_4}\bullet} \\
	&&& {\overset{v_3}\bullet} &&&&& {\overset{v'_3}\bullet} \\
	&& {\overset{v_1}\bullet} && {\overset{v_2}\bullet} && {\overset{v'_1}\bullet} &&& {\overset{v'_2}\bullet} \\
	&& {\star_0} &&&& {\star_0}
	\arrow["2", curve={height=-6pt}, from=2-5, to=1-7]
	\arrow["{-1}", Rightarrow, thick, from=2-7, to=1-7]
	\arrow["2", curve={height=-6pt}, from=3-1, to=2-3]
	\arrow["{-1}", Rightarrow, thick, from=3-3, to=2-3]
	\arrow["{-1}"', Rightarrow, thick, from=3-6, to=2-5]
	\arrow["{-1}"', Rightarrow, thick, from=4-2, to=3-1]
	\arrow["{-1}"', Rightarrow, thick, from=4-7, to=3-6]
	\arrow["{-1}"', Rightarrow, thick, from=5-3, to=4-2]
	\arrow["0"', from=5-8, to=4-7]
	\arrow["{-1}", Rightarrow, thick, from=6-4, to=5-3]
	\arrow["{-1}", Rightarrow, thick, from=6-9, to=5-8]
	\arrow["{-1}", curve={height=-18pt}, Rightarrow, thick, from=7-3, to=3-1]
	\arrow["{-1}", curve={height=18pt}, Rightarrow, thick, from=7-5, to=3-3]
	\arrow["{-1}", Rightarrow, thick, from=7-5, to=6-4]
	\arrow["{-1}"', curve={height=-18pt}, Rightarrow, thick, from=7-7, to=2-5]
	\arrow["{-1}", curve={height=18pt}, Rightarrow, thick, from=7-10, to=2-7]
	\arrow["{-1}", Rightarrow, thick, from=7-10, to=6-9]
	\arrow["{-1}", Rightarrow, thick, from=8-3, to=7-3]
	\arrow["2"', curve={height=12pt}, from=8-3, to=7-5]
	\arrow["{-1}", Rightarrow, thick, from=8-7, to=7-7]
	\arrow["2"', curve={height=12pt}, from=8-7, to=7-10]
\end{tikzcd}\]
\caption{The above quivers $Q_{\hat {P}}$ and $Q_{\hat P'}$ are constructed out of two posets, $P$ and $P'$, neither of which is ranked. At left: a facet arrow-labeling of $Q_{\hat {P}}$ for which the facet components $C_0$ of $\star_0$ and $C_1$ of $\star_1$ are not distinct. At right: a facet arrow-labeling of $Q_{\hat {P}'}$ whose facet component 
	$C_0$ is not a filter; hence the corresponding maximal cone of $\F_{Q_{\hat{P}'}}$ does not lie in a maximal
	cone of $\NF(\OO(P'))$. In both examples the double arrows show the arrows labeled~$-1$}
	\label{fig:nonranked}
\end{figure}
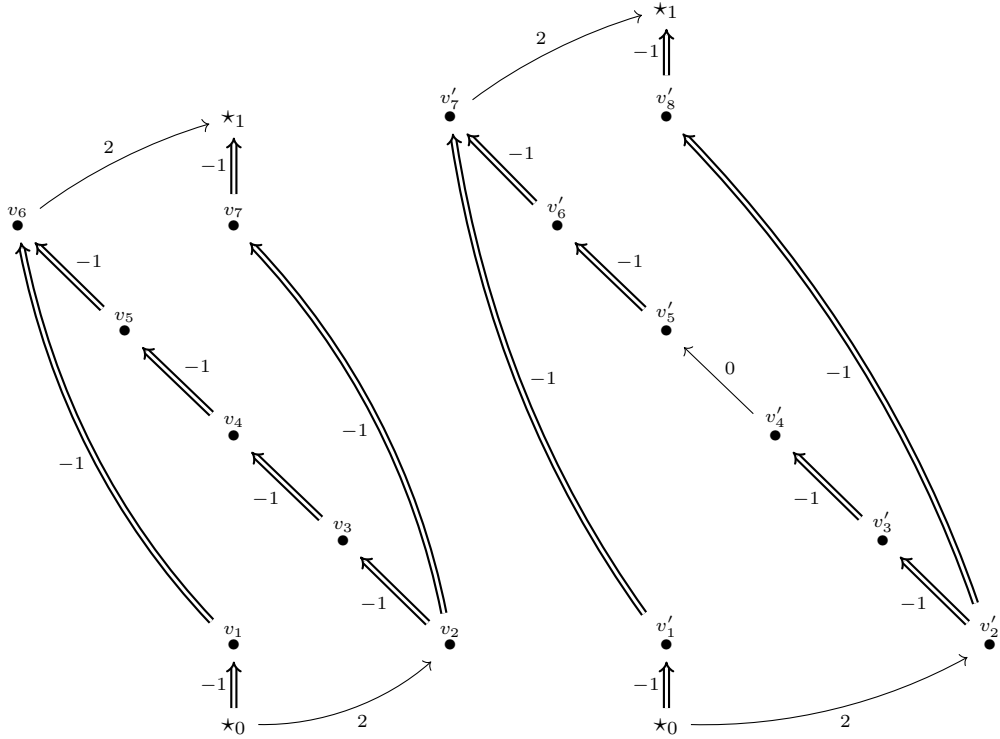

Before proving the theorem, we first 
need the following lemma. Recall the  facet arrow-labelings and corresponding $\bullet$-labelings from Section~\ref{sec:root} that are used for describing the facets of root polytopes. Recall also the facet components associated to facet arrow-labelings in \cref{def:component}

\begin{lemma} \label{lem:orderideal}
Suppose that $P=\{\vv_1,\dots,\vv_n\}$ is a ranked poset. 
Let $M$ be a facet arrow-labeling of the starred quiver $Q_{\hat{P}}$, and $L$ the corresponding $\bullet$-labeling. Consider the associated facet components  $C_0$ and $C_1$ of $Q_{\hat{P}}$ containing $\star_0$ and $\star_1$, respectively.  

The facet components $C_0$ and $C_1$ are disjoint. The facet component $C_0$ consists of a set of vertices $S\sqcup\{\star_0\}$ along with all arrows between them, where the set $S$ forms a lower order ideal in $P$. For the facet component $C_1$, the set of normal vertices in $C_1$ 
	forms a filter in $P$,
	namely $\{\vv_1,\dots,\vv_n\} \setminus S$. 

\end{lemma}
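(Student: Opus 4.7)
The plan is to convert the facet arrow-labeling $M$ into its corresponding $\bullet$-labeling $L\colon \hat P\to\R$ (with $L(\star_0)=L(\star_1)=0$) and then systematically exploit the rank function $R$ of $P$ together with the inequality $M(a)\ge -1$. The linchpin is a simple \emph{rank-tracking} computation: along any path in the undirected Hasse diagram of $\hat P$ that avoids $\hat 1$, every vertex has a well-defined rank and each step changes $R$ by $\pm 1$. If in addition every arrow on the path is labeled $-1$, then each step changes $L$ by the opposite sign, so $L(v)-L(u)=-\bigl(R(v)-R(u)\bigr)$ summed along the path. Applied to any maximal chain of covers from $\hat 0$ to $v$ (which has length $R(v)$ by rankedness), the inequality $M\ge -1$ immediately yields the universal bound $L(v)\ge -R(v)$ for every $v\in P$.

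I would then use this rank-tracking to prove the disjointness $C_0\cap C_1=\emptyset$. Supposing the contrary, take a simple $-1$-arrow path from $\hat 0$ to $\hat 1$ in the undirected Hasse diagram of $\hat P$ and let $w$ be its penultimate vertex (so $w\in P$ and $w\lessdot\hat 1$). The last arrow $w\to\hat 1$ forces $L(w)=1$; the initial segment, which avoids $\hat 1$, lies in the setting of the rank-tracking formula and gives $L(w)=-R(w)$. Combining yields $R(w)=-1$, a contradiction. This is the key step I expect to be the main obstacle, and it is precisely where rankedness is essential, as illustrated by the left-hand image of \cref{fig:nonranked}.

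Given the disjointness, \cref{lem:containstar} applied to the strongly-connected starred quiver $Q_{\hat P}$ (whose only starred vertices are $\hat 0$ and $\hat 1$) forces every facet component to be either $C_0$ or $C_1$, partitioning the normal vertices as $P=S\sqcup T$ with $S=C_0\cap P$ and $T=C_1\cap P$. The same rank-tracking, now applied to a $C_0$-path from $\hat 0$ to $v\in S$ (which necessarily avoids $\hat 1$), upgrades the universal bound to the equality $L(v)=-R(v)$. To prove that $S$ is a lower order ideal I would consider a cover $u\lessdot v$ with $v\in S$ and $u\in P$: then $L(v)=-(R(u)+1)$, the arrow inequality $M(u\to v)\ge -1$ gives $L(u)\le -R(u)$, and the universal bound gives $L(u)\ge -R(u)$, so $L(u)=-R(u)$ and $M(u\to v)=-1$; hence $u\in C_0$. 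The same $L$-computation shows that every cover arrow of $\hat P$ between vertices of $S\cup\{\hat 0\}$ carries label $-1$, so by maximality all of them lie in $C_0$, yielding the claimed description of $C_0$. Finally, $T=P\setminus S$ is automatically a filter.
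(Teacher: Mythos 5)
Your proof is correct, and while it uses the same underlying mechanism as the paper — rank-tracking along paths, together with the inequality $M(a)\ge -1$ — it is organized differently, in a way that actually tidies up a subtle point. The paper proves $L(\vv)=-\rank(\vv)$ for $\vv\in C_0$ by choosing a path in $C_0$ from $\star_0$ to $\vv$ and counting forward and backward edges against the rank; this edge-counting silently assumes the chosen path avoids $\hat 1$, which is only clear once one already knows $\star_1\notin C_0$. Your reorganization — establishing the universal lower bound $L(\vv)\ge -\rank(\vv)$ first (from a maximal chain from $\hat 0$, independent of any facet structure), then proving disjointness of $C_0$ and $C_1$ by contradiction from a \emph{simple} $-1$-path (whose initial segment automatically avoids $\hat 1$), and only afterwards upgrading the bound to an equality on $C_0$ — removes that circularity cleanly. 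For the order-ideal claim, the paper uses a global argument (concatenate maximal chains from $\hat 0$ to $\vv_\ell$ to $\vv_j$; the sum forces all labels to be $-1$), whereas you use a local cover-by-cover argument (the universal bound plus $M(u\to\vv)\ge -1$ pins down $L(u)=-\rank(u)$ and hence $M(u\to\vv)=-1$); the two are equivalent, with yours requiring an implicit induction on covers but being somewhat more modular. Both proofs reach the description of $C_0$ as the full subquiver on $S\sqcup\{\star_0\}$ in essentially the same way.
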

\begin{remark} 
Note that as the lemma says, the vertices of $C_0$ form a lower order ideal in $\hat{P}$
and the vertices of $C_1$ form a filter in $\hat{P}$.
The lemma also says that $C_0$ is a full subquiver of $Q_{\hat{P}}$, though, while $C_1$ need not be.
	This reflects the asymmetry inherent in the definition of a ranked poset. Note also that $C_0$ and $C_1$ need not be distinct if $P$ is not ranked. An example of this is shown in Figure~\ref{fig:nonranked}, on the left-hand side.
\end{remark}
\begin{proof}
Note that we can compute the vertex coordinates of all vertices of 
$Q_{\hat{P}}$ that lie in $C_0$, since the vertex $\star_0$ (which gets
vertex-coordinate equal to $0$) lies in $C_0$ by definition and 
whenever two vertices are connected by an arrow in $C_0$, their 
vertex coordinates differ by $-1$.

Since $P$ is ranked, we can use this to show for each vertex $\vv_j$ in $C_0$ that
$L(\vv_j) = -\rank(\vv_j)$. Namely, let $r=\rank(\vv_j)$ and 
choose a path in $C_0$ (not necessarily oriented) connecting
$\star_0$ to $\vv_j$.
Going from $\star_0$ to $\vv_j$ along this path, if we traverse a total of $s$
edges in the forward direction, then $s\ge r$ and we must also traverse a  total of
$(s-r)$ edges in the backward direction (to be able to reach the rank $r$ element $\vv_j$).  Computing vertex coordinates along the way, we get 
$L(\vv_j) = -s+(s-r)=-\rank(\vv_j)$.
Note that this also implies that $\star_1$ cannot lie in $C_0$,
	since $L(\star_1)=0$.

We can now deduce the description of $C_0$. Consider an element $\vv_j$ of $S$ and suppose we have another element $\vv_\ell\in P$ with $\vv_\ell<\vv_j$.  Note that $k:=\rank(\vv_\ell)<\rank(\vv_j)=r$. Choose a 
	maximal chain in $\hat{P}$ 
between $\hat 0$ and $\vv_\ell$. This maximal chain has $k+1$ elements (including $\hat 0$ and $\vv_\ell$), and in the quiver $Q_{\hat{P}}$ it corresponds to a directed path from $\star_0$ to $\vv_{\ell}$. We furthermore consider a maximal chain between $\vv_\ell$ and $\vv_j$ and its associated directed path in $Q_{\hat{P}}$. The concatenation of the two paths gives a directed path in $Q_{\hat{P}}$ that goes from $\star_0$ to $\vv_j$, passes through $\vv_\ell$, and has overall length $r$.    
For each arrow $a$ of this path we have an arrow label $M(a)\ge -1$, and the sum of the arrow labels equals to $L(\vv_j)$. Moreover, since $\vv_j\in S$ we know  that $L(\vv_j) =-\rank(\vv_j)=-r$. Since our path is made up of precisely $r$ arrows, to get the sum $-r$ we must have $M(a)=-1$ for each arrow $a$. Therefore the entire path lies in the facet component $C_0$. In  particular, it follows that $\vv_\ell$ lies in $C_0$. Since $\vv_\ell$ was in $P$ we now have $\vv_\ell\in S$. We see that $S$ is an order ideal for $P$. 
 
Now suppose $\vv_i$ and $\vv_j$ in $S$ are connected by an arrow $a$, so $\vv_i\lessdot v_j$. Applying the argument above with $\vv_\ell=\vv_i$ we obtain a directed path in $Q_{\hat{P}}$ going from $\star_0$ to $\vv_j$  and passing through $\vv_i$, that lies entirely in $C_0$. This path necessarily contains the arrow $a$ and therefore  $a$  lies in $C_0$ as claimed.

Finally, let us consider the facet component $C_1$ of $\star_1$. 
We saw earlier that $\star_1$ cannot be in $C_0$, so
	$C_0$ and $C_1$ are  distinct facet components. 
Moreover, by \Cref{lem:containstar}, each facet component of $Q_{\hat{P}}$ contains some starred vertex, so each vertex of $Q_{\hat{P}}$ must lie either in $C_1$ or in $C_0$. 
Since the vertices of $S$ form a lower order ideal in $P$, the complementary set of vertices (namely the normal vertices of $C_1)$ must form a filter in $P$.
\end{proof}

\begin{remark}\label{rem:graded}
If the poset $P$ in \Cref{lem:orderideal} is not just ranked but also graded,
then the proof in \Cref{lem:orderideal} extends to show that 
the facet component $C_1$ consists of 
the vertices $\{\star_1\} \cup 
\{\vv_1,\dots,\vv_n\} \setminus S $ together with 
all arrows 
joining two elements in this set.
\end{remark}

\begin{proof}[Proof of \Cref{thm:refine}]
The statement that the rays of the two fans coincide was already proved in \cref{lem:rays}. To show that each maximal cone of $\NF(\OO(P))$ is a union of maximal cones of 
	$\F_{Q_{\hat{P}}}$, it suffices to show that each maximal
	cone of $\F_{Q_{\hat{P}}}$ is contained
in a maximal cone of $\NF(\OO(P))$. Equivalently, we need to show 
	that the rays of each maximal cone of $\F_{Q_{\hat{P}}}$ 
 are a subset of the rays
of some 
 maximal cone of $\NF(\OO(P))$.

	Each maximal cone $c_M$ of $\F_{Q_{\hat{P}}}$ comes from a facet arrow-labeling $M$
of $Q_{\hat{P}}$, and the rays in $c_M$ are indexed
by those 
 arrows $a\in \Arr(Q_{\hat{P}})$ for which 
$M(a) = -1$. These are the arrows appearing in 
the facet components
of $Q_{\hat{P}}$ (with respect to $M$).

By \Cref{lem:containstar}, each facet component of $Q_{\hat{P}}$
contains some starred vertex, so the vertices of $Q_{\hat{P}}$ 
lie in either $C_1$, the facet component of $\star_1$,
or $C_0$, the facet component of $\star_0$.
By \Cref{lem:orderideal}, the vertices of the 
facet component $C_0$ form a lower order ideal of $P$, and the 
vertices of $C_1$ form a filter of $P$.
Therefore each arrow indexing a ray in $c_M$ -- 
which is by definition an arrow appearing in a facet components of $Q_{\hat{P}}$ --
connects either two vertices in the filter $I$, or two vertices
in the complement of the filter $\{\vv_1,\dots,\vv_n\} \setminus I$.
By \Cref{rem:graded},  if $P$ is graded,
then the arrows appearing in $C_i$ (for $i=0$ or $1$)
comprise \emph{all} arrows connecting two vertices in $C_i$.

Meanwhile, each maximal cone $c'_I$ of $\NF(\OO(P))$ 
corresponds to a 
vertex 
 of $\OO(P)$, which is the 
characteristic function $\chi_I$ of a filter $I$ of $P$.
The rays in $c'_I$ correspond to the 
edges $\vv_i \lessdot \vv_j$ of the Hasse diagram of $\hat{P}$,
where $\chi_I(\vv_i) = \chi_I(\vv_j)$.
Therefore each ray of $c'_I$ corresponds to an arrow in $\Arr(Q_{\hat{P}})$
which connects either two vertices in the filter $I$
or two vertices in the complement of the 
filter $\{\vv_1,\dots,\vv_n\} \setminus I$.

This shows that the set of rays in a maximal cone 
$c_M$ 
	of $\F_{Q_{\hat{P}}}$ 
 are a subset of the rays
of a corresponding maximal cone 
 $c'_I$ of $\NF(\OO(P))$, with equality when $P$ is graded.
\end{proof}

If we combine \Cref{thm:refine} and 
\Cref{t:reflexive}, we obtain the 
following result, which also follows from 
\cite[Remark 1.6]{HH}.
\begin{corollary}
Let $P$ be a graded finite poset.  Then 
$\NF(\OO(P))$ coincides with 
	$\F_{Q_{\hat{P}}}$.
It follows that the order polytope $\OO(P)$ is combinatorially equivalent
to the polar dual of the root polytope $Q_{\hat{P}}$.
\end{corollary}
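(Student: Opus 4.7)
\medskip

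\noindent\textbf{Proof proposal.}
The plan is to deduce the corollary almost immediately from \cref{thm:refine} together with standard duality between face fans and normal fans. First I would observe that a graded poset is automatically ranked, so the hypotheses of \cref{thm:refine} apply. The ``moreover'' clause of \cref{thm:refine} then gives the first assertion, $\NF(\OO(P)) = \F_{Q_{\hat{P}}}$, with no further work required.

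For the second assertion, I would use the following standard fact: if $\PP$ is a lattice polytope of full dimension containing the origin in its interior, then the face fan $\F(\PP)$ coincides with the normal fan $\NF(\PP^*)$ of its polar dual. We know from \cref{t:reflexive} together with \cref{lem:0interior} that $\NP(Q_{\hat P})$ is reflexive (in particular full-dimensional) and contains the origin in its interior, so this duality applies with $\PP = \NP(Q_{\hat{P}})$. Combining with the first assertion gives
\[
\NF(\OO(P)) \;=\; \F_{Q_{\hat{P}}} \;=\; \F(\NP(Q_{\hat{P}})) \;=\; \NF\bigl(\NP(Q_{\hat{P}})^*\bigr).
\]
Since two polytopes with the same normal fan have the same face lattice, $\OO(P)$ is combinatorially equivalent to $\NP(Q_{\hat{P}})^*$.

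There is really no obstacle here, since the hard work has been done in \cref{thm:refine}; the only thing to double-check is the face-fan/normal-fan duality I am invoking, which is completely standard (see e.g.\ the discussion surrounding polar duality in \cref{def:reflexive}). I would simply state this fact and cite a standard reference rather than proving it.
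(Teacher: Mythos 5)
Your argument is correct and is precisely the approach the paper takes: the paper's own "proof" is just the sentence preceding the corollary, which says the result follows by combining \cref{thm:refine} and \cref{t:reflexive}. You have filled in the details — invoking the "moreover" clause of \cref{thm:refine} for the fan equality, and using reflexivity of $\NP(Q_{\hat P})$ together with the standard face-fan/polar-dual duality $\F(\PP) = \NF(\PP^*)$ (which indeed holds under the paper's conventions, $\PP^* = \{y : x\cdot y \geq -1\}$ paired with the \emph{inner} normal fan) to conclude combinatorial equivalence. No gap.
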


\section{Applications in mirror symmetry and toric geometry} 
\label{sec:toric}

In this final section 
we start by explaining how our previous results 
 are related to mirror symmetry and toric geometry.  
We show that when $Q$ is a strongly-connected
starred quiver, the toric variety associated to the fan $\F_Q$ has a {small} toric desingularisation. 
 We also apply our constructions
 to describe the Picard group for toric varieties arising from quivers, with a particular focus
 on the quivers coming from ranked posets.

\subsection{Quiver Laurent polynomials, Newton polytopes,
and superpotential polytopes}
In the study of mirror symmetry for Fano varieties such as partial flag varieties, quiver flag varieties, and Grassmannians,
there are naturally associated Laurent polynomial \emph{superpotentials}
which have the form 
\begin{equation}\label{eq:super}
S(x_1,\dots,x_n) \in 
	\CC[x_1^{\pm 1},\dots,x_n^{\pm 1}][q_1,\dots,q_d].
\end{equation}
Given such a superpotential, there are two polytopes that one can associate,
the \emph{Newton polytope} and the \emph{superpotential polytope}.
\begin{definition}[Newton polytope]
The \emph{Newton polytope} $\Newt_S \subset \R^n$
is the convex hull of the exponent vectors of the Laurent monomials
in $S(x_1,\dots,x_n)$, where the exponent vector
	of $c(q_1,\dots,q_d) x_1^{a_1} \dots x_n^{a_n}$ for $c(q_1,\dots,q_d)
	\in \CC[q_1,\dots,q_d]$ is 
$(a_1,\dots,a_n)$.
\end{definition}

The above definition defines a polytope as the convex hull of points,
one for each Laurent monomial in $S(x_1,\dots,x_n)$.  On the other
hand, we can define a different polytope -- the \emph{superpotential polytope} -- by \emph{tropicalizing} the superpotential.  This cuts out a 
polytope that lies in the dual space by inequalities associated to the Laurent monomials in $S(x_1,\dots,x_n)$. Our terminology is following \cite{RW}. 

\begin{definition}[Superpotential polytope]=
\label{def:super}
Let 
$S(x_1,\dots,x_n) \in 
\CC[x_1^{\pm 1},\dots,x_n^{\pm 1}][q_1,\dots,q_d]$ be a Laurent polynomial
with positive coefficients,
and choose
real numbers $\mathbf{r} = (r_1,\dots,r_d)$.
Set $\Trop(x_i)=X_i$ and $\Trop(q_i)=r_i$.
We now inductively define the tropicalization 
$\Trop(S(x_1,\dots,x_n))$
of $S(x_1,\dots,x_n)$
by requiring that if 
$\mathbf{h_1}, \mathbf{h_2}\in 
\CC[x_1^{\pm 1},\dots,x_n^{\pm 1}][q_1,\dots,q_d]$ are Laurent polynomials
	with positive 
	coefficients and $a_1,a_2\in \R_{>0}$, then 
	$$\Trop(a_1 \mathbf{h_1} + a_2 \mathbf{h_2}) = 
	\min(\Trop(\mathbf{h_1}),\Trop(\mathbf{h_2})) \ \text{ and } \ 
	\Trop(\mathbf{h_1 h_2}) = \Trop(\mathbf{h_1}) + \Trop(\mathbf{h_2}).$$
We define the \emph{superpotential polytope} $\Gamma_S^{\mathbf{r}} = \Gamma_S \subset \R^n$
by  $\Trop(S(x_1,\dots,x_n)) \geq 0.$  In other words,
we impose one inequality
$$(\sum_{i=1}^d {\ell_i}r_i) +  (\sum_{j=1}^{n} m_j X_j) \geq 0$$
for each Laurent monomial summand 
$\prod_{i=1}^d q_i^{\ell_i} \prod_{j=1}^{n} x_j^{m_j}$
of $S(x_1,\dots,x_n)$.
\end{definition}

\begin{example}\label{ex:twopolytopes}
Let $$S(x_1,\dots,x_6)=x_1 +\frac{x_2}{x_1}+\frac{x_3}{x_2}+\frac{x_4}{x_3}+\frac{q_1}{x_1}+\frac{x_5}{x_1}+\frac{x_6}{x_2}+\frac{x_6}{x_5}+\frac{q_2}{x_6}.$$
Then the Newton polytope
is $$\Newt_S = \conv(e_1, e_2-e_1, e_3-e_2,
e_4-e_3, -e_1, e_5-e_1, e_6-e_2, e_6-e_5, -e_6) \subset \R^6.$$
        Meanwhile, if we let $\mathbf{r}=(r_1,r_2)=(1,1)$, then the
        superpotential polytope  $\Gamma_S^{\mathbf{r}} \subset \R^6$ is cut
        out by the inequalities
\begin{align*}
        &X_1  \geq 0 &\qquad   &X_2 - X_1  \geq 0  &\qquad  & X_3-X_2  \geq 0 \\
         &X_4 -X_3  \geq 0 &\qquad   & 1-X_1  \geq 0 &\qquad &X_5 -X_1  \geq 0\\
         &X_6-X_2  \geq 0 &\qquad &X_6-X_5  \geq 0  &\qquad &1-X_6  \geq 0
\end{align*}
\end{example}

In the context of mirror symmetry for the Fano varieties mentioned at the start of this section,
many of the associated superpotentials can be read off from a 
strongly-connected starred quiver $Q$
(cf \cref{def:starconnected}). 
In particular, such a quiver  $Q$ 
gives rise
to the Laurent polynomial 
\begin{equation}\label{e:SQ}S_Q(x_1,\dots,x_n):= \sum_{a:\vv_i\to \vv_j} 
\frac{x_j}{x_i}
+\sum_{a:\star_i\to \vv_j} 
\frac{1}{\wt(\star_i)} \cdot x_j
+ \sum_{a: \vv_i\to \star_j} 
\wt(\star_j) \cdot \frac{1}{x_i},
\end{equation} 
where we sometimes refer to $\wt(\star_i)$ as a `quantum parameter'
and write $\wt(\star_i) = q_i$.
Note that the above expression consists of 
 one `head over tail' Laurent monomial for each arrow of $Q$,
with a coordinate $x_i$ associated to each $\bullet$-vertex $\vv_i$.
As an example, the superpotential from \cref{ex:twopolytopes} comes from
the quiver in \cref{fig:Schubert}.
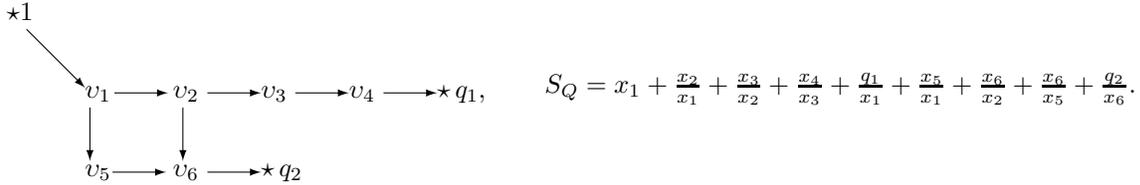
\begin{figure}[h!]
\begin{center}
\[\adjustbox{scale=.80}
{\begin{tikzcd}
	{\star 1} \\
	& {\vv_1} & {\vv_2} & {\vv_3} & {\vv_4} & {\star q_1} \\
	& {\vv_5} & {\vv_6} & {\star q_2}
	\arrow[from=1-1, to=2-2]
	\arrow[from=2-2, to=2-3]
	\arrow[from=2-2, to=3-2]
	\arrow[from=2-3, to=2-4]
	\arrow[from=2-3, to=3-3]
	\arrow[from=2-4, to=2-5]
	\arrow[from=2-5, to=2-6]
	\arrow[from=3-2, to=3-3]
	\arrow[from=3-3, to=3-4]
\end{tikzcd}}
\quad\ S_Q=x_1 +\frac{x_2}{x_1}+\frac{x_3}{x_2}+\frac{x_4}{x_3}+\frac{q_1}{x_1}+\frac{x_5}{x_1}+\frac{x_6}{x_2}+\frac{x_6}{x_5}+\frac{q_2}{x_6}.\]
 \end{center}
	\caption{The starred quiver $Q$ and the Laurent polynomial $S_Q$.
	This Laurent polynomial is
	associated to the Schubert variety $X_{(4,2)}$
	in the Grassmannian $Gr_{2}(\C^8)$ \cite{RW2}\label{fig:Schubert}}
\end{figure}

\subsection{The toric variety $Y(\F_Q)$ and connections with mirror symmetry} 
Let us consider the Laurent polynomial  $S_Q(x_1,\dots,x_n)$ coming from a starred quiver $Q$. Note that the Newton polytope 
$\Newt_{S_Q}$ of the quiver Laurent polynomial 
$S_Q(x_1,\dots,x_n)$ is precisely 
the root polytope $\NP(Q)$ described in 
\cref{def:NP}. Therefore when $Q$ is 
 strongly-connected, we know from
\cref{thm:reflexive} that 
$\Newt_{S_Q} = \NP(Q)$ is reflexive and terminal. Letting $\F_Q$ denote the face fan of $\NP(Q)$, we can now associate the Gorenstein Fano toric variety $Y(\F_Q)$ to $S_Q$, and we call $S_Q$ the mirror superpotential for $Y(\F_Q)$. Note that $Y(\F_Q)$ is not quite smooth in general, since it may have terminal singularities, and not all aspects of mirror symmetry make sense directly for $Y(\F_Q)$. 

In the smooth case, study of the Laurent polynomial mirrors for toric Fano manifolds $Y$ and their properties (relations to $I$-functions, quantum cohomology) goes back to \cite{Batyrev0, Batyrev, Givental:ICM, Givental:toricCI, Givental:equivariant, OT, HoriVafa2000}. More generally, if  $X$ is a smooth Fano variety with a suitable flat toric degeneration whose central fiber is $X_0=Y(\F_Q)$, then $S_Q$ may also encapsulate Gromov-Witten invariants (`quantum periods') of~$X$, see \cite[Section~4]{KP:whyandhow}. See also \cref{ex:Galkin} and \cite[Example~5.7]{Galkin}, and more generally \cite[Conjecture 3.3 and Remark 3.4]{Golyshev}. We note also that $S_Q$ is automatically `maximally mutable', see \cite[Section~5]{CKPT}.

An early example of quiver Laurent polynomial mirrors  was Givental's quiver Laurent polynomial superpotential for the full flag variety \cite{Givental:fullflag} and \cite{BC-FKvSGrass, BC-FKvS} which gave partial flag variety analogues and related the construction to toric degenerations. These examples have also been extended to some homogeneous spaces in other Lie types, such as quadrics and maximal orthogonal Grassmannians \cite{PRW, PR, Spacek, SW1, SW2}. Also Kalashnikov~\cite{Kalashnikov} constructed Laurent polynomial mirrors for certain quiver flag varieties using toric degenerations. 

Most recently we constructed mirrors for Grassmannian Schubert varieties that restrict to quiver-Laurent polynomials on an appropriate chart \cite{RW2}; \cref{fig:Schubert} shows an example. Though Grassmannian Schubert varieties are not smooth, for the calculation of quantum periods of smooth Calabi-Yau subvarieties of a variety $X$, the smoothness assumption on $X$ can be relaxed. See \cite{Miura:minuscule}, which used Laurent polynomial mirrors of the form $S_Q$ to study mirror symmetry for Calabi-Yau $3$-folds in Gorenstein Fano Schubert varieties.

On the symplectic side, if there is a symplectic Fano manifold $X$ with a degeneration to $Y(\F_Q)$ satisfying certain technical conditions then the Laurent polynomial $S_Q$ obtains a Floer theoretic interpretation, see \cite[Theorem~1]{Nishinou}, and \cite[Theorem 4.4]{BGM22}. Note that \cite[Theorem~1]{Nishinou} requires the existence of a small resolution of the central toric fiber. 
Our first result in this section will be to prove the existence of such a resolution  for $Y(\F_Q)$, 
see \cref{p:desing}.
In this symplectic context, the existence of a degeneration to a $Y(\F_Q)$ (and the resulting Floer-theoretic interpretation of $S_Q$) then implies the existence of a non-displaceable Lagrangian torus in $X$, see \cite[Corollary~2]{Nishinou}.

\subsection{A small resolution of the toric variety $Y(\F_Q)$}\label{s:smalldesing}

In this section we show that 
when $Q$ is a strongly-connected
starred quiver, 
we can resolve the singularities of $Y(\F_Q)$ to get a smooth toric variety without changing the rays of the fan. 
In other words, there is a \textit{small} toric desingularisation of $Y(\F_Q)$. Such a resolution was constructed explicitly for a specific example in \cite[Section 3]{BC-FKvS}. We deduce its existence in our more general setting by making use of the \emph{maximal projective crepant partial (MPCP)} 
desingularisation of \cite{Batyrev}, which relies on a construction from \cite{GKZ89}. 

\begin{thm}\label{p:desing} 
	Consider any strongly-connected starred quiver $Q$. Let $\F_Q$ be the  face fan of the root polytope $\NP(Q)$. There exists a refinement $\widehat \F_Q$ of $\F_Q$ such that  $Y(\widehat \F_Q)\to Y(\F_Q)$ is a small crepant toric desingularisation.  
\end{thm}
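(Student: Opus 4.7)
My plan is to apply Batyrev's maximal projective crepant partial (MPCP) desingularisation construction \cite{Batyrev}, which rests on the secondary polytope machinery of Gelfand--Kapranov--Zelevinsky \cite{GKZ89}. Since $\NP(Q)$ is reflexive by \cref{t:reflexive}, MPCP furnishes a refinement $\widehat \F_Q$ of $\F_Q$ whose maximal cones are the cones over the simplices of some coherent (regular) triangulation of $\partial \NP(Q)$, where the vertices of the triangulation are required to be lattice points of $\partial\NP(Q)\cap\Z^n$. The induced toric morphism $Y(\widehat\F_Q)\to Y(\F_Q)$ is automatically crepant by the construction. Existence of such a regular triangulation is classical from GKZ's secondary polytope, which I would invoke as a black box.

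The next step is to observe that the morphism is small. By \cref{p:NPlatticepts} together with \cref{lem:0interior}, the polytope $\NP(Q)$ is terminal, so the only non-origin lattice points of $\NP(Q)$ are the vertices $u_a$ for $a\in\Arr(Q)$. Hence the triangulation produced by MPCP uses only vertices of $\NP(Q)$, and consequently $\widehat\F_Q$ has exactly the same set of rays as $\F_Q$. No new torus-invariant divisors are introduced, which is the definition of smallness.

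What remains is to show that $Y(\widehat\F_Q)$ is actually smooth. For this I would appeal to \cref{r:TU}: the matrix $V$ whose rows are the coordinates of the vertices of $\NP(Q)$ is totally unimodular. Every maximal cone $\sigma$ of $\widehat\F_Q$ is simplicial of dimension $n=\dim\NP(Q)$ (by \cref{l:full-dimensional}) and is generated by a linearly independent collection $u_{a_1},\dots,u_{a_n}$ of vertices. The determinant of the $n\times n$ matrix with these rows is a maximal minor of $V$ and therefore lies in $\{-1,0,1\}$; linear independence forces it to be $\pm 1$. So $u_{a_1},\dots,u_{a_n}$ form a $\Z$-basis of $\Z^n$ and $\sigma$ is a unimodular cone; hence $Y(\widehat\F_Q)$ is smooth.

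The step requiring the most care is the smoothness upgrade, since MPCP in general only guarantees $\mathbb{Q}$-factorial Gorenstein terminal singularities (smooth only in dimension at most three). The leverage giving smoothness in arbitrary dimension in our setting is the total unimodularity of the vertex matrix of $\NP(Q)$, combined with the terminal property that rules out any Steiner points in the triangulation. Everything else --- the existence of a regular triangulation, the crepancy of the resulting morphism, and smallness --- follows directly from the general MPCP package applied to the reflexive terminal polytope $\NP(Q)$.
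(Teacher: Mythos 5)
Your proof is correct and follows essentially the same route as the paper: invoke Batyrev's MPCP desingularisation, use terminality of $\NP(Q)$ to conclude no new rays are introduced (hence the morphism is small and crepant), and then upgrade the $\mathbb{Q}$-factorial terminal output of MPCP to genuine smoothness via total unimodularity of the vertex matrix, which forces every simplicial maximal cone to be unimodular.
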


\begin{proof} By 
\cref{cor:terminalGorenstein}
	we know that $Y(\F_Q)$ is Gorenstein with at most terminal singularities.  By
 \cite[Theorem 2.2.24]{Batyrev}, any Gorenstein toric variety has a MPCP desingularization, which will be terminal and $\mathbb Q$-factorial. Since $Y(\F_Q)$ was already terminal, it follows that any crepant partial resolution of $Y(\F_Q)$ is necessarily small. Indeed, there is no way to add new rays and stay crepant if the faces of the fan polytope have no interior lattice points. 
 Now we have a new fan $\widehat\F_Q$ which is simplicial, refines $\F_Q$, and has the same rays as $\F_Q$. Thus the primitive ray generators are vertices $u_a$ of the root polytope $\NP(Q)$. Consider now a maximal cone $C$ of $\widehat\F_Q$, and let $\{u_a\mid a\in \Arr_C\}$ denote the set of primitive generators of the rays belonging to $C$. The convex hull of $\{u_a\mid a\in \Arr_C\}\cup\{0\}$ is a simplex. Consider the quiver $Q'$ obtained from $Q$ by removing the arrows not belonging to $\Arr_C$. The associated matrix whose row vectors are the vertices of $\NP(Q')$, is square (since the cone $C$ was simplicial) and totally unimodular, see \cref{r:TU}. Therefore its determinant must be $\pm 1$ or $0$. However, it cannot be $0$ since the cone was full-dimensional. Therefore, it must be $\pm 1$ which implies that the cone $C$ is regular. It follows that $Y(\widehat\F_Q)$ is smooth.  
 \end{proof}

Recall that a lattice polytope $\mathbf P\in\R^n$ is said to have the \emph{integer decomposition property (IDP)} if for any $k\in\Z_{>0}$ and any element $\vv\in (k\mathbf P)\cap\Z^n$, the element $\vv$ can be represented as the sum of $k$ lattice points from $\mathbf P$. 

\begin{cor}\label{c:RootIDP} For any strongly-connected starred quiver $Q$, the polytope $\NP(Q)$ has a triangulation into unimodular simplices and satisfies the integer decomposition property. 
\end{cor}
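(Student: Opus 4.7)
The strategy is to extract both the unimodular triangulation and the IDP property directly from the small crepant toric desingularisation $\widehat{\F}_Q$ constructed in \cref{p:desing}. Recall that $\widehat{\F}_Q$ is a smooth refinement of $\F_Q$ with the same rays, so every maximal cone $C\in\widehat{\F}_Q$ is simplicial with primitive ray generators $u_{a_1},\ldots,u_{a_n}$ (for some $a_1,\ldots,a_n\in\Arr(Q)$) that form a $\Z$-basis of $\Z^n$.

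First I would associate to each maximal cone $C$ the lattice simplex
\[ \sigma_C := \conv(\{0,u_{a_1},\ldots,u_{a_n}\}). \]
Unimodularity of $C$ gives $|\det(u_{a_1},\ldots,u_{a_n})|=1$, so $\sigma_C$ is a unimodular lattice simplex. To see that the collection $\{\sigma_C\}$ forms a triangulation of $\NP(Q)$, I would observe that each $C$ is contained in some maximal cone of $\F_Q$, which by definition of the face fan is the cone from $0$ over a facet $F$ of $\NP(Q)$; truncating $C$ by the supporting hyperplane of $F$ yields exactly $\sigma_C$. Since $\widehat{\F}_Q$ is a complete fan refining $\F_Q$, the simplices $\sigma_C$ cover $\NP(Q)$ and glue along common faces, producing the desired unimodular triangulation. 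Note that by \cref{t:reflexive} (together with \cref{lem:0interior}), the lattice points of $\NP(Q)$ are exactly $\{0\}\cup\{u_a\mid a\in\Arr(Q)\}$, so this triangulation uses every lattice point of $\NP(Q)$ as a vertex.

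For the IDP claim, I would invoke the classical fact that any lattice polytope admitting a unimodular triangulation has the integer decomposition property: if $p\in k\cdot\NP(Q)\cap\Z^n$, then $p/k$ lies in some simplex $\sigma_C$, and writing $p$ in barycentric coordinates with respect to $\sigma_C$ and then using that the vertices of $\sigma_C$ form an affine lattice basis gives a decomposition of $p$ as a sum of $k$ lattice points of $\NP(Q)$. The main technical point is verifying that the cones of $\widehat{\F}_Q$ really do intersect $\NP(Q)$ in the claimed simplices, but this is immediate from reflexivity of $\NP(Q)$ and the fact that $\F_Q$ is by construction the face fan of $\NP(Q)$.
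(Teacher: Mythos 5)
Your proof is correct and takes essentially the same approach as the paper: both extract the unimodular triangulation from the smooth refinement $\widehat{\F}_Q$ of \cref{p:desing} by taking, for each maximal cone, the convex hull of $\mathbf 0$ and the primitive ray generators, and both then deduce IDP from the existence of a unimodular triangulation. You simply spell out a few verifications (that each cone truncates to the claimed simplex, and how the barycentric-coordinate argument gives IDP) that the paper leaves implicit.
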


\begin{proof} A triangulation of $\NP(Q)$ into unimodular simplices arises in the proof of \cref{p:desing}. Namely, for any cone of $\widehat\F_Q$ the convex hull of the primitive generators of the rays together with $\mathbf 0$ gives a unimodular simplex. These form the simplices of the triangulation. Each of these has the IDP, and the second statement also follows.  
\end{proof}

\begin{remark}\label{r:rootIDP} In fact, it is possible to show in another way that the polytope $\NP(Q)$ has a unimodular triangulation (and thus the IDP), whenever $Q$ contains an oriented cycle or an oriented path $\pi$ from a starred vertex to a starred vertex. Namely, in this case the origin $\mathbf 0$ lies in $\NP(Q)$ (since $\sum_{a\in\pi} u_a=\mathbf 0$), and whenever  $\NP(Q)$ contains the origin, it has a unimodular triangulation by \cite[Theorem 3.9]{HPPS}. 
\end{remark}

\begin{remark}\label{rem:superIDP}
All superpotential polytopes (see \cref{def:super})
have the integer decomposition property. 
The normal vectors to the facets of a superpotential polytope are the vertices
of the root polytope, and the matrix encoding the vertices of the root polytope is 
totally unimodular, hence we can apply \cite[Theorem~2.4]{HPPS}. 
This says that each superpotential polytope has a regular unimodular triangulation.
\end{remark}

\subsection{The toric variety $Y(\F_Q)$ when $Q$ is planar or comes
from a ranked poset}
The relationship between quiver Laurent polynomials $S_Q$ and toric geometry is particularly beautiful when the quiver $Q$ is planar, or when
it comes from a ranked poset, as we will explain; the quivers we study in \cite{RW2} are both.

\subsubsection{When $Q$ is planar}
When the quiver $Q$ is planar (as it is in \cite{RW2}), we get an interpretation
of the toric variety $Y(\F_Q)$ as a \emph{toric quiver variety} for the planar dual quiver $Q^\vee$, i.e. a 
\emph{quiver moduli space}
\cite{King:quivermoduli} for $Q^{\vee}$ parameterizing representations of 
$Q^{\vee}$ with dimension vector $(1,1,\dots,1)$.
To see this, recall that the projective variety associated to the flow polytope $\Fl_{Q^{\vee}}$
for $Q^{\vee}$ is the quiver moduli space (for the canonical weight) associated to $Q^{\vee}$
 \cite{AltmannvanStraten}.  This is also the toric variety associated to the normal fan 
 $\NF(\Fl_{Q^{\vee}})$.
Since $\F_Q$ agrees with $\NF(\Fl_{Q^{\vee}})$
(by \cref{mainthm:dual}), we obtain an interpretation of 
the toric variety $Y(\F_Q)$ as a quiver moduli space for $Q^{\vee}$.
 These varieties have been studied extensively, see  \cite{Hille:toricquivervarieties,
 Hille:qcp,
 Hille,
 Joo:thesis,
 ANSW,DomokosJoo,nasr2022smooth} as well as in \cite{Kalashnikov,CDK} and \cite{Miura:CYinHibi, Miura:thesis} where plane quivers and planar duality play a role. 

\subsubsection{When $Q$ comes from a ranked poset}\label{s:hibiintro}
Suppose that $Q$ comes from the bounded extension $\hat{P}$ of a poset $P$ 
(as in \cref{rem:quiverfromposet}), and we let the weight of the starred vertices
associated to $\hat{1}$ and $\hat{0}$ be $q$ and $1$, respectively.
Then  the {superpotential polytope}
$\Gamma_Q^{\mathbf{r}}:=\Gamma_{S_Q}^{\mathbf{r}}$  
with $\mathbf{r}=(1)$, let us denote it $\Gamma_Q$,
agrees with the order polytope $\OO(P)$ of $P$.
For example, the quiver from \cref{fig:Schubert} can be identified with
the Hasse diagram of 
a poset $P$ on $\{\vv_1,\dots,\vv_6\} \cup \{\hat{0},\hat{1}\}$,
where the starred vertex labeled $1$ is identified with $\hat{0}$,
the starred vertices labeled  $q_1, q_2$ are identified with $\hat{1}$,
and the arrows point from smaller to larger elements in the poset, and the superpotential polytope $\Gamma_Q$ is exactly the order polytope of $P$, see 
\cref{ex:twopolytopes}.
The fact that the superpotential polytope agrees with the order polytope 
 allows one to deduce 
nice properties of $\Gamma_Q$, e.g. its volume is the number of linear
extensions of the poset \cite{order}.

Given that there are two polytopes that we can  associate
to the quiver Laurent polynomial $S_Q$ -- namely, the 
 Newton polytope $\Newt_{S_Q} = \NP(Q)$ of $S_Q$ and the
superpotential polytope $\Gamma_Q$
-- it is 
natural to ask how these two polytopes are related.  
The answer is provided by 
our \cref{thm:D} (see also \cref{mainthm:poset}):
when $Q$ comes from a ranked poset $P$,
the face fan 
$\mathcal{F}_Q$ of $\NP(Q)$
refines the (inner) normal fan $\NF(\Gamma_Q) = \NF(\OO(P))$ while preserving the rays,
and when $P$ is graded, we get equality.
It follows that when $P$ is ranked, 
the toric variety $Y(\mathcal{F}_Q)$ of $\mathcal{F}_Q$ provides a
small partial desingularization
of the toric variety $Y(\NF(\Gamma_Q))$ associated to the normal fan of the superpotential polytope, see \cref{fig:motivation}  
for a summary. Note that $Y(\NF(\Gamma_Q))=Y(\NF(\OO(P)))$ is the projective toric variety associated to $\OO(P)$, also denoted $Y_{\OO(P)}$, and is commonly known as a \emph{Hibi toric variety} as it agrees with the toric variety introduced in \cite{hibi1987distributive}.

\begin{figure}[h]
\[\begin{tikzpicture}[node distance=2cm, auto]
  \node[draw, align=center] (C) {$Q=Q_{\hat P}$  from ranked poset $P$\\ with quiver Laurent polynomial $S_Q$};
  \node[draw, align=center, xshift=-1cm] (P) [below left of=C, yshift=-.3cm] {$\NP(Q)=$ Newton\\ polytope of $S_Q$};
  \node[draw,align=center,xshift=1.5cm,yshift=-.3cm] (Ai) [below right of=C] {Order polytope $\mathcal O(P)=$\\superpotential polytope of $S_Q$};
  \draw[->] (C) to node {} (Ai);
  \draw[->] (C) to node [swap] {} (P);
\node[draw,yshift=.5cm, align=center] (T) [below of=P] 
{Face fan\\$\F(\NP(Q))$};
\node[xshift=.5cm] (S) [right of=T] {refines};
\node[draw,xshift=-.5cm,yshift=.5cm,align=center] (U) [below of=Ai] {Normal fan\\$\NF(\mathcal O(P))$};
\end{tikzpicture}
\]
	\caption{How the root polytope and order polytope are related to 
	the Laurent polynomial superpotential $S_Q$ of the quiver associated
	to a ranked poset}
	\label{fig:motivation}
\end{figure}
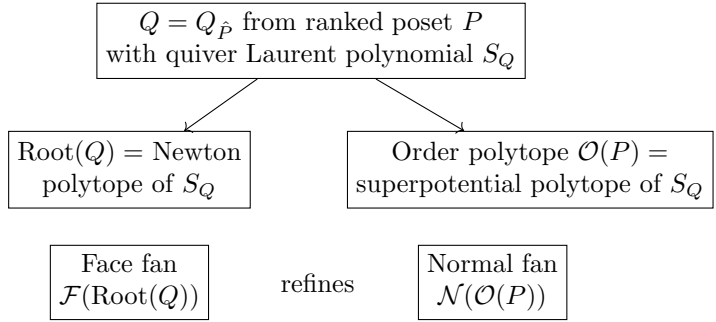

We now show how to describe the Picard group of the toric variety $Y(\F_Q)$, especially in the case where the 
starred quiver $Q$ comes from a ranked poset.

\subsection{The Picard group
$\Pic(Y(\mathcal F_Q))$ and a canonical extension of a ranked poset}\label{sec:RankedCartier}

Recall that $Y(\mathcal F_Q)$ denotes the toric variety associated to  $\F_Q$, 
the face fan of $\NP(Q)$. We now consider the case where $Q$ is the starred quiver associated 
to an extension of the Hasse diagram of a ranked poset $P$.  
In this section we first 
give a general algorithm for determining the Cartier divisors of $Y(\mathcal F_Q)$, 
and hence the Picard group, for arbitrary strongly-connected starred quivers $Q$. 
We then 
introduce a new \emph{canonical extension} of the ranked poset $P$.
Finally we use the canonical extension to  describe concretely the Picard group
of $Y(\mathcal F_Q)$ whenever $Q$ comes from a ranked poset.

\subsubsection{Cartier divisors in $Y(\mathcal F_Q)$ for arbitrary strongly-connected starred quivers~$Q$}

By \cref{thm:reflexive} and \cref{cor:facets}, we have the following result.
\begin{corollary} \label{cor:terminalGorenstein}
Let $\mathcal F_Q$ be the face fan of $\NP(Q)$ for a starred quiver $Q$. If $Q$ is strongly-connected then the toric variety $Y(\mathcal F_Q)$ associated to the fan $\mathcal F_Q$ is terminal Gorenstein. 
Moreover, the maximal cones of $\mathcal F_Q$ are in bijection with facet arrow-labelings of $Q$,
	where the maximal cone
associated to a facet arrow-labeling $M$ is the cone spanned by 
$$\{u_a \ \vert \ M(a) = -1\}.$$
\end{corollary}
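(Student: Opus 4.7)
The statement is essentially a packaging of results already established in the excerpt, so the plan is to assemble them. First I would address the geometric claim that $Y(\mathcal F_Q)$ is terminal Gorenstein. Recall that for any lattice polytope $\PP \subset \R^n$ containing the origin in its interior, the toric variety $Y(\mathcal F(\PP))$ associated to the face fan of $\PP$ is Gorenstein Fano precisely when $\PP$ is reflexive (this is the Batyrev criterion: reflexivity of $\PP$ is equivalent to the primitive ray generators of $\mathcal F(\PP)$ lying on an integral hyperplane at lattice-distance one from the origin on each facet, which is the anticanonical Cartier condition). Similarly, $Y(\mathcal F(\PP))$ has at worst terminal singularities iff $\PP \cap \Z^n = \{\mathbf 0\} \cup \mathrm{vert}(\PP)$, i.e.\ $\PP$ is terminal in the sense of \cref{def:terminal}.

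Both of these polytope conditions for $\NP(Q)$ have already been verified when $Q$ is strongly-connected: reflexivity is \cref{t:reflexive}, and terminality (together with the fact that $\mathbf 0$ lies in the interior, which is needed even to form the face fan) is \cref{lem:0interior} combined with \cref{cor:terminal}. Thus $Y(\mathcal F_Q)$ is Gorenstein and has at worst terminal singularities, proving the first claim.

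For the second claim, I would use the standard correspondence between maximal cones of the face fan $\mathcal F(\PP)$ and facets of $\PP$: each facet $F$ of $\PP$ determines the maximal cone $\cone(F) = \R_{\geq 0}\cdot F$, and this map is a bijection because $\PP$ contains the origin in its interior. Applying this to $\PP = \NP(Q)$, the maximal cones of $\mathcal F_Q$ are in bijection with the facets of $\NP(Q)$. But \cref{cor:facets} already shows that each facet of $\NP(Q)$ is defined by a linear functional $L$ whose associated arrow-labeling $M_L$ is a facet arrow-labeling, and that the vertices of the corresponding facet are exactly $\{u_a \mid M_L(a) = -1\}$. Conversely, each facet arrow-labeling $M$ arises from such an $L$ (by \cref{lem:bullet-0sum}, $M = M_L$ for some $\bullet$-labeling $L$) and defines a face of maximal dimension by the maximality condition in \cref{def:facet}. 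Hence the maximal cones of $\mathcal F_Q$ are exactly $\cone\{u_a \mid M(a) = -1\}$ as $M$ ranges over facet arrow-labelings, which is the stated bijection.

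No step here is really the obstacle, since the hard combinatorial work (characterizing facets via facet arrow-labelings, checking reflexivity and terminality) was already done in \cref{sec:root}. The only mild subtlety is emphasizing that strong-connectedness is what lets us invoke both \cref{t:reflexive} (reflexivity) and \cref{lem:0interior} (interior containment, needed for the face fan to be well-defined and for terminality); without it the face fan could degenerate.
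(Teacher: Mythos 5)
Your proposal is correct and takes essentially the same route as the paper: the paper justifies the corollary with a one-line citation of Theorem~A (for reflexivity and terminality of $\NP(Q)$, hence the Gorenstein and terminal properties of $Y(\F_Q)$) together with \cref{cor:facets} (for the bijection between facets and facet arrow-labelings), and your argument simply unpacks that citation, including the standard translations between polytope reflexivity/terminality and the Gorenstein/terminal properties of the associated face-fan toric variety and the standard facet--maximal-cone bijection for face fans.
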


Note that the facet arrow-labelings are in bijection with the torus-fixed points of  $Y(\mathcal F_Q)$. 

\begin{lemma}\label{l:Qrelations} 
For any subset $S$ of $\Arr(Q)$ let $\Pi(S)$ be the set of paths $\pi$ 
	with support $S$ 
in the underlying graph of $Q$ such that either $\pi$ is a closed path, or a path whose endpoints are starred vertices. The linear relations between the vectors $\{u_a\mid a\in S\}$ are generated by relations of the form
\[
\sum_{a\in\pi} \varepsilon(a)u_{a}=0,
\]
where  $\pi\in\Pi(S)$ and the sign $\varepsilon(a)=\pm 1$ is defined to be $+1$  if the orientation of the arrow $a$ agrees with the orientation of the path $\pi$, and $-1$ if the orientations are opposite.
\end{lemma}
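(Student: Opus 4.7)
The plan is to interpret the lemma as the classical graph-theoretic fact that the cycle space of a graph is generated by its simple cycles. First I would reduce to the case where $Q$ has a unique starred vertex by replacing $Q$ with $\Qbar$: we have $\mathbf{N}_{Q,\R} = \mathbf{N}_{\Qbar,\R}$ by \cref{rem:one}, the vectors $u_a$ are unchanged, and each path in $Q$ joining two starred vertices becomes a closed path in $\Qbar$. Thus $\Pi(S)$ is in bijection with the set of closed paths in the underlying graph of $\Qbar$ whose arrows are contained in $S$, and it suffices to prove the claim for $\Qbar$.

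Next I would identify the space of linear relations among $\{u_a \mid a \in S\}$ with the cycle space of a subgraph. Using \cref{prop:NNP}, the map $\rho: \R^{\Arr(\Qbar)} \to \mathbf{N}_{\Qbar,\R}$ sending $\delta_a \mapsto u_a$ can be realized as the standard boundary map $\partial: \R^{\Arr(\Qbar)} \to \R^{V(\Qbar)}$, $\delta_a \mapsto e_{h(a)} - e_{t(a)}$ (where $h(a), t(a)$ denote the head and tail of $a$), followed by the quotient $\R^{V(\Qbar)} \to \R^{V(\Qbar)}/\R e_{\star}$. Since $\mathrm{image}(\partial)$ lies in the hyperplane $\{v : \sum_{w} v_w = 0\}$, which intersects $\R e_{\star}$ trivially, the quotient does not enlarge the kernel. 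Hence $\ker\rho = \ker\partial = H_1(\Qbar;\R)$, the cycle space of $\Qbar$. Restricting to the coordinate subspace $\R^S \subseteq \R^{\Arr(\Qbar)}$, the space of relations among $\{u_a \mid a\in S\}$ becomes the cycle space $H_1(\Qbar_S;\R)$ of the subgraph $\Qbar_S$ of $\Qbar$ obtained by keeping only the arrows in $S$.

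Finally I would invoke the standard spanning-forest argument: the cycle space of any graph is generated by the fundamental cycles associated to edges outside a chosen spanning forest, and these fundamental cycles are simple cycles. Each simple cycle in $\Qbar_S$ is an element of $\Pi(S)$, giving the required generating set. The main technical obstacle will be the bookkeeping to verify that the sign convention $\varepsilon(a) = \pm 1$ in the lemma statement agrees with the one arising from the boundary map when a cycle is traversed in a chosen direction; I expect this to be routine rather than a conceptual difficulty.
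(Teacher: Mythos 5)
The paper itself gives no proof — it states immediately after the lemma that ``\cref{l:Qrelations} is a straightforward consequence of the definition of the vectors $u_a$'' — so there is no argument to compare against; what you have done is supply the missing details, and your argument is correct. Your reduction to $\Qbar$, the identification of the kernel of $\delta_a\mapsto u_a$ with the cycle space $Z_1(\Qbar;\R)$ via the boundary map (and the observation that passing to $\R^n$ by killing the $e_\star$ coordinate does not enlarge the kernel, since $\operatorname{image}(\partial)$ sits in the zero-sum hyperplane), the restriction to the subgraph $\Qbar_S$, and the spanning-forest/fundamental-cycle basis all check out; every simple cycle of $\Qbar_S$ passes through $\star$ at most once and hence corresponds to an element of $\Pi(S)$ for the original quiver $Q$. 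Two minor points worth tidying: (i) \cref{rem:one} only asserts $\NP(Q)=\NP(\Qbar)$, not $\mathbf M_{Q,\R}=\mathbf M_{\Qbar,\R}$ or $\mathbf N_{Q,\R}=\mathbf N_{\Qbar,\R}$; the latter equality is true but should be justified directly from \cref{def:NNP} (the arrow sets coincide, and the subspace $U$ is in both cases the cycle space of $\Qbar$). (ii) The map $\delta_a\mapsto u_a$ lands in $\R^n$, whereas the quotient map $\R^{\Arr(\Qbar)}\to\mathbf N_{\Qbar,\R}$ sends $\delta_a\mapsto\tilde u_a$; you are implicitly using $\Psi^*$ from \cref{prop:NNP} to pass between the two, which is fine, but the phrasing should be made precise. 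Also, note that one could shortcut the boundary-map computation by observing that $U$ in \cref{def:NNP} is by definition spanned by the cycle vectors of closed paths and $\star$-to-$\star$ paths, which after identifying starred vertices is exactly the cycle space of $\Qbar$; your argument and this shortcut lead to the same place.
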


\cref{l:Qrelations} 
 is a straightforward consequence of the definition of the vectors $u_a$. Cartier divisors on $Y(\mathcal F_Q)$ can now be described as follows. 
\begin{prop}\label{prop:Cartier}
We use the notation of \cref{l:Qrelations}. 
	Let $a\in \Arr(Q)$, and let $D_a$ denote the Weil divisor of $Y(\mathcal F_Q)$ associated to the ray of 
	$\mathcal F_Q$ spanned by $u_a$. The Weil divisor $\sum c_a D_a$ is Cartier if and only if 
	\begin{equation}\label{eq:CartierQ}
\sum_{a\in\pi} \varepsilon(a)c_{a}=0
	\end{equation}
	whenever $\pi\in \Pi(S)$, where $S=F(M)$ for 
	$M$  a facet arrow-labeling of $Q$. 
\end{prop}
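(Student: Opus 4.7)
The plan is to apply the standard Cartier criterion from toric geometry: the Weil divisor $D = \sum c_a D_a$ on $Y(\mathcal F_Q)$ is Cartier if and only if, for every maximal cone $\sigma$ of $\mathcal F_Q$, there exists $m_\sigma \in \mathbf M_Q$ such that $\langle m_\sigma, u_a\rangle = -c_a$ for every ray $u_a$ of $\sigma$. By \cref{cor:terminalGorenstein}, the maximal cones of $\F_Q$ are exactly the cones $\sigma_M$ spanned by $\{u_a \mid a\in F(M)\}$ as $M$ ranges over facet arrow-labelings, so it suffices to translate the existence of each $m_M$ into the stated combinatorial condition.

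For the ``only if'' direction, suppose such $m_M \in \mathbf M_Q$ exists. For any $\pi \in \Pi(F(M))$ the vector identity $\sum_{a\in\pi} \varepsilon(a) u_a = 0$ from \cref{l:Qrelations} pairs under $m_M$ to give $\sum_{a\in\pi} \varepsilon(a)(-c_a) = 0$, which is precisely condition \eqref{eq:CartierQ}.

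For the ``if'' direction, assume \eqref{eq:CartierQ} holds for every $\pi\in \Pi(F(M))$. By \cref{l:Qrelations}, every $\R$-linear relation among $\{u_a\mid a\in F(M)\}$ is an $\R$-linear combination of the path relations, so the assignment $u_a \mapsto -c_a$ is consistent with every linear relation and therefore extends to a unique linear functional $m_M$ on $\operatorname{span}\{u_a \mid a\in F(M)\}$. Since $\NP(Q)$ is full-dimensional by \cref{l:full-dimensional}, the cone $\sigma_M$ over a facet of $\NP(Q)$ is itself $n$-dimensional, so this span is all of $\mathbf N_{Q,\R}$, and $m_M \in \mathbf M_{Q,\R}$ is uniquely determined with $\langle m_M, u_a\rangle = -c_a$ for every $a \in F(M)$.

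The remaining point, and the main obstacle, is to verify that $m_M$ lies in the integral dual lattice $\mathbf M_Q$. Since $\sigma_M$ is $n$-dimensional, we can choose $u_{a_1},\dotsc,u_{a_n}$ linearly independent among $\{u_a\mid a\in F(M)\}$. The vertex coordinate matrix of $\NP(Q)$ is totally unimodular by \cref{r:TU}, so the $n\times n$ submatrix with rows $u_{a_1},\dotsc,u_{a_n}$ has determinant $\pm 1$, hence these vectors form a $\Z$-basis of $\mathbf N_Q$. Pairing $m_M$ against this basis yields the integer values $-c_{a_1},\dotsc,-c_{a_n}$, so $m_M \in \mathbf M_Q$, completing the proof.
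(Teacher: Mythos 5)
Your proposal is correct and follows the same strategy as the paper's proof: invoke the standard cone-by-cone Cartier criterion from toric geometry, identify the maximal cones of $\F_Q$ with facet arrow-labelings via \cref{cor:terminalGorenstein}, and reduce the existence of a supporting linear functional on each maximal cone to the path relations from \cref{l:Qrelations}. One place where you are actually \emph{more} careful than the paper: the paper's proof states the criterion as "the function $f_\sigma(u_a)=c_a$ extends to a linear map on $\mathbf N_\R$," which, taken literally, is only the $\Q$-Cartier condition — the correct Cartier criterion requires the supporting functional to lie in the lattice $\mathbf M_Q$, not merely in $\mathbf M_{Q,\R}$ (a real extension need not be integral on full-dimensional cones, e.g.\ for the $A_1$ singularity). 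You close this gap explicitly by choosing $n$ linearly independent rays in $\sigma_M$ and using the total unimodularity of the vertex coordinate matrix from \cref{r:TU} to conclude that they form a $\Z$-basis of the lattice, forcing the functional to be integral. Since the TU property is indeed what makes the paper's weaker-looking criterion suffice in this setting, your version supplies a step the paper leaves implicit.
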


\begin{proof} Let $\mathbf N_\R$ denote the vector space containing $\NP(Q)$.
Recall that a Weil divisor $\sum c_a D_a$ is Cartier if and only if for each maximal cone $\sigma$ of the fan $\mathcal F_Q$,
	the function $f_\sigma$ on primitive generators $u_a$ 
	of rays $\rho_a$ in $\sigma$ given by 
	$f_\sigma(u_a)=c_a$, 
	extends to a linear map on 
	$\mathbf N_\R$.
	Recall that we have identified the maximal cones in terms of facet arrow-labelings in 
\cref{cor:terminalGorenstein}.
	 Let $M$ be a facet arrow-labeling and let 
	 $\{u_a\mid M(a)=-1\}$ be the set of primitive vectors corresponding to rays of the associated maximal cone. Since $\NP(Q)$ is full-dimensional the maximal cone $\sigma$ is also full-dimensional and we have that $\{u_a\mid M(a)=-1\}$ spans $\mathbf N_\R$. Recalling that $
S:=\{a\in\Arr(Q)\mid M(a)=-1\}$, 
let $K$ be the kernel of the linear map
	 $\R^{S}\to \mathbf N_\R$ which sends the standard basis element $e_a\in \R^{S}$ to 
	 $u_a$. 
	 Then  $\mathbf N_\R$ is identified with the quotient $\R^{S}/K$,
	and the Weil divisor $\sum c_a D_a$ is Cartier if and only if the linear map $\R^{S}\to\R$ that takes  $e_a$ to $c_a$ is well-defined on this quotient. By \cref{l:Qrelations} applied to $S$ this is the case precisely if the condition \eqref{eq:CartierQ} holds for all  $\pi\in \Pi(S)$.     
\end{proof}

\begin{remark}\label{r:principaldiv} Recall the definition of $\mathbf N_Q$ and $\mathbf N_{Q,\R}$ and their duals $\mathbf M_Q$ and $\mathbf M_{Q,\R}$, see \cref{def:NNP} and \cref{def:proper}. By \cref{prop:NNP}, we may replace $\NP(Q)$ with the integrally equivalent $\NNP(Q)$ and consider $\F_Q$ as lying in $\mathbf N_{Q,\R}$. Thus $\mathbf M_Q$ has an interpretation as the character lattice of the torus acting on $Y(\F_Q)$.  Moreover, we may identify $\Z^{\Arr(Q)}$ with the group of torus invariant Weil divisors of $Y(\F_Q)$. The map $\mathbf M_Q\to \Z^{\Arr(Q)}$ sending a $0$-sum arrow labeling $M$ to the divisor $\sum_{a\in\Arr(Q)}M(a)D_a$ naturally identifies the group of $0$-sum arrow labelings $\mathbf M_Q$ with the group of torus-invariant principal divisors (the divisors of zeros and poles associated to characters). The class group  $\operatorname{Cl}(Y(\F_Q))$ of the toric variety $Y(\F_Q)$ is therefore given by $\Z^{\Arr(Q)}/\mathbf M_Q$, see \cite[Chapter 4]{CLS}. The Picard group is the subgroup of $\operatorname{Cl}(Y(\F_Q))$ generated by the Cartier divisor classes. In this way  \cref{prop:Cartier} allows for computation of the Picard group of the toric variety $Y(\F_Q)$. 
\end{remark}
\begin{example}\label{ex:Galkin} Consider the strongly connected starred quiver $Q$ shown in Figure~\ref{f:Galkinex}.
\begin{figure}
\[\begin{tikzcd}
	\star & \bullet & \bullet & \bullet & \star && \star & \bullet & \bullet & \bullet & \star \\
	\\
	\star & \bullet & \bullet & \bullet & \star && \star & \bullet & \bullet & \bullet & \star
	\arrow["{a_0}"{description}, from=1-1, to=1-2]
	\arrow["{a_1}"{description}, curve={height=-12pt}, from=1-2, to=1-3]
	\arrow["{a_2}"{description}, curve={height=-12pt}, from=1-3, to=1-2]
	\arrow["{a_3}"{description}, curve={height=-12pt}, from=1-3, to=1-4]
	\arrow["{a_4}"{description}, curve={height=-12pt}, from=1-4, to=1-3]
	\arrow["{a_5}"{description}, curve={height=-12pt}, from=1-4, to=1-5]
	\arrow["{a_6}"{description}, curve={height=-12pt}, from=1-5, to=1-4]
	\arrow["{-1}", Rightarrow, from=1-7, to=1-8]
	\arrow["1", curve={height=-12pt}, from=1-8, to=1-9]
	\arrow["{-1}", curve={height=-12pt}, Rightarrow, from=1-9, to=1-8]
	\arrow["{-1}", curve={height=-12pt}, Rightarrow, from=1-9, to=1-10]
	\arrow["1", curve={height=-12pt}, from=1-10, to=1-9]
	\arrow["1", curve={height=-12pt}, from=1-10, to=1-11]
	\arrow["{-1}", curve={height=-12pt}, Rightarrow, from=1-11, to=1-10]
	\arrow["{-1}", Rightarrow, from=3-1, to=3-2]
	\arrow["{-1}", curve={height=-12pt}, Rightarrow, from=3-2, to=3-3]
	\arrow["1", curve={height=-12pt}, from=3-3, to=3-2]
	\arrow["1", curve={height=-12pt}, from=3-3, to=3-4]
	\arrow["{-1}", curve={height=-12pt}, Rightarrow, from=3-4, to=3-3]
	\arrow["1", curve={height=-12pt}, from=3-4, to=3-5]
	\arrow["{-1}", curve={height=-12pt}, Rightarrow, from=3-5, to=3-4]
	\arrow["{-1}", Rightarrow, from=3-7, to=3-8]
	\arrow["1", curve={height=-12pt}, from=3-8, to=3-9]
	\arrow["{-1}", curve={height=-12pt}, Rightarrow, from=3-9, to=3-8]
	\arrow["1", curve={height=-12pt}, from=3-9, to=3-10]
	\arrow["{-1}", curve={height=-12pt}, Rightarrow, from=3-10, to=3-9]
	\arrow["{-1}", curve={height=-12pt}, Rightarrow, from=3-10, to=3-11]
	\arrow["1", curve={height=-12pt}, from=3-11, to=3-10]
\end{tikzcd}\]
\caption{A starred quiver $Q$ and three of its facet arrow-labelings\label{f:Galkinex}}
\end{figure}
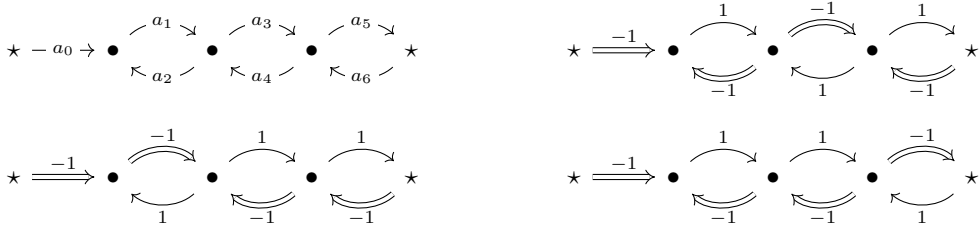
The associated toric variety $Y(\F_Q)$ is a terminal Fano $3$-fold with three singular points, namely corresponding to the maximal cones given by the facet labelings shown in \cref{f:Galkinex}. 
Recall that an element $(c_a)\in\Z^{\Arr(Q)}$ represents a Weil divisor $\sum c_aD_a$ of $Y(\mathcal F_Q)$, and the lattice of Weil divisors linearly equivalent to $0$ is given by $\mathbf M_Q\subset \Z^{\Arr(Q)}$, as explained in \cref{r:principaldiv}. 
By \cref{prop:Cartier}, the toric Cartier divisors are the divisors $\sum_{i=0}^6 c_i D_{a_i}$ whose coefficients satisfy
\[
c_0+c_1=c_4+c_6,\quad c_0+c_3=c_2+c_6,\quad c_0+c_5=c_2+c_4.
\] 
	Modulo linear equivalence
	we obtain the Picard group of $Y(\F_Q)$, which turns to be rank $1$ with generator 
\[
[D_{a_1}+D_{a_3}+D_{a_5}-D_{a_0}]=[D_{a_2}+D_{a_4}+D_{a_6}+2D_{a_0}].
\]
We see directly that the toric boundary divisor $\sum_{i=0}^6 D_{a_i}$ is Cartier and moreover $-K_{Y(\F_Q)}=[\sum_{i=0}^6 D_{a_i}]=2  [D_{a_1}+D_{a_3}+D_{a_5}-D_{a_0}]$, showing that $Y(\F_Q)$ has Fano index $2$. Additionally, $Y(\F_Q)$ has a small resolution $Y(\widehat\F_Q)$ by \cref{p:desing}, and a  Laurent polynomial superpotential which is read off the quiver $Q$ with coordinates $x_1,x_2,x_3$ corresponding to the $\bullet$-vertices $\vv_1,\vv_2,\vv_3$  in order:
\[
S_Q(x_1,x_2,x_3)=x_1+\frac{x_1}{x_2}+\frac{x_2}{x_3}+x_3+\frac{x_2}{x_1}+\frac{x_3}{x_2}+\frac{1}{x_3}.
\]
Let us change coordinates via $x_1=x y z , x_2= y z, x_3=z$. 
	Then $S_Q$ agrees precisely with the Laurent polynomial 
	$$xyz+x+y+z+\frac{1}{x}+\frac{1}{y} + \frac{1}{z}$$
	discussed in \cite[Example 5.7]{Galkin}. The constant term series of $S_Q$ (the series in $t$ made up of constant terms of $e^{t S_Q(x)}$) has a geometric interpretation in terms of different types of I-series associated to smooth anticanonical sections in  $Y(\widehat \F_Q)$ and in a smoothing of $Y(\F_Q)$, see \cite{Galkin}. 
\end{example}

\subsubsection{A canonical extension of a ranked poset}

Suppose $P$ is a ranked poset. Recall the bounded extension  $\hat P=P\cup\{\hat 0,\hat 1\}$ of $P$. If $P$ is only ranked but not graded then $\hat P$ is no longer ranked. 
We may instead consider the naive ranked extension, 
\begin{equation}\label{eq:naive}
P_{\max}=P\cup\left\{\hat 0\right\}\cup\left\{\hat 1_m\mid m\in P \text{ maximal}\right\},
\end{equation}
that adds one element $\hat 1_m$ above every maximal element $m$, 
together
with the cover relation $\hat 1_m \gtrdot m$.
From the perspective of toric geometry, however, there turns out to be a natural extension of $P$ 
that is intermediate between the bounded extension and the
 extension $P_{\max}$. 

\begin{defn}\label{d:canonical-extension}   
We define the \emph{canonical extension} $\bar P$ of a ranked poset 
$P$ as follows. Consider the starred quiver $Q_{P_{\max}}$ associated to the ranked poset $P_{\max}$. We call two starred vertices equivalent if there exists a facet arrow-labeling of $Q_{P_{\max}}$ for which they are in the same facet component, see \cref{f:canonical}. We define $\bar P$ to be the quotient poset of $P_{\max}$ obtained by identifying $\hat 1_m$ with $\hat 1_{m'}$ whenever the associated starred vertices are equivalent. \end{defn}

Let $P$ be a ranked poset, and $\bar P$ its extension from \cref{d:canonical-extension}.   Let $Q=Q_{\bar P}=(\V,\Arr(Q))$ be the starred quiver associated to $\bar P$. So $Q_{\bar P}$ has normal vertices $\V_{\bullet}=P$ and starred vertices $\V_\star=\{\star_{0},\star_1,\dotsc, \star_r\}$ associated to the elements of $\bar P\setminus P$. We assume $\star_0$ is the source vertex so that $\star_1,\dotsc, \star_r$ are the sink vertices. 

\begin{lemma}\label{l:canonicalP}
The poset $\bar P$ is ranked. Moreover, every facet-arrow labeling of $Q_{\bar P}$ has precisely $r+1$ facet components, one containing each starred vertex.  
\end{lemma}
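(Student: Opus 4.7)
The core technical tool is a rank-label identity for facet components in the starred quiver $Q_{P'}$ of any ranked poset $P'$. For any facet arrow-labeling $M$ with corresponding $\bullet$-labeling $L$ (extended by $L(\star)=0$ at every starred vertex), any two vertices $u,v$ lying in a common facet component $C$ satisfy $L(u)+\rank(u)=L(v)+\rank(v)$. Indeed, along any undirected path from $u$ to $v$ inside $C$ every arrow carries label $-1$; the signed sum of arrow labels along the path equals $L(v)-L(u)$ by \cref{def:coords}, and since $P'$ is ranked each forward (respectively backward) arrow contributes $+1$ (respectively $-1$) to the rank change, so the signed arrow count equals $\rank(v)-\rank(u)$. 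Equating gives the identity.

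The first assertion, that $\bar P$ is ranked, then follows in two steps. First, $P_{\max}$ is already ranked: each $\hat 1_m$ covers only $m$, forcing $\rank_{P_{\max}}(\hat 1_m)=\rank_P(m)+1$. Applying the identity to $Q_{P_{\max}}$: whenever $\hat 1_m$ and $\hat 1_{m'}$ lie in a common facet component of some facet arrow-labeling, $\rank_P(m)=\rank_P(m')$. Taking the transitive closure of the equivalence relation defining $\bar P$, all representatives of a single class have a common $P$-rank, so $\bar P$ acquires a well-defined rank function and is therefore ranked. A second application of the rank-label identity, now to the ranked poset $\bar P$, immediately rules out $\star_0$ sharing a facet component with any $\star_i$ ($i\ge 1$), since $\rank(\star_0)=0<\rank(\star_i)$.

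It remains to show that no two distinct $\star_i,\star_j$ with $i,j\ge 1$ lie in a common facet component of any facet arrow-labeling $\bar M$ of $Q_{\bar P}$; combined with \cref{lem:containstar}, this yields exactly $r+1$ components. The quivers $Q_{\bar P}$ and $Q_{P_{\max}}$ have the same arrows and the same $0$-sum conditions, since oriented paths between starred vertices correspond bijectively under the identification of $\hat 1_m$'s into $\star_\ell$'s; they therefore share the same facet arrow-labelings as functions, differing only in how the connected components of the $(-1)$-labeled subquiver are packaged. Explicitly, each facet component of $\bar M$ in $Q_{\bar P}$ is a merger of one or more facet components of $\bar M$ viewed in $Q_{P_{\max}}$, joined by shared $\hat 1$'s lying in a common $\bar P$-class. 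Hence if $\star_i,\star_j$ shared a facet component of $\bar M$ in $Q_{\bar P}$, there would be a chain of facet components $D_0,D_1,\ldots,D_k$ of $\bar M$ in $Q_{P_{\max}}$ with $D_0$ containing some $\hat 1_m$ (where $m$ lies in the class of $\star_i$), $D_k$ containing some $\hat 1_{m'}$ (where $m'$ lies in the class of $\star_j$), and every consecutive pair $D_l,D_{l+1}$ containing $\hat 1$'s that lie in a common class. Each link in this chain is either an instance of the primitive relation generating $\bar P$ (two $\hat 1$'s sharing a facet component of the single labeling $\bar M$ viewed in $Q_{P_{\max}}$) or an identification already recorded in $\bar P$, so the transitive closure forces $m$ and $m'$ into a single $\bar P$-class and hence $\star_i=\star_j$, contradicting $i\neq j$. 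The delicate part of the argument is precisely this correspondence between facet components across the two quivers: an undirected path in $Q_{\bar P}$ through an intermediate starred vertex $\star_\ell$ lifts to a broken walk in $Q_{P_{\max}}$ that jumps between distinct representatives of $\star_\ell$'s class, and organising these jumps into the $D_l$-chain while invoking the transitive closure of the equivalence relation is the main obstacle.
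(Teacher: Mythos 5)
Your proof is correct and follows essentially the same route as the paper: establish a rank-label identity on facet components, use it on $Q_{P_{\max}}$ to show identified $\hat 1$'s share a rank (hence $\bar P$ is ranked), separate $\star_0$ via the same identity, and transfer facet arrow-labelings between $Q_{\bar P}$ and $Q_{P_{\max}}$ to separate the sink stars, concluding via \cref{lem:containstar}. Your treatment of the transfer step is more explicit and more careful than the paper's: the paper asserts directly that two starred vertices of $Q_{\bar P}$ sharing a facet component under $\bar M$ have representatives lying in a common facet component of $Q_{P_{\max}}$, a claim which, read literally, only handles connecting $(-1)$-paths with no intermediate starred vertex (since an intermediate $\star_\ell$ may lift to a jump between distinct representatives of its class, breaking the walk in $Q_{P_{\max}}$). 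Your decomposition into facet-component segments $D_0,\dotsc,D_k$ joined at representatives of a common $\bar P$-class, together with the appeal to the transitive closure defining $\bar P$, fills this gap cleanly; the paper's terser phrasing is best read as leaving exactly that chain argument implicit.
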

\begin{proof}
Recall first that the poset $P_{\max}$ is a ranked poset. Fix a facet arrow-labeling $M$ of $Q_{P_{\max }}$. The facet component of $\star_0$ contains no other starred vertex, as  follows by the proof of \cref{lem:orderideal}. Now consider a facet component which contains two starred vertices $\star_i$ and $\star_j$. Then consider a non-oriented path in the facet component that connects the two starred vertices. We can work out the associated vertex labeling $L_M$ on every vertex of the facet component starting with $L_M(\star_i)=0$ using that $L_M$ increases by $1$ as we reverse along an arrow in the facet component, and decreases by $1$ as we follow an arrow upwards. In terms of the rank function, this implies that $L_M(\vv)=\rank(\hat 1_i)-\rank(\vv)$ for any $\vv\in\bar P$ lying in the facet component of $\star_i$. Since $\star_j$ is assumed to lie in the same facet component as $\star_i$, and $L_M(\star_j)=0$ as it is a starred vertex, we deduce that $\rank(\hat 1_j)=\rank(\hat 1_i)$. 

We now have that $\bar P$ is ranked, since the equivalence relation identifies only starred vertices that have the same rank.

Next we fix a facet arrow-labeling $\bar M$ of $Q_{\bar P}$. We have a natural bijection between the arrows of $Q_{\bar P}$ and those of $Q_{P_{\max}}$, and the labeling $\bar M$ transfers to a facet arrow-labeling $M$ for $Q_{P_{\max}}$. If two starred vertices of $Q_{\bar P}$ are in the same facet component for the facet arrow labeling $\bar M$ then they have representatives in $P_{\max}$ that are in the same component for $M$. Therefore actually the two starred vertices are identified in $\bar P$ by construction. It follows that the map from starred vertices of $Q_{\bar P}$ to facet components for $\bar M$ is injective. But this map is also surjective by \cref{lem:minus1}.
\end{proof}

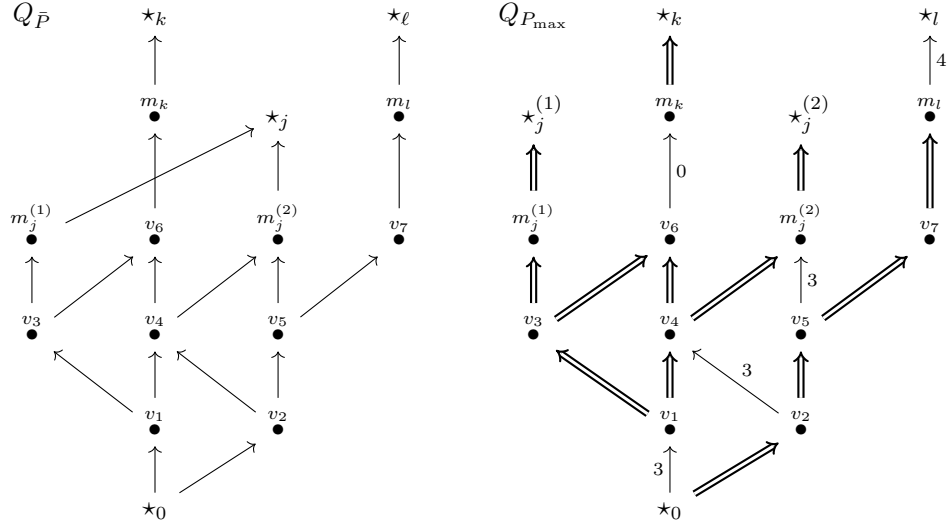
\begin{figure}[h]
\[\begin{tikzcd}
	{Q_{\bar P}} & {\star_k} && {\star_\ell} & {Q_{P_{\max }}} & {\star_k} && {\star_l} \\
	& {\overset{m_k}\bullet} & {\star_j} & {\overset{m_l}\bullet} & {\ \ \star_j^{(1)}} & {\overset{m_k}\bullet} & {\ \ \star_j^{(2)}} & {\overset{m_l}\bullet} \\
	{\overset{m_j^{(1)}}\bullet} & {\overset{v_6}\bullet} & {\overset{m_j^{(2)}}\bullet} & {\overset{v_7}\bullet} & {\overset{m_j^{(1)}}\bullet} & {\overset{v_{6}}\bullet} & {\overset{m_j^{(2)}}\bullet} & {\overset{v_7}\bullet} \\
	{\overset{v_3}\bullet} & {\overset{v_4}\bullet} & {\overset{v_5}\bullet} && {\overset{v_{3}}\bullet} & {\overset{v_{4}}\bullet} & {\overset{v_{5}}\bullet} \\
	& {\overset{v_1}\bullet} & {\overset{v_2}\bullet} &&& {\overset{v_{1}}\bullet} & {\overset{v_{2}}\bullet} \\
	& {\star_0} &&&& {\star_0}
	\arrow[from=2-2, to=1-2]
	\arrow[from=2-4, to=1-4]
	\arrow[Rightarrow, thick, from=2-6, to=1-6]
	\arrow["4"', from=2-8, to=1-8]
	\arrow[from=3-1, to=2-3]
	\arrow[from=3-2, to=2-2]
	\arrow[from=3-3, to=2-3]
	\arrow[from=3-4, to=2-4]
	\arrow[Rightarrow, thick, from=3-5, to=2-5]
	\arrow["0"', from=3-6, to=2-6]
	\arrow[Rightarrow, thick, from=3-7, to=2-7]
	\arrow[Rightarrow, thick, from=3-8, to=2-8]
	\arrow[from=4-1, to=3-1]
	\arrow[from=4-1, to=3-2]
	\arrow[from=4-2, to=3-2]
	\arrow[from=4-2, to=3-3]
	\arrow[from=4-3, to=3-3]
	\arrow[from=4-3, to=3-4]
	\arrow[Rightarrow, thick, from=4-5, to=3-5]
	\arrow[Rightarrow, thick, from=4-5, to=3-6]
	\arrow[Rightarrow, thick, from=4-6, to=3-6]
	\arrow[Rightarrow, thick, from=4-6, to=3-7]
	\arrow["3"', from=4-7, to=3-7]
	\arrow[Rightarrow, thick, from=4-7, to=3-8]
	\arrow[from=5-2, to=4-1]
	\arrow[from=5-2, to=4-2]
	\arrow[from=5-3, to=4-2]
	\arrow[from=5-3, to=4-3]
	\arrow[Rightarrow, thick, from=5-6, to=4-5]
	\arrow[Rightarrow, thick, from=5-6, to=4-6]
	\arrow["3"', from=5-7, to=4-6]
	\arrow[Rightarrow, thick, from=5-7, to=4-7]
	\arrow[from=6-2, to=5-2]
	\arrow[from=6-2, to=5-3]
	\arrow["3", from=6-6, to=5-6]
	\arrow[Rightarrow, thick, from=6-6, to=5-7]
\end{tikzcd}\]
	\caption{To the left, we have a quiver $Q_{\bar{P}}$ illustrating the canonical 
	extension of a ranked poset $P$.  To the right, the quiver corresponding to 
	$P_{\max}$ together with a facet arrow-labeling  in bold, 
	which connects the 
two vertices that are equivalent.  
	The notations are as in the proof of \cref{t:canonicalPCartier}. 
	Note that $Q=Q_{\bar P}$ and $Q_{P_{\max}}$ have the same root polytopes and associated face fan $\F_Q$\label{f:canonical}}
\end{figure}

\begin{remark}\label{r:HibiVsY}
Note that while the quiver $Q=Q_{\bar P}$ depends on the chosen extension $\bar P$ of the poset $P$, the root polytope does not, 
\[\NP(Q_{\bar P})=\NP(Q_{P_{\max}})=\NP(Q_{\hat P}),
\] 
so that the toric variety $Y(\F_Q)$ associated to its face fan really just depends on the poset $P$. The relation between our variety $Y(\F_Q)$ and the other toric variety naturally associated to the poset $P$, namely the Hibi toric variety $Y_{\OO(P)}$, is that $Y(\F_Q)$ is a small partial desingularisation of $Y_{\OO(P)}$ whenever $P$ is ranked, and if $P$ is graded the two varieties are isomorphic, see \cref{s:hibiintro}. 
\end{remark}

We will now describe the group of torus-invariant Cartier divisors and the Picard group for  $Y(\F_Q)$. 
Recall that an element $(c_a)\in\Z^{\Arr(Q)}$ represents a Weil divisor $\sum c_aD_a$ of $Y(\mathcal F_Q)$, and the lattice of Weil divisors linearly equivalent to $0$ is given by $\mathbf M_Q\subset \Z^{\Arr(Q)}$, the sublattice of integer $0$-sum arrow-labelings of $Q$ from \cref{def:proper}. 
We make the following definition. 

\begin{definition}\label{def:properCartier}
Suppose $Q$ is a strongly-connected starred quiver.  Let $M: \Arr(Q)\to \R$ be an arrow labeling of $Q$. We call $M$ an \emph{independent-sum arrow labeling} if for any oriented path from one star vertex to another, the sum $\sum_{a\in\pi} M(a)$ depends only on the endpoints.  We write $\mathbf C_Q$ for the lattice of $\Z$-valued independent-sum arrow labelings. We may consider $\mathbf C_Q$ as a sublattice of  $\Z^{\Arr(Q)}$. Note that $0$-sum arrow labelings are examples of independent-sum arrow labelings by  \cref{lem:bullet-0sum}, so that we have $\mathbf M_Q\subseteq\mathbf C_Q\subseteq \Z^{\Arr(Q)}$.
\end{definition}

The following lemma about independent-sum arrow labelings is a generalisation of  \cref{lem:bullet-0sum}.
 
\begin{lem}\label{lem:Vindependent-sum} Given a strongly-connected starred quiver $Q$ with vertices $\V=\V_\bullet\sqcup\V_\star$ and arrows $\Arr(Q)$, the lattice of independent-sum arrow labelings $\mathbf C_Q$ is the image of the map
\begin{equation}\label{e:CQaltdef}
\begin{array}{ccc}
\Z^{\V}&\to &\Z^{\Arr(Q)}\\
(\ell_\vv)_\vv&\mapsto& (c_a)_{a\in\Arr(Q)},
\end{array}
\end{equation}
where $c_a=\ell_{h(a)}-\ell_{t(a)}$ for $a\in\Arr(Q)$. Here $h(a)$ denotes the head of the arrow $a$ and $t(a)$ the tail.
\end{lem}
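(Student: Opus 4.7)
The plan is to prove both inclusions between the image of the map in \eqref{e:CQaltdef} and $\mathbf C_Q$.

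The inclusion image $\subseteq \mathbf C_Q$ is immediate by telescoping: if $(\ell_\vv)_\vv \in \Z^{\V}$ gets sent to $(c_a)_a$ with $c_a = \ell_{h(a)} - \ell_{t(a)}$, then for any oriented path $\pi = (a_1,\dots,a_k)$ from $\star_i$ to $\star_j$ the sum $\sum_{a \in \pi} c_a$ telescopes to $\ell_{\star_j} - \ell_{\star_i}$, which depends only on the endpoints.

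For the reverse inclusion, I would proceed by a potential-shift reduction to the $0$-sum case already handled by \cref{lem:bullet-0sum}. Given $M \in \mathbf C_Q$, the independent-sum condition lets me define a partial function $\sigma$ on $\V_\star \times \V_\star$ by $\sigma(\star_i, \star_j) := \sum_{a \in \pi} M(a)$ for any oriented path $\pi$ in $Q$ from $\star_i$ to $\star_j$. Using the strong-connectedness of $\Qbar$, I would check that $\sigma$ extends consistently (by concatenating oriented paths) to all ordered pairs of starred vertices and satisfies the cocycle identity $\sigma(\star_i, \star_k) + \sigma(\star_k, \star_j) = \sigma(\star_i, \star_j)$. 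This yields a potential $\mu: \V_\star \to \Z$, unique up to an additive constant, with $\sigma(\star_i, \star_j) = \mu_j - \mu_i$.

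Setting $\mu_\vv := 0$ for $\vv \in \V_\bullet$, I would define the shifted arrow labeling $M'(a) := M(a) - \mu_{h(a)} + \mu_{t(a)}$. For any oriented path $\pi$ from $\star_i$ to $\star_j$ in $Q$, the $M'$-sum telescopes to $\sigma(\star_i,\star_j) - (\mu_j - \mu_i) = 0$, so $M'$ is a $0$-sum arrow labeling. By \cref{lem:bullet-0sum}, $M' = M_L$ for some $\bullet$-labeling $L: \V_\bullet \to \Z$. Extending $L$ to $\V$ by $L(\star_i) := 0$, the case analysis of \cref{def:coords} gives $M_L(a) = L(h(a)) - L(t(a))$ for every arrow $a \in \Arr(Q)$. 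Finally, $\ell_\vv := L(\vv) + \mu_\vv$ satisfies $\ell_{h(a)} - \ell_{t(a)} = M'(a) + \mu_{h(a)} - \mu_{t(a)} = M(a)$ for each arrow $a$, so $(\ell_\vv) \in \Z^\V$ is the required preimage under the map in \eqref{e:CQaltdef}.

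The main obstacle I anticipate is the construction of the potential $\mu$: the function $\sigma$ is a priori defined only on those pairs of starred vertices connected by an oriented path in $Q$, so extending it consistently to all of $\V_\star \times \V_\star$ requires carefully chaining oriented paths (possibly passing through intermediate starred vertices) using strong-connectedness of $\Qbar$, and verifying the cocycle identity at each step. Once $\mu$ is in hand, the remainder of the argument is a direct unpacking of definitions combined with \cref{lem:bullet-0sum}.
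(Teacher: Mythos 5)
Your proof is correct. The paper's own ``proof'' is a single line stating that the argument is completely analogous to that of \cref{lem:bullet-0sum}, so it implicitly uses a direct path-integration construction (define $\ell_\vv$ for $\bullet$-vertices by summing $M$ along an oriented path from a starred vertex, then extend to $\V_\star$) rather than your two-step reduction. The two routes differ in organization but not in substance. Your approach has the advantage of using \cref{lem:bullet-0sum} as a genuine black box: once you have the shift $M' = M - \mu\circ h + \mu\circ t$, the conclusion $M'=M_L$ is immediate, and $\ell := L + \mu$ (with $L\equiv 0$ on $\V_\star$) telescopes correctly. The ``analogous'' direct argument must instead rerun the well-definedness check from \cref{lem:bullet-0sum} with the added wrinkle that the value of a path integral now depends on which starred vertex the path originates from, so one must coordinate the values $\ell_{\star_j}$ across all starred vertices anyway --- essentially rediscovering your potential $\mu$.

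On the one step you flag as your main obstacle: you do not actually need to extend $\sigma$ to all of $\V_\star\times\V_\star$. You only need a function $\mu:\V_\star\to\Z$ satisfying $\sigma(\star_i,\star_j)=\mu_{\star_j}-\mu_{\star_i}$ whenever $\sigma(\star_i,\star_j)$ is defined, i.e.\ whenever an oriented path in $Q$ from $\star_i$ to $\star_j$ exists. On the domain of definition of $\sigma$, the cocycle identity is automatic from the independent-sum hypothesis (concatenating two oriented paths gives an oriented path, and going around an oriented cycle twice shows cycles at a starred vertex have sum $0$). Hence $\sigma$ is conservative and admits a potential on each weakly connected component of the reachability relation on $\V_\star$; choosing base points component by component gives $\mu$, and the shifted labeling $M'$ is then $0$-sum by the telescoping computation, regardless of whether that reachability graph is connected. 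This closes the gap you identified.
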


\begin{proof} It is immediate that the image of \eqref{e:CQaltdef} consists of independent-sum arrow labeling. Therefore it remains to show the reverse inclusion. 

Suppose $M$ is an independent-sum arrow labeling. Note that for an oriented path $\pi$ from a starred vertex to itself, the sum $\sum_{a\in\pi}M(a)$ must equal to $0$. Therefore, if there is only a single starred vertex, then any independent-sum arrow labeling is actually a $0$-sum arrow labeling, and the lemma follows from \cref{lem:bullet-0sum}. We can now proceed by induction on the number of starred vertices. Suppose we have $d+1$ starred vertices and the lemma holds whenever there are $d$ starred vertices. Amongst all of the paths in the underlying graph of $Q$ that start at one starred vertex and end at a different starred vertex, and traverse only normal vertices, pick a path $\pi$ that has a \textit{minimal} number of direction changes. We show indirectly that $\pi$ must be an oriented path in $Q$. Namely, consider the subquiver $Q'$ just consisting of the arrows of $\pi$. It has two starred vertices $\star_1\ne \star_2$. If there is a direction change along $\pi$ at a vertex $\vv$, then this vertex is a sink or a source for $Q'$. Say it is a sink. Since $Q$ is strongly connected then there is an oriented path in $Q$ from $\vv$ to a starred vertex $\star_3$. If we replace the part of $\pi$ running from $\vv$ to $\star_2$ by the new path to $\star_3$ then we have reduced the number of direction changes. If $\star_3$ happens to equal to $\star_1$, then we  replace instead the part of $\pi$ that runs from $\star_1$ to $\vv$  by the new path from $\star_1=\star_3$ to $\vv$, again reducing the number of direction changes. We argue analogously if $\vv$ is a source. Therefore we have proved that the path between distinct starred vertices with minimal number of direction changes is actually an \textit{oriented} path; it has no direction changes. 

By the argument above, there exist two starred vertices $\star_1\ne\star_2$ that are connected by an oriented path. Let $\pi$ be such an oriented path from $\star_1$ to $\star_2$. Since $M$ is an independent-sum arrow labeling, the sum $m(\star_1,\star_2):=\sum_{a\in\pi} M(a)$ is well-defined (independently of the choice of $\pi$).

We now construct a new quiver $\bar Q$ as follows. We add to $Q$ an additional arrow, namely from $\star_2$ to $\star_1$, and then we turn $\star_2$ into a normal vertex. This new quiver is still strongly connected, although it has one fewer starred vertices. We extend the arrow-labeling $M$ to a labeling $\bar M$ for $\bar Q$ by labeling the new arrow with $-m(\star_1,\star_2)$. This is now an independent-sum arrow labeling for $\bar Q$, as is straightforward to check. We can apply the induction hypothesis to obtain a vertex labeling for $\bar Q$ that maps to the arrow labeling $\bar M$. This  vertex labeling recovers all of the arrow labels of our original quiver $Q$ and shows that $M$ is in the image of the map \eqref{e:CQaltdef}. 
\end{proof}
\begin{theorem}\label{t:canonicalPCartier} 
	Let $P$ be a ranked poset, $\bar P$ its canonical extension and  let $Q=Q_{\bar P}$ be the associated starred quiver, where $Q=(\V,\Arr(Q))$ with $\V=\V_{\bullet}\cup\V_\star$.   Then the lattice $\mathbf C_Q$ of independent-sum arrow labelings agrees with the group of torus-invariant Cartier divisors of the toric variety $Y(\mathcal F_Q)$ associated to $P$. Moreover, we have a well-defined map $\mathbf C_Q\rightarrow  \Z^{\V_{\star}\setminus\{\star_0\}}$ defined by 
\begin{equation}\label{e:PicMap}\begin{tikzcd}
	M & {(\sum_{a\in\pi_i} M(a))_{\star_i\in \V_{\star}\setminus\{\star_0\}}}
	\arrow[maps to, from=1-1, to=1-2]
\end{tikzcd}\end{equation}
where $\pi_i$ is an/any oriented path from $\star_0$ to $\star_i$. The sequence
\begin{equation}\label{e:Picexactseq}
0\longrightarrow\mathbf M_Q\longrightarrow \mathbf C_Q\longrightarrow \Z^{\V_\star\setminus\{\star_0\}}\longrightarrow 0
\end{equation}
is exact and identifies the Picard group $\Pic(Y(\mathcal F_Q))$ with $\Z^{\V_\star\setminus\{\star_0\}}$. 
\end{theorem}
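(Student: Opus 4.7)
The plan is to identify $\mathbf{C}_Q$ with the group of torus-invariant Cartier divisors of $Y(\F_Q)$ and then verify the displayed exact sequence, from which the Picard group identification follows immediately.

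First I will prove that $\mathbf{C}_Q$ is contained in the group of torus-invariant Cartier divisors. Given $M \in \mathbf{C}_Q$, \cref{lem:Vindependent-sum} provides $\ell : \V \to \Z$ with $M(a) = \ell(h(a)) - \ell(t(a))$. By \cref{prop:Cartier} it suffices to check $\sum_{a \in \pi} \varepsilon(a) M(a) = 0$ for every facet arrow-labeling $M'$ and every $\pi \in \Pi(F(M'))$. By \cref{l:canonicalP} the facet components $C_0, \ldots, C_r$ of $M'$ partition $\V$ with one starred vertex in each, giving $F(M') = \bigsqcup_i \Arr(C_i)$. A closed walk in $\overline Q$ with support in $F(M')$ decomposes as a concatenation of sub-walks each lying in a single $C_i$; since $C_i$ contains only the starred vertex $\star_i$, each such sub-walk is closed already in $Q$, and the telescoping sum of $\ell(h(a)) - \ell(t(a))$ on it vanishes.

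Second I will prove the reverse containment. Given a Cartier divisor $(c_a)$, I construct $\ell : \V \to \Z$ with $c_a = \ell(h(a)) - \ell(t(a))$, which places $(c_a)$ in $\mathbf{C}_Q$ via \cref{lem:Vindependent-sum}. Set $\ell(\star_0) = 0$ and extend by $\ell(w) = \sum_{a \in p} \varepsilon(a) c_a$ for any chosen walk $p$ in the underlying graph of $Q$ from $\star_0$ to $w$; well-definedness reduces to showing $\sum_{a \in \pi} \varepsilon(a) c_a = 0$ for every simple cycle $\pi$ in $Q$. The main obstacle is proving that the cycle space of $Q$ is generated over $\Z$ by cycles supported in single facet components of facet arrow-labelings of $Q_{\bar P}$; the canonical extension $\bar P$ is defined by its equivalence relation on maximal elements precisely so that enough supportable cycles exist. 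I will handle the elementary supportable cycles (diamonds among normal vertices, around $\star_0$, or around $\star_i$'s whose starred vertex has multiple incoming arrows) by constructing explicit $\bullet$-labelings $L$ with $M_L(a) = -1$ along the cycle and extending maximally to a facet arrow-labeling. Longer cycles, including those passing through both $\star_0$ and some $\star_i$, are decomposed as $\Z$-linear combinations of elementary ones, and applying \cref{prop:Cartier} to each elementary supporting $M$ yields the required vanishing by linearity.

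Finally I will verify exactness of the sequence and conclude. Well-definedness of the map $\mathbf{C}_Q \to \Z^{\V_\star \setminus \{\star_0\}}$ is immediate from the independent-sum condition (\cref{def:properCartier}), and oriented paths from $\star_0$ to each $\star_i$ exist by strong connectedness of $\overline Q$. Surjectivity: for each $i > 0$ the function $\ell : \V \to \Z$ with $\ell(\star_i) = 1$ and $\ell(v) = 0$ otherwise yields an element of $\mathbf{C}_Q$ mapping to the $i$-th standard basis vector. Exactness at $\mathbf{C}_Q$: if $M = M_L \in \mathbf{M}_Q$ then $L(\star_j) = 0$ for all $j$, so path sums from $\star_0$ to any $\star_i$ vanish; conversely if $M \in \mathbf{C}_Q$ has vanishing path sums, the $\ell$ from \cref{lem:Vindependent-sum} normalized by $\ell(\star_0) = 0$ is forced to satisfy $\ell(\star_i) = 0$ for all $i$, so $M \in \mathbf{M}_Q$ by \cref{lem:bullet-0sum}. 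Combining these gives $\Pic(Y(\F_Q)) = \mathbf{C}_Q / \mathbf{M}_Q \cong \Z^{\V_\star \setminus \{\star_0\}}$.
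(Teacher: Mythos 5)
Your first containment ($\mathbf{C}_Q \subseteq$ Cartier) is correct and actually spells out a subtlety the paper passes over quickly: since every arrow in $F(M')$ lies in one facet component and facet components partition $\V$, any $\pi\in\Pi(F(M'))$ with starred endpoints lies in a single component, hence (by \cref{l:canonicalP}) both endpoints are the \emph{same} starred vertex, so $\pi$ is closed and the telescoping sum vanishes. Your third step (exactness and surjectivity of \eqref{e:PicMap}) is also right and matches the paper.

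The gap is in the reverse containment. You reduce it to the claim that "the cycle space of $Q$ is generated over $\Z$ by cycles supported in single facet components of facet arrow-labelings," and propose to establish this by (i) exhibiting facet arrow-labelings whose $F(M)$ contains each "elementary" cycle, and (ii) decomposing arbitrary cycles into elementary ones. Neither (i) nor (ii) is carried out, and both require non-trivial work. For (i), it is \emph{not} clear that an arbitrary diamond through a sink $\star_j$ can be extended to a facet arrow-labeling of $Q_{\bar P}$ with $-1$ along it: the needed supporting labeling comes from a facet arrow-labeling of $Q_{P_{\max}}$ that was built into the \emph{definition} of $\bar P$ (\cref{d:canonical-extension}), and that labeling supports one specific unoriented path between the preimages $\star_j^{(1)},\star_j^{(2)}$ — not an arbitrary diamond through $\star_j$. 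For (ii), stating that "longer cycles decompose into elementary ones" is essentially equivalent to what you are trying to prove; absent an explicit argument this is circular. The paper's proof avoids all of this by constructing $\ell$ concretely in two stages: first on $\V_\bullet\cup\{\star_0\}$ using a single explicit "rank" facet arrow-labeling $L(\vv)=-\rank(\vv)$, whose $\star_0$-component contains every arrow not pointing to a sink $\star$-vertex, and then extending to each $\star_j$ via an incoming arrow and invoking exactly the facet arrow-labelings of $Q_{P_{\max}}$ that witness the equivalence in the definition of $\bar P$. This direct construction is what you would need to substitute for the unproved cycle-space claim.
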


\begin{rem}\label{r:CartierPicP} We can also give a direct `vertex' description of the group of Cartier divisors.  Namely the map in \eqref{e:CQaltdef} induces an isomorphism
\[
\Z^{\mathcal V\setminus\{\star_0\}}\overset\sim\longrightarrow \mathbf C_Q
\]
sending $(\ell_\vv)$ to the arrow-labeling $M\in\mathbf C_Q$ with  $M(a):=\ell_{h(a)}$ if $t(a)=\star_0$, and  $M(a)=\ell_{h(a)}-\ell_{t(a)}$ otherwise. This corresponds to normalising $\ell_{\star_0}$ to $0$ in \cref{lem:Vindependent-sum} to get an isomorphism. The subgroup $\mathbf M_Q$ is then identified with $
 \Z^{\V_\bullet}\times \prod_{\V_\star\setminus\{\star_0\}}\{0\}$, the sublattice of $\Z^{\mathcal V\setminus\{\star_0\}}$ where all starred vertex coordinates $\ell_{\star_j}=0$. The exact sequence~\eqref{e:Picexactseq} becomes simply
\[
0\longrightarrow \Z^{\V_\bullet}\times \prod_{\V_\star\setminus\{\star_0\}}\{0\}\longrightarrow \Z^{\V\setminus\{\star_0\}}\longrightarrow \Z^{\V_\star\setminus\{\star_0\}}\longrightarrow 0.
\] 
Therefore from the quiver $Q_{\bar P}$ we can read off:
\begin{itemize}
	\item generators for the group of torus-invariant Weil divisors (arrows) of $Y(\F_Q)$,
\item generators for the group of torus-invariant Cartier divisors  (vertices not equal to $\star_0$),
\item generators for the Picard group (starred vertices of $Q_{\bar P}$ not equal to $\star_0$).
\end{itemize} 
The generator of the Picard group associated to a starred vertex $\star_j$ is represented by the Cartier divisor $D_j$ given by $D_j=\sum_{a\to\star_j}D_a$,, with the sum being over all arrows $a$ with target $\star_j$. The Cartier divisor associated to a normal vertex $\vv$ is in terms of Weil divisors given by the sum of the incoming arrows minus the sum of the outgoing arrows for $\vv$.  Directly in terms of the poset $\bar P$ we also have: 
\begin{itemize}
\item generators for the group of Weil divisors (covering relations in $\bar P$),
\item generators for the group of Cartier divisors  (elements of $\bar P\setminus\{\,\hat 0\,\}$),
\item generators for the Picard group (maximal elements of $\bar P$).
\end{itemize} 
\end{rem}\label{r:extensionchoice}
\begin{rem} Note that we can define $\mathbf C_Q$ for any quiver $Q$ (possibly as the image of the map \eqref{e:CQaltdef} if $Q$ is not strongly connected), and the sequence \eqref{e:Picexactseq} still makes sense. However, $\mathbf C_Q$ would not recover the group of Cartier divisors of $Y(\mathcal F_Q)$ in general. Even for $Q$ arising from a ranked poset $P$, two different extensions of $P$ give different $\mathbf C_Q$ and different sets of starred vertices, while the variety $Y(\mathcal F_Q)$ only depended on the original poset $P$. Thus, the specific construction of the canonical extension $\bar P$ of a ranked poset $P$ is important for this result. 
\end{rem}

\begin{rem}\label{r:PicvsCl} 
If we consider $Q_{\max}:=Q_{P_{\max}}$ for the finite ranked poset $P$ the analogue of the construction from \cref{t:canonicalPCartier} gives us the class group for the same toric variety $Y(\F_Q)$. Namely, the group of torus-invariant Weil divisors is identified with $\Z^{\Arr(Q_{\max})}$ and we have the exact sequence
\[
0\longrightarrow\mathbf M_{Q_{\max}}\longrightarrow \Z^{\Arr(Q_{\max})} \longrightarrow \operatorname{Cl}(Y(\F_Q))\longrightarrow 0.
\]
The class group as the cokernel of the map $\mathbf M_{Q_{\max}}\longrightarrow \Z^{\Arr(Q_{\max})}$ 
	is seen to be isomorphic to the group $\Z^d$, where $d$ is the number of 
	maximal elements in $P_{\max}$, equivalently in $P$.
	Thus all in all we have that generators of the class group of $Y(\F_Q)$ are in bijection with maximal elements of $P_{\max}$, and generators of the Picard group of $Y(\F_Q)$ with maximal elements of the canonical extension $\bar P$. 
\end{rem}

\begin{proof}[Proof of \cref{t:canonicalPCartier}] 
Suppose $(c_a)\in\mathbf C_Q$. Using \cref{lem:Vindependent-sum}, choose an $(\ell_{\vv})_\vv\in\Z^\V$ mapping to $(c_a)$ under \eqref{e:CQaltdef}.    From this presentation of $(c_a)_{a\in\Arr(Q)}$ it follows that $\sum c_aD_a$ is a Cartier divisor in $Y(\F_Q)$, as a direct consequence of the characterisation of Cartier divisors in \cref{prop:Cartier}. We now prove that all Cartier divisors lie in $\mathbf C_Q$.

Suppose $\sum_{a\in\Arr} c_aD_a$ is a fixed Cartier divisor for $Y(\mathcal F_Q)$. Pick an unoriented path $\pi$ in $Q_{\bar P}$ as in \cref{l:Qrelations} with $S=\Arr(Q)$. Suppose first that the path only involves vertices from $\V_\bullet\cup\{\star_0\}$. We construct a facet arrow-labeling for $Q_{\bar P}$ as follows. Define a vertex labeling $L$ by $L(\vv)=-\rank(\vv)$ if $\vv\in\V_\bullet$, and $L(\vv)=0$ if $\vv\in\V_\star$. The associated arrow-labeling $M$ labels all arrows not pointing to a $\star$-vertex by $-1$, while an arrow pointing to a vertex $\star_j$ has label $M(a)=\rank(\star_j)-1$. This is clearly a facet arrow-labeling. The facet component of $\star_{0}$ contains all of the arrows that point to normal vertices and, in particular, entirely contains the path $\pi$. It follows from the characterisation of Cartier divisors in \cref{prop:Cartier} that for any such path $\pi$ the relation 
\begin{equation}\label{e:bulletrel}
\sum_{a\in\pi}\epsilon(a) c_{a}=0
\end{equation}
holds. Given $\vv\in\V_\bullet$, pick an oriented path $\pi_\vv$ from $\star_0$ to $\vv$ and define $\ell_\vv:=\sum_{a\in\pi_\vv}c_a$. Also set $\ell_{\star_0}=0$. It follows from \eqref{e:bulletrel} that the element $(\ell_\vv)\in\Z^{\V_{\bullet}\cup\{\star_0\}}$ is well-defined independently of the paths chosen. Moreover, it determines $c_a$ for any arrow $a$ not pointing to a starred vertex by $c_a=\ell_{h(a)}-\ell_{t(a)}$. 

Consider a sink vertex $\star_j$ and pick an arrow pointing to it, 
\[
\bullet_{m^{(1)}_j}\overset {a^{(1)}}\longrightarrow \star_j.
\]
Set $\ell_{\star_j}=\ell_{m^{(1)}_j}+c_{a^{(1)}}$. We now check that  if there is another arrow $a^{(2)}$ ending in $\star_j$ the analogous quantity $\ell_{m^{(2)}_j}+c_{a^{(2)}}$ agrees, so that $\ell_{\star_j}$ is well-defined.

In  the related quiver $Q_{P_{\max}}$  these arrows point to different starred vertices, that we shall call $\star_j^{(1)}$ and $\star_j^{(2)}$. By definition of $\bar P$ there exists a facet arrow-labeling $M$ of $Q_{P_{\max}}$ and an unoriented path $\pi_j=(a_1,\dotsc, a_k)$ from $\star_j^{(1)}$ to $\star_j^{(2)}$ for which all arrows are labeled by $-1$. We may suppose $a_1=a^{(1)}$ and $a_k=a^{(2)}$, and we have $\varepsilon(a_1)=-1$ and $\varepsilon(a_k)=1$. Now \cref{prop:Cartier} applies and we can rewrite \eqref{eq:CartierQ}, separating out the first and last summand, to get
\[
c_{a^{(1)}}=c_{a^{(2)}}+\sum_{i=2}^{k-1}\varepsilon(a_i)c_{a_i}. 
\]
The path $(a_2,\dotsc,a_{k-1})$ runs from $m_j^{(1)}$ to $m_j^{(2)}$ and can be assumed to only traverse normal vertices. Therefore we can replace $\sum_{i=2}^{k-1}\varepsilon(a_i)c_{a_i}$ by $\ell_{m_j^{(2)}}-\ell_{m_j^{(1)}}$. It follows that
\[
c_{a^{(1)}}+\ell_{m_j^{(1)}}=c_{a^{(2)}}+\ell_{m_j^{(2)}}.
\]
Therefore we have defined an element $(\ell_\vv)_\vv\in\Z^{\V}$ that maps to $(c_a)_a$. This implies the independent-sum condition and proves that $(c_a)_a$ lies in $\mathbf C_Q$. Thus $\mathbf C_Q$ agrees with the group of Cartier divisors. 

It follows immediately from the definition of $\mathbf C_Q$ that the map \eqref{e:PicMap} is well-defined. The divisors $D_j$ from \cref{r:CartierPicP} map to the standard generators of $\Z^{\V_\star\setminus\{0\}}$, which implies surjectivity. It follows from the definitions that the kernel of \eqref{e:PicMap} is precisely $\mathbf M_Q$. Thus \eqref{e:Picexactseq} is an exact sequence and the rest of the theorem follows.     
\end{proof}

\begin{cor} Let $P$ be a ranked poset with order polytope $\mathcal O(P)$. The Hibi projective toric variety $Y_{\mathcal O(P)}$ associated to the order polytope $\mathcal O(P)$ has a small toric partial desingularization  by a terminal Gorenstein Fano variety whose Picard rank equals the number of maximal elements in the canonical extension $\bar P$ of $P$. 
\end{cor}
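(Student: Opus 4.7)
The plan is to assemble the corollary from three previously established results: the refinement statement of Theorem \ref{thm:refine}, the terminal Gorenstein Fano property from Corollary \ref{cor:terminalGorenstein}, and the Picard rank computation of Theorem \ref{t:canonicalPCartier}. The only subtle point is to make sure that the quiver used for the Picard rank computation and the quiver used for the desingularisation produce the same toric variety.

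First I would let $Q = Q_{\bar P}$ be the starred quiver coming from the canonical extension of $P$. By Remark \ref{r:HibiVsY} we have $\NP(Q_{\bar P}) = \NP(Q_{\hat P})$, so the face fan $\F_Q$ is independent of whether we extend by $\hat P$ or $\bar P$; in particular, $Y(\F_Q)$ depends only on $P$. Since $P$ is ranked (and in particular $\hat P$ is connected with unique min and max), the quiver $Q_{\hat P}$ is strongly-connected, hence so is $Q_{\bar P}$.

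Next I would apply Theorem \ref{thm:refine} with respect to the bounded extension: the face fan $\F_{Q_{\hat P}} = \F_Q$ refines $\NF(\OO(P))$, and crucially the two fans have the same set of rays. The refinement gives a proper toric morphism
\[
Y(\F_Q)\;\longrightarrow\; Y_{\NF(\OO(P))} \;=\; Y_{\OO(P)},
\]
and because no new rays are introduced, no new torus-invariant prime divisors appear, so this morphism is an isomorphism in codimension one, i.e.\ it is small. Combining Theorem \ref{thm:reflexive} with Corollary \ref{cor:terminalGorenstein} applied to the strongly-connected starred quiver $Q$, the variety $Y(\F_Q)$ is a terminal Gorenstein Fano toric variety, so this small morphism is a small toric partial desingularisation by a terminal Gorenstein Fano variety.

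Finally, for the Picard rank I would invoke Theorem \ref{t:canonicalPCartier} applied to $Q = Q_{\bar P}$: it identifies $\Pic(Y(\F_Q))$ with $\Z^{\V_\star\setminus\{\star_0\}}$, whose rank equals the number of sink starred vertices of $Q_{\bar P}$, which by construction of $\bar P$ equals the number of maximal elements of $\bar P$. The main (conceptual, not technical) obstacle is precisely the bookkeeping point that the two extensions $\hat P$ and $\bar P$ yield the same toric variety but play distinct roles: $\hat P$ is used to invoke the refinement of $\NF(\OO(P))$, while $\bar P$ is used to read off the Picard rank; Remark \ref{r:HibiVsY} bridges the two viewpoints.
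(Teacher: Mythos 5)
Your proof is correct and follows essentially the same approach as the paper: it assembles the result from \cref{thm:refine} (refinement with the same rays, hence small), \cref{cor:terminalGorenstein} (terminal Gorenstein Fano), and \cref{t:canonicalPCartier} (Picard rank), using the fact that $\NP(Q_{\hat P})=\NP(Q_{\bar P})$ to bridge the two quivers. The only cosmetic difference is that you start with $Q=Q_{\bar P}$ while the paper starts with $Q=Q_{\hat P}$, but both arguments rely on exactly the same identification of the two face fans.
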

\begin{proof}
The 
partial 
desingularization 
is given by $Y(\mathcal F_Q)$ for the quiver $Q=Q_{\hat P}$. 
Namely the face fan $\mathcal F_Q$ of $\NP(Q)$ refines the normal fan of $\mathcal O(P)$ by \cref{thm:refine}. Since no rays are added, this desingularization is small.
And $Y(\mathcal F_Q)$ is terminal Gorenstein by \cref{cor:terminalGorenstein}. The canonical extension $\bar P$ defines a different quiver $Q_{\bar P}$ but with the same root polytope;
	that is, $\NP(Q_{\hat P})=\NP(Q_{\bar P})$. Therefore the Picard rank of $Y(\mathcal F_Q)$ is given in \cref{t:canonicalPCartier}, and we see that it agrees with the number of maximal elements of $\bar P$.  
\end{proof}
Let $P_D$ denote the polytope associated to an ample divisor $D$ of a projective toric variety, as in \cite{CLS}.
\begin{cor} \label{cor:newW}
	Let $P$ be a finite ranked poset with canonical extension $\bar P$ and  maximal elements of $\bar P$ denoted $\{\star_1,\dotsc,\star_s\}$. For each maximal element $\star_i$ of $\bar P$ we have a generator $[D_i]$ of the Picard group of $Y(\F_Q)$ as in \cref{r:CartierPicP}. 
Consider the quiver Laurent polynomial $S_{Q_{\bar P}}$ with one quantum parameter $q_{i}$ associated to every sink $\star$-vertex $\star_i$. 
Suppose $D=\sum_i r_iD_i$ is ample, then associated polytope $P_D$ 
	is equal to 
	the superpotential polytope $\Gamma_{Q_{\bar P}}^{\mathbf{r}}$, where $\bold r=(r_1,\dots, r_s)$.
\end{cor}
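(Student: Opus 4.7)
The plan is to compute the moment polytope of the projective toric variety associated to the ample Cartier divisor $D := \sum_i r_i D_i$ on $Y(\F_Q)$ and show by direct inspection of inequalities that it coincides with the superpotential polytope $\Gamma_{Q_{\bar P}}^{\mathbf r}$. The starting point is the standard description: for a projective toric variety $Y(\F)$ with ample torus-invariant Cartier divisor written as $\sum_\rho c_\rho D_\rho$ in terms of prime toric divisors, the moment polytope is
\[
P_D=\{m\in M_\R\mid \langle m,u_\rho\rangle\ge -c_\rho \text{ for every ray }\rho\text{ of }\F\}.
\]
In our setting $M_\R=\mathbf M_{Q,\R}$ by \cref{r:principaldiv}, and the integral equivalence $\Psi:\R^{\V_\bullet}\to \mathbf M_{Q,\R}$ of \cref{rem:isomorphism} lets us write $m=(X_1,\dotsc,X_n)$ in vertex coordinates. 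Under this identification $\langle m,u_a\rangle=M_L(a)$ where $L$ is the $\bullet$-labeling $(X_1,\dotsc,X_n)$, by the defining property of $\Psi$.

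Next I would identify the coefficients $c_a$ of $D$ as an element of $\mathbf C_Q$. Using the description of the Picard generators from \cref{r:CartierPicP}, the divisor $D_i=\sum_{a\to\star_i}D_a$ comes from the vertex labeling with $\ell_{\star_i}=1$ and $\ell_\vv=0$ for all other vertices. Therefore $D=\sum_i r_i D_i=\sum_a c_aD_a$ has
\[
c_a=\begin{cases} r_i &\text{if $h(a)=\star_i$ for some $i\ge 1$,}\\ 0 &\text{for all other arrows $a$.}\end{cases}
\]
Substituting into the inequalities $\langle m,u_a\rangle\ge -c_a$ and using the formulas $u_a=e_j-e_i$, $u_a=e_j$, $u_a=-e_i$ from \cref{def:NP}, the defining inequalities of $P_D$ become
\[
X_j-X_i\ge 0 \text{ for } a:\vv_i\to\vv_j,\quad X_j\ge 0 \text{ for } a:\star_0\to\vv_j,\quad r_k-X_i\ge 0 \text{ for } a:\vv_i\to\star_k,\ k\ge 1.
\]

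Finally I would compare these inequalities with the tropicalization of $S_{Q_{\bar P}}$. Taking $\wt(\star_0)=1$ (so $\Trop(\wt(\star_0))=0$) and $\wt(\star_k)=q_k$ with $\Trop(q_k)=r_k$ for $k\ge 1$, the Laurent monomials $x_j/x_i$, $x_j$ and $q_k/x_i$ coming from arrows of $Q_{\bar P}$ tropicalize to $X_j-X_i$, $X_j$ and $r_k-X_i$ respectively. Demanding each of these be nonnegative produces precisely the inequalities above, so $P_D=\Gamma_{Q_{\bar P}}^{\mathbf r}$.

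There is no genuine obstacle once one is careful about bookkeeping: the main subtle point is that $D=\sum_i r_iD_i$ must be taken with the specific arrow-coordinate representative dictated by \cref{r:CartierPicP} (other representatives in the same Picard class would give a translate of $P_D$), and one must trace the identifications $M_\R\cong \mathbf M_{Q,\R}\cong \R^{\V_\bullet}$ consistently so that $\langle m,u_a\rangle=M_L(a)$; after that the verification is a line-by-line match.
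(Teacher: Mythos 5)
Your proof is correct, and in fact the paper states \cref{cor:newW} without supplying a proof, implicitly treating it as an immediate consequence of \cref{t:canonicalPCartier}, \cref{r:CartierPicP}, and the definitions of the superpotential polytope and the quiver Laurent polynomial. Your argument supplies exactly the omitted bookkeeping: you use the standard toric formula $P_D=\{m : \langle m,u_\rho\rangle\ge -c_\rho\}$ (consistent with the paper's inner-normal-fan and polar-dual conventions, since $P_{-K}=\NP(Q)^*$), you correctly read off from \cref{r:CartierPicP} that $D_i=\sum_{a\to\star_i}D_a$ corresponds to the vertex labeling $\ell_{\star_i}=1$, $\ell_\vv=0$ otherwise, and your three families of inequalities match the tropicalizations of the monomials $x_j/x_i$, $x_j=\frac{1}{\wt(\star_0)}x_j$, and $q_k/x_i=\wt(\star_k)/x_i$ from \eqref{e:SQ} with $\wt(\star_0)=1$. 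Your closing observation about the choice of torus-invariant representative of the divisor class is the right caveat to flag: the corollary's claimed equality (rather than equality up to translation) holds precisely because one uses the representatives $D_i$ from \cref{r:CartierPicP}.
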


\begin{remark}
\cite[Proposition 19]{AltmannvanStraten} gives a description of the Cartier divisors and Picard group of the projective toric variety associated to a flow polytope; 
it also gives a concrete description of the Picard group
in the case of \emph{flag quivers} 
\cite[Definition 21]{AltmannvanStraten}.
We note that in the special case when our quiver $Q$ comes from a \textit{graded, planar} poset, its dual quiver $Q^{\vee}$ is a flag quiver.
\end{remark}

We now make use of the resolution of singularities of $Y(\F_Q)$ from \cref{p:desing}. 
\begin{prop}
	\label{p:Hibidesing} 
	Let $P$ be a finite, ranked poset with order polytope $\OO(P)$. The Hibi toric variety $Y_{\OO(P)}$ with fan $\NF(\OO(P))$ has a small resolution of singularities $Y(\widehat\F_Q)\to Y(\F_Q)$. The Picard rank $\rho$ of $Y(\widehat\F_Q)$ is the number of maximal elements in $P$.
\end{prop}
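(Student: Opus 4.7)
The plan is to assemble the proposition from results already established earlier in the paper. Fix $Q = Q_{\hat P}$, the starred quiver from the bounded extension of $P$; by \cref{r:HibiVsY} the root polytope $\NP(Q)$ and hence the fans $\F_Q$ and $\widehat\F_Q$ depend only on $P$, so this choice of extension is harmless. The existence of the small resolution will chain two small toric birational morphisms, and the Picard rank will follow from standard toric exact sequences combined with the class group computation of \cref{r:PicvsCl}.

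For the existence, I would chain together \cref{thm:refine} and \cref{p:desing}. \cref{thm:refine} says that $\F_Q$ refines $\NF(\OO(P))$ while preserving the set of rays, so the induced toric morphism $Y(\F_Q) \to Y_{\OO(P)}$ is projective, birational and contracts no divisors, hence is small. \cref{p:desing} then supplies a further refinement $\widehat\F_Q$, again with the same rays as $\F_Q$, such that $Y(\widehat\F_Q) \to Y(\F_Q)$ is a small crepant toric desingularisation. The composition $Y(\widehat\F_Q) \to Y_{\OO(P)}$ is then the required small resolution of singularities of the Hibi toric variety.

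For the Picard rank, since $Y(\widehat\F_Q)$ is smooth we have $\Pic(Y(\widehat\F_Q)) = \operatorname{Cl}(Y(\widehat\F_Q))$. For any toric variety the class group sits in the standard exact sequence determined by the rays of the fan, so its rank equals the number of rays minus the dimension of the variety. Since $\widehat\F_Q$ and $\F_Q$ share both their rays and their ambient lattice, we deduce $\rank(\operatorname{Cl}(Y(\widehat\F_Q))) = \rank(\operatorname{Cl}(Y(\F_Q)))$, and by \cref{r:PicvsCl} this number is exactly the number of maximal elements of $P$. The proposition is therefore essentially a corollary of the earlier results; no genuine obstacle arises beyond the routine verification that the composition of two small toric morphisms is itself small and projective.
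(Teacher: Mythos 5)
Your proposal is correct and takes essentially the same route as the paper: chain the refinement $\F_Q \to \NF(\OO(P))$ (with identical rays, hence small) from \cref{thm:refine} with the small crepant resolution $Y(\widehat\F_Q)\to Y(\F_Q)$ from \cref{p:desing}, then compute the Picard rank of the smooth $Y(\widehat\F_Q)$ by identifying its Picard group with the class group of $Y(\F_Q)$ (same rays, same lattice) and appealing to \cref{r:PicvsCl}. The only cosmetic difference is that the paper phrases the Picard argument via an isomorphism of Weil divisor lattices rather than a rank count, and works with $Q = Q_{\bar P}$ rather than $Q_{\hat P}$, which by \cref{r:HibiVsY} gives the same face fan.
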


\begin{proof}
Let $Q=Q_{\bar P}$ and consider the fan $\widehat\F_Q$ from \cref{p:desing}. The small desingularisation of $Y_{\OO(P)}$ is the composition of the small desingularisation $Y(\widehat\F_Q)\to Y(\F_Q)$ in \cref{p:desing} and the small partial desingularisation $Y(\F_Q)\to Y(\NF(\OO(P)))$ from \cref{t:canonicalPCartier}. Now $Y(\widehat\F_Q)$ has an isomorphic group of torus-invariant Weil divisors to $Y(\F_Q)$, but since it is smooth this identifies it with the group of torus-invariant Cartier divisors of $Y(\widehat\F_Q)$. Therefore the calculation of the class group of $Y(\F_Q)$ from \cref{r:PicvsCl} computes the Picard group of $Y(\widehat \F_Q)$. 
\end{proof}

\begin{remark} In the \textit{graded} case the construction of a small resolution for a Hibi toric variety is discussed in \cite[Section~2.4]{Miura:CYinHibi}.
\end{remark}

\vskip .2cm

\noindent{\bf Acknowledgements:~}
The first author thanks Tim Magee for helpful discussions about toric geometry. The second author thanks Karola Meszaros and Alejandro Morales for helpful conversations about root polytopes and flow polytopes, and Elana Kalashnikov for helpful discussions about mirror symmetry for toric varieties. 
\vskip .2cm


\vskip .2cm
\noindent {\bf Financial Support:~} The first author is supported by EPSRC grant EP/V002546/1.
The second author was supported by the National Science Foundation under Award No.
 DMS-2152991. 
 Any opinions, findings, and conclusions or recommendations expressed in this material are
those of the author(s) and do not necessarily reflect the views of the National Science
Foundation.

\vskip .2cm

\bibliographystyle{plain}
\bibliography{bibliography}

\end{document}